\newcommand{\As}[1][]{A_s #1}
\newcommand{\Bs}[1][]{B_s #1}
\newcommand{\bdry}[1]{\partial #1}
\newcommand{\bgset}[1]{\big\{#1\big\}}
\newcommand{\dint}{\ds{\int}}
\newcommand{\ds}[1]{\displaystyle #1}
\newcommand{\eps}{\varepsilon}
\newcommand{\F}{{\mathcal F}}
\newcommand{\M}{{\mathcal M}}
\newcommand{\N}{\mathcal N}
\newcommand{\norm}[2][]{\left\|#2\right\|_{#1}}
\renewcommand{\O}{\text{O}}
\renewcommand{\o}{\text{o}}
\newcommand{\PS}[1]{$(\text{PS})_{#1}$}
\newcommand{\QED}{\mbox{\qedhere}}
\newcommand{\R}{\mathbb R}
\newcommand{\restr}[2]{\left.#1\right|_{#2}}
\newcommand{\seq}[1]{\left(#1\right)}
\newcommand{\set}[1]{\left\{#1\right\}}
\newcommand{\wto}{\rightharpoonup}
\newcommand{\Z}{\mathbb Z}
\newenvironment{enumroman}{\begin{enumerate}

}{\end{enumerate}}
\newtheorem{lemma}{Lemma}[section]
\newtheorem{proposition}[lemma]{Proposition}
\newtheorem{theorem}[lemma]{Theorem}
\theoremstyle{definition}
\newtheorem{definition}[lemma]{Definition}
\theoremstyle{remark}
\newtheorem{remark}[lemma]{Remark}
\numberwithin{equation}{section}
\title{\bf Prescribed energy solutions to some scaled problems via a scaled Nehari manifold\thanks{{\em MSC2020:} Primary 58E05, Secondary 35A15, 49J27, 58E07
\newline \indent\; {\em Key Words and Phrases:} scaled problems, solutions with prescribed energy, existence, multiplicity, bifurcation, scaled Nehari manifold, Schr\"{o}dinger--Poisson--Slater equation}}
\author{\bf Kanishka Perera\\
Department of Mathematics\\
Florida Institute of Technology\\
150 W University Blvd, Melbourne, FL 32901-6975, USA\\
\em kperera@fit.edu\\
[\medskipamount]
\bf Kaye Silva\\
Instituto de Matem\'{a}tica e Estat\'{i}stica\\
Universidade Federal de Goi\'{a}s\\
Rua Samambaia, 74001-970 Goi\^{a}nia, GO, Brazil\\
\em kayesilva@ufg.br}
\date{}
\begin{document}

\maketitle

\begin{abstract}
We prove the existence, multiplicity, and bifurcation of solutions with prescribed energy for a broad class of scaled problems by introducing a suitable notion of scaling based Nehari manifold. Applications are given to Schr\"{o}dinger--Poisson--Slater type equations.
\end{abstract}

\begin{center}
	\begin{minipage}{12cm}
		\tableofcontents
	\end{minipage}
\end{center}

\newpage

\section{Introduction}

Consider the Schr\"{o}dinger--Poisson--Slater type equation
\begin{equation} \label{101}
- \Delta u + \left(\frac{1}{4 \pi |x|} \star u^2\right) u = f(u) \quad \text{in } \R^3,
\end{equation}
where $f : \R \to \R$ is a continuous function satisfying a suitable growth condition. This equation is related to the Thomas--Fermi--Dirac--von\;Weizs\"acker model in Density Functional Theory (DFT), where the local nonlinearity $f$ takes the form $f(u) = u^{5/3} - u^{7/3}$ (see, e.g., Lieb \cite{MR629207,MR641371}, Le Bris and Lions \cite{MR2149087}, Lu and Otto \cite{MR3251907}, Frank et al.\! \cite{MR3762278}, and references therein). More general local nonlinearities also arise in DFT and quantum chemistry models such as Kohn-Sham's, where $f$ is the so-called exchange-correlation potential and is not explicitly known (see, e.g., Anantharaman and Canc\`es \cite{MR2569902} and references therein). The purpose of this paper is to study the existence, multiplicity, and bifurcation of solutions with prescribed energy for a broad class of local nonlinearities that include these physical models as special cases.

The natural space to look for solutions to the equation \eqref{101} is the Coulomb-Sobolev space
\[
E(\R^3) = \set{u \in D^{1,2}(\R^3) : \int_{\R^3} \int_{\R^3} \frac{u^2(x)\, u^2(y)}{|x - y|}\, dx\, dy < \infty}
\]
endowed with the norm
\[
\norm{u} = \left[\int_{\R^3} |\nabla u|^2\, dx + \left(\int_{\R^3} \int_{\R^3} \frac{u^2(x)\, u^2(y)}{|x - y|}\, dx\, dy\right)^{1/2}\right]^{1/2}
\]
(see Lions \cite{MR636734} and Ruiz \cite{MR2679375}). We work in the subspace $E_r(\R^3)$ of radial functions in $E(\R^3)$. It was shown in Ruiz \cite[Theorem 1.2]{MR2679375} that $E_r(\R^3)$ is embedded in $L^\rho(\R^3)$ continuously for $\rho \in (18/7,6]$ and compactly for $\rho \in (18/7,6)$, and these ranges are optimal by Mercuri et al. \cite[Theorem 4 and Theorem 5]{MR3568051} (see also Bellazzini et al. \cite{MR3852465}). So a natural growth condition for the local nonlinearity $f$ is
\begin{equation} \label{53}
|f(t)| \le a_1\, |t|^{\sigma - 1} + a_2\, |t|^{\tau - 1} \quad \forall t \in \R
\end{equation}
for some constants $a_1, a_2 > 0$ and $18/7 < \sigma < \tau \le 6$. Under this growth assumption, the associated energy functional
\[
\Phi(u) = \frac{1}{2} \int_{\R^3} |\nabla u|^2\, dx + \frac{1}{16 \pi} \int_{\R^3} \int_{\R^3} \frac{u^2(x)\, u^2(y)}{|x - y|}\, dx\, dy - \int_{\R^3} F(u)\, dx, \quad u \in E_r(\R^3),
\]
where $F(t) = \int_0^t f(\tau)\, d\tau$ is the primitive of $f$, is well-defined, of class $C^1$, and its critical points are the solutions to equation \eqref{101}. Given $c \in \R$, we look for solutions $u \in E_r(\R^3)$ such that $\Phi(u) = c$.

The nonlinear eigenvalue problem
\begin{equation} \label{55}
- \Delta u + \left(\frac{1}{4 \pi |x|} \star u^2\right) u = \lambda\, |u| u \quad \text{in } \R^3
\end{equation}
will play a central role in our results for the equation \eqref{101}. This eigenvalue problem is not only nonlinear, but also nonhomogeneous in the sense that the three terms appearing in it contain different powers of $u$. However, it has the following scale invariance as noted in \cite{MR2902293}. Consider the scaling
\[
u_t(x) = t^2\, u(tx), \quad (x,t) \in \R^3 \times [0,\infty).
\]
If $u$ is a solution of the equation \eqref{55}, then the entire $1$-parameter family of functions $\set{u_t : t \in \R}$ are solutions. So we will refer to it as a scaled eigenvalue problem. As noted in Mercuri and Perera \cite{MePe2}, this eigenvalue problem leads to a natural classification of the local nonlinearity $f$ in equation \eqref{101} in terms of its scaling properties as follows:
\begin{enumroman}
\item $f$ is subscaled if $\ds{\lim_{|t| \to \infty}\, \frac{f(t)}{|t|\, t}} = 0$,
\item $f$ is asymptotically scaled if $\ds{\lim_{|t| \to \infty}\, \frac{f(t)}{|t|\, t}} = \lambda$ for some $\lambda \in \R \setminus \set{0}$,
\item $f$ is superscaled if $\ds{\lim_{|t| \to \infty}\, \frac{f(t)}{|t|\, t}} = \infty$.
\end{enumroman}

We write $f$ in the form $f(t) = \lambda\, |t|\, t + g(t)$, where $g$ is subscaled, superscaled, or a combination of both with opposite signs, and consider the equation
\begin{equation} \label{102}
- \Delta u + \left(\frac{1}{4 \pi |x|} \star u^2\right) u = \lambda\, |u| u + g(u) \quad \text{in } \R^3.
\end{equation}
The associated energy functional is
\begin{multline*}
\Phi_\lambda(u) = \frac{1}{2} \int_{\R^3} |\nabla u|^2\, dx + \frac{1}{16 \pi} \int_{\R^3} \int_{\R^3} \frac{u^2(x)\, u^2(y)}{|x - y|}\, dx\, dy - \frac{\lambda}{3} \int_{\R^3} |u|^3\, dx - \int_{\R^3} G(u)\, dx,\\[7.5pt]
u \in E_r(\R^3),
\end{multline*}
where $G(t) = \int_0^t g(\tau)\, d\tau$. Given $c \in \R$, we are interested in finding pairs $(\lambda,u) \in \R \times E_r(\R^3)$ such that $\Phi_\lambda'(u) = 0$ and $\Phi_\lambda(u) = c$. Let
\begin{equation} \label{200}
I(u) = \frac{1}{2} \int_{\R^3} |\nabla u|^2\, dx + \frac{1}{16 \pi} \int_{\R^3} \int_{\R^3} \frac{u^2(x)\, u^2(y)}{|x - y|}\, dx\, dy, \qquad J(u) = \frac{1}{3} \int_{\R^3} |u|^3\, dx.
\end{equation}
For $u \ne 0$, the equation $\Phi_\lambda(u) = c$ has the unique solution for $\lambda$ given by
\[
\lambda_c(u) = \frac{I(u) - \dint_{\R^3} G(u)\, dx - c}{J(u)}, \quad u \in E_r(\R^3) \setminus \set{0}.
\]
For $c \in \R$, $u \in E_r(\R^3) \setminus \set{0}$ is a critical point of $\Phi_\lambda$ with critical value $c$ if and only if it is a critical point of $\lambda_c$ with critical value $\lambda$ (see Proposition \ref{Proposition 7}). We will use this observation to study solutions of the equation \eqref{102} with energy $c$.

We will use a Nehari type manifold to study critical points of the functional $\lambda_c$. However, the standard Nehari manifold (see, e.g., Szulkin and Weth \cite{MR2768820}) is not suitable for this purpose because of the form of $\lambda_c$. Therefore we introduce a new manifold based on the following scaling property of $I$ and $J$:
\[
I(u_t) = t^3 I(u), \quad J(u_t) = t^3 J(u) \quad \forall u \in E_r(\R^3),\, t \ge 0.
\]
Let $\M = \set{u \in E_r(\R^3) : I(u) = 1}$. Then $E_r(\R^3)$ can be parametrized as $\set{u_t : u \in \M,\, t \ge 0}$ (see \eqref{11}). Set
\[
\varphi_{c,u}(t) = \lambda_c(u_t), \quad u \in \M,\, t > 0
\]
and let
\[
\N_c = \set{u_t : u \in \M,\, t > 0,\, \varphi_{c,u}'(t) = 0}.
\]
We will show that, under suitable assumptions on $g$, $\N_c$ is a $C^1$-Finsler manifold and a natural constraint for $\lambda_c$ (see Lemma \ref{Lemma 8} and Lemma \ref{Lemma 6}). We will refer to it as the scaled Nehari manifold. We will study critical points of $\restr{\lambda_c}{\N_c}$ using a variational framework for scaled functionals recently developed in Mercuri and Perera \cite{MePe2}. In what follows, $\seq{\lambda_k}$ is the sequence of eigenvalues of the eigenvalue problem \eqref{55} given in Theorem \ref{Theorem 7} in the next section.

First we consider the subscaled case $g(t) = |t|^{\sigma - 2}\, t$, where $18/7 < \sigma < 3$, corresponding to the equation
\begin{equation} \label{103}
- \Delta u + \left(\frac{1}{4 \pi |x|} \star u^2\right) u = \lambda\, |u| u + |u|^{\sigma - 2}\, u \quad \text{in } \R^3.
\end{equation}
Here the energy functional is
\[
\Phi_\lambda(u) = I(u) - \lambda\, J(u) - \frac{1}{\sigma} \int_{\R^3} |u|^\sigma\, dx, \quad u \in E_r(\R^3)
\]
and
\[
\lambda_c(u) = \frac{I(u) - \dfrac{1}{\sigma} \dint_{\R^3} |u|^\sigma\, dx - c}{J(u)}, \quad u \in E_r(\R^3) \setminus \set{0}.
\]
We have the following theorem.

\begin{theorem} \label{Theorem 101}
Let $18/7 < \sigma < 3$.
\begin{enumroman}
\item Equation \eqref{103} has no nontrivial solution with energy $c \ge 0$. For each $c \in (- \infty,0)$, $\lambda_c$ has a sequence of critical values $\lambda_{c,k} \nearrow \infty$, and equation \eqref{103} with $\lambda = \lambda_{c,k}$ has a nontrivial solution $u_{c,k}$ with $\Phi_{\lambda_{c,k}}(u_{c,k}) = c$.
\item For each $\lambda \in \R$, equation \eqref{103} has a sequence of nontrivial solutions $\seq{v_{\lambda,k}}$ such that $\Phi_\lambda(v_{\lambda,k}) \nearrow 0$ and $\norm{v_{\lambda,k}} \to 0$, so $(\lambda,0) \in \R \times E_r(\R^3)$ is a bifurcation point for \eqref{103}.
\end{enumroman}
\end{theorem}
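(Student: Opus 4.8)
The plan is to reduce everything to the one–variable fibering analysis along scaling rays. For $u\in\M$ the homogeneities $I(u_t)=t^3$, $J(u_t)=t^3J(u)$ together with $\int_{\R^3}|u_t|^\sigma\,dx=t^{2\sigma-3}\int_{\R^3}|u|^\sigma\,dx$ yield, writing $K(u)=\frac{1}{\sigma}\int_{\R^3}|u|^\sigma\,dx$,
\[\varphi_{c,u}(t)=\lambda_c(u_t)=\frac{1}{J(u)}\Big[1-K(u)\,t^{2\sigma-6}-c\,t^{-3}\Big].\]
Differentiating, $\varphi_{c,u}'(t)=0$ is equivalent to $(6-2\sigma)\,K(u)\,t^{2\sigma-3}=-3c$; since $18/7<\sigma<3$ forces $6-2\sigma>0$ and $2\sigma-3>0$, this has a (unique) root $t(u)>0$ if and only if $c<0$, and in that case $t(u)$ is the unique minimum of $\varphi_{c,u}$. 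The nonexistence in (i) is then immediate: if $u\neq0$ solves \eqref{103} with $\Phi_\lambda(u)=c$, then by Proposition \ref{Proposition 7} it is a critical point of $\lambda_c$, so its derivative in the scaling direction vanishes and $u\in\N_c$; but the computation above shows $\N_c=\emptyset$ for $c\ge0$, so no such $u$ exists.

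For $c<0$ I would parametrize $\N_c=\{u_{t(u)}:u\in\M\}$ (a $C^1$-Finsler manifold by Lemma \ref{Lemma 8}) and study the even $C^1$ functional
\[\Psi_c(u)=\lambda_c\big(u_{t(u)}\big)=\min_{t>0}\lambda_c(u_t)=\frac{1}{J(u)}\Big[1-E\,K(u)^p\,|c|^{1-p}\Big],\qquad p=\frac{3}{2\sigma-3}\in\Big(1,\tfrac{7}{5}\Big),\]
on $\M$, where $E=E(\sigma)>0$; by Lemma \ref{Lemma 6} its critical points yield critical points of $\lambda_c$, hence solutions of \eqref{103}. The key analytic input is the interpolation estimate $K(u)\le C\,J(u)^\gamma$ on $\M$ for some $\gamma\in(0,1)$, valid because $E_r(\R^3)$ embeds into $L^\sigma(\R^3)\cap L^3(\R^3)$; it shows $\Psi_c(u)\to+\infty$ as $J(u)\to0$, so $\Psi_c$ is bounded below, and together with the compactness of these embeddings it gives the \PS{} condition for $\restr{\lambda_c}{\N_c}$. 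I would then set
\[\lambda_{c,k}=\inf_{A\in\mathcal{A}_k}\ \sup_{u\in A}\Psi_c(u),\]
where $\mathcal{A}_k$ is the family of closed symmetric subsets of $\M$ of Krasnoselskii genus at least $k$; by the variational framework of \cite{MePe2} these are critical values. Using $\Psi_c(u)\ge J(u)^{-1}-E\,C^p\,J(u)^{\gamma p-1}|c|^{1-p}$ together with the minimax characterization $\lambda_k=\inf_{A\in\mathcal{A}_k}\sup_{u\in A}J(u)^{-1}$ from Theorem \ref{Theorem 7}, one gets $\lambda_{c,k}\ge\lambda_k-C''\lambda_k^{1-\gamma p}\to\infty$ (the leading term dominates since $\gamma p>0$), so $\lambda_{c,k}\nearrow\infty$; the associated scaled critical points are the solutions $u_{c,k}$.

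For part (ii) I would exploit the monotone dependence on $c$. Since $\partial_c\lambda_c(u_t)=-J(u_t)^{-1}<0$, the functional $\Psi_c$ decreases pointwise in $c$, so each $\lambda_{c,k}$ is continuous and strictly decreasing in $c$; moreover $\Psi_c\nearrow J^{-1}$ as $c\to-\infty$ gives $\lambda_{c,k}\to\lambda_k^-$, while evaluating the minimax on a fixed finite-genus set (on which $K$ is bounded below) gives $\lambda_{c,k}\to-\infty$ as $c\to0^-$. Hence for fixed $\lambda$ and every $k$ with $\lambda_k>\lambda$ there is a unique $c_k<0$ with $\lambda_{c_k,k}=\lambda$, yielding a solution $v_{\lambda,k}$ of \eqref{103} with $\Phi_\lambda(v_{\lambda,k})=c_k$. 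That $c_k\nearrow0$ follows by contradiction: if $c_k\le c_\ast<0$ along a subsequence, then $\lambda=\lambda_{c_k,k}\ge\lambda_{c_\ast,k}\to\infty$, which is impossible. Finally, combining $\Phi_\lambda(v_{\lambda,k})=c_k$ with the scaling-criticality (Pohozaev) relation $3I(v_{\lambda,k})-3\lambda J(v_{\lambda,k})-(2\sigma-3)K(v_{\lambda,k})=0$ gives $K(v_{\lambda,k})=\tfrac{3|c_k|}{2(3-\sigma)}\to0$; the compact embedding then forces $v_{\lambda,k}\to0$ in $L^\sigma\cap L^3$, and testing the equation with $v_{\lambda,k}$ plus Sobolev interpolation upgrades this to $\norm{v_{\lambda,k}}\to0$, so $(\lambda,0)$ is a bifurcation point.

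The step I expect to be the main obstacle is the verification of the \PS{} condition for $\restr{\lambda_c}{\N_c}$ (equivalently for $\Psi_c$ on $\M$): one must rule out loss of compactness as $J(u)\to0$ and convert $L^\sigma\cap L^3$-convergence of a \PS{} sequence into strong convergence in $E_r(\R^3)$, which is exactly where the radial compact embedding and the coercivity estimate $\Psi_c\to+\infty$ as $J\to0$ are decisive. A secondary delicate point is passing, in part (ii), from the energy decay $c_k\to0^-$ to the norm decay $\norm{v_{\lambda,k}}\to0$: the algebraic identities handle this once $K(v_{\lambda,k})\to0$ is known, but ruling out the blow-up alternative $\norm{v_{\lambda,k}}\to\infty$ requires the Sobolev interpolation bound with exponent $\theta=\sigma/(6-\sigma)$.
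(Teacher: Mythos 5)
Your explicit fibering computations are correct and reproduce, for equation \eqref{103}, exactly the objects the paper constructs abstractly: your $\Psi_c$ is the paper's $\widetilde{\Lambda}_c$ (compare your formula with \eqref{42}), your two limits of $\lambda_{c,k}$ are Proposition \ref{Proposition 2}$(i)$, and your part (ii) argument (intersecting $L_\lambda$ with the curves, $c_k\to 0^-$, then the Poho\v{z}aev relation plus testing with $v_{\lambda,k}$ to get $\norm{v_{\lambda,k}}\to 0$) is essentially the paper's proof of Theorem \ref{Theorem 1}$(iii)$. Replacing the cohomological index by the Krasnoselskii genus is harmless for this particular theorem, and your interpolation bound $K(u)\le C J(u)^\gamma$ on $\M$ is a legitimate substitute for the paper's Lemma \ref{Lemma 3}$(i)$.

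There is, however, a genuine gap, and it sits exactly at the point the paper's architecture is designed to circumvent: the scaling $(u,t)\mapsto u_t=t^2u(t\,\cdot)$ is only \emph{continuous} in $t$, not $C^1$, for a general $u\in E_r(\R^3)$. This invalidates two of your steps as written. First, in the nonexistence argument you say a critical point of $\lambda_c$ has vanishing ``derivative in the scaling direction,'' hence lies in $\N_c$; that chain rule is not available. The paper instead places solutions in $\N_c$ via Lemma \ref{Lemma 8}, i.e.\ via the Poho\v{z}aev identity $(H_3)$, which for \eqref{103} is a separate PDE fact (Ianni--Ruiz, requiring the regularity of solutions), not a consequence of criticality of $\lambda_c$. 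Second, and more seriously, you run the minimax for $\Psi_c$ on $\M$ and assert that ``by Lemma \ref{Lemma 6} its critical points yield critical points of $\lambda_c$.'' Lemma \ref{Lemma 6} concerns critical points of $\Lambda_c=\restr{\lambda_c}{\N_c}$ on $\N_c$, not critical points of $\Psi_c$ on $\M$, and the natural bijection $\M\to\N_c$, $u\mapsto u_{t_c(u)}$, is only a homeomorphism (again because the scaling is not $C^1$), so critical points on the two manifolds do not automatically correspond. The paper avoids this by performing the critical point theory on $\N_c$ itself (Proposition \ref{Proposition 3} plus Lemma \ref{Lemma 6}) and using the homeomorphism only to identify minimax \emph{values} in \eqref{31}, which requires nothing more than monotonicity of the index under odd continuous maps. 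Your route can be repaired: a Lagrange multiplier computation for $\Psi_c$ on $\M$, i.e.\ $\Psi_c'(u)=\mu\,\As(u)$, combined with $(H_3)$ applied to the resulting equation $(1-\mu J(u))\As(u)=\Psi_c(u)\Bs(u)+\gamma f(u)$, forces $\mu=0$ and then shows that $u_{t_c(u)}$ solves \eqref{103} with $\lambda=\Psi_c(u)$ and energy $c$; but this computation is the missing crux, and without it (or without relocating the minimax to $\N_c$) the proposal does not actually produce solutions of the equation. Two smaller gaps of the same flavor: the continuity and strict monotonicity of $c\mapsto\lambda_{c,k}$ do not follow from pointwise monotonicity alone (the paper needs the uniform two-sided bounds of Lemma \ref{Lemma 3}$(iii)$ on sublevel sets, via Lemma \ref{Lemma 2}), and the \PS{} condition, which you correctly flag as the main analytic obstacle, is only sketched.
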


Next we consider the subscaled case $g(t) = - |t|^{\sigma - 2}\, t$, where $18/7 < \sigma < 3$, corresponding to the equation
\begin{equation} \label{104}
- \Delta u + \left(\frac{1}{4 \pi |x|} \star u^2\right) u = \lambda\, |u| u - |u|^{\sigma - 2}\, u \quad \text{in } \R^3.
\end{equation}
Here the energy functional is
\[
\Phi_\lambda(u) = I(u) - \lambda\, J(u) + \frac{1}{\sigma} \int_{\R^3} |u|^\sigma\, dx, \quad u \in E_r(\R^3)
\]
and
\[
\lambda_c(u) = \frac{I(u) + \dfrac{1}{\sigma} \dint_{\R^3} |u|^\sigma\, dx - c}{J(u)}, \quad u \in E_r(\R^3) \setminus \set{0}.
\]
We have the following theorem.

\begin{theorem} \label{Theorem 102}
Let $18/7 < \sigma < 3$.
\begin{enumroman}
\item Equation \eqref{104} has no nontrivial solution with energy $c \le 0$. For each $c \in (0,\infty)$, $\lambda_c$ has a sequence of critical values $\lambda_{c,k} \nearrow \infty$, and equation \eqref{104} with $\lambda = \lambda_{c,k}$ has a nontrivial solution $u_{c,k}$ with $\Phi_{\lambda_{c,k}}(u_{c,k}) = c$.
\item For $\lambda \le \lambda_1$, equation \eqref{104} has no nontrivial solution. For each $\lambda > \lambda_k$, equation \eqref{104} has at least $k$ pairs of nontrivial solutions.
\end{enumroman}
\end{theorem}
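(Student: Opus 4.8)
\section*{Proof proposal for Theorem \ref{Theorem 102}}

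The plan is to analyze the two relevant fiber maps produced by the scaling $u_t(x)=t^2u(tx)$, read off nonexistence directly from the sign structure of their derivatives, and then run a genus-based minimax on the scaled Nehari manifold, with compactness supplied by the compact embeddings $E_r(\R^3)\hookrightarrow L^\rho(\R^3)$, $\rho\in(18/7,6)$. First I would record the scaling identities $I(u_t)=t^3I(u)$, $J(u_t)=t^3J(u)$, and $\int_{\R^3}|u_t|^\sigma\,dx=t^{2\sigma-3}\int_{\R^3}|u|^\sigma\,dx$, and write $K(u)=\tfrac1\sigma\int_{\R^3}|u|^\sigma\,dx$. For $u\in\M$ these give
\[
\varphi_{c,u}(t)=\lambda_c(u_t)=\frac{1}{J(u)}\left(1+K(u)\,t^{2\sigma-6}-c\,t^{-3}\right).
\]
Since $18/7<\sigma<3$ one has $-3<2\sigma-6<0<2\sigma-3$, and a direct computation shows $\varphi_{c,u}'(t)=0$ if and only if $t^{2\sigma-3}=3c/[2K(u)(3-\sigma)]$. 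Hence for $c\le0$ there is no positive critical point, so $\N_c=\emptyset$, while for $c>0$ there is a unique one $t(u)>0$ at which $\varphi_{c,u}$ attains its maximum (it runs from $-\infty$ at $0^+$ up to this maximum and down to $1/J(u)$ as $t\to\infty$).

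The nonexistence statements follow at once. By Proposition \ref{Proposition 7} a nontrivial solution of \eqref{104} with energy $c$ is a critical point of $\lambda_c$, so writing it as $w_s$ with $w\in\M$ forces $\varphi_{c,w}'(s)=0$; for $c\le0$ this is impossible, giving part (i) nonexistence. For fixed $\lambda$, a nontrivial solution written as $u_t$ must satisfy $\tfrac{d}{ds}\Phi_\lambda(u_s)\big|_{s=t}=0$, i.e.
\[
3t^2\bigl(1-\lambda J(u)\bigr)+(2\sigma-3)\,t^{2\sigma-4}K(u)=0,
\]
and since $K(u)>0$ and $2\sigma-3>0$ this requires $\lambda J(u)>1$, i.e. $\lambda>1/J(u)\ge\lambda_1$ (using the characterization $\lambda_1=\inf_{\M}1/J$ from Theorem \ref{Theorem 7}); hence no solution exists for $\lambda\le\lambda_1$, giving part (ii) nonexistence.

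For the existence and multiplicity in part (i), I work on $\M$ with the even $C^1$ functional $\hat\lambda_c(u)=\restr{\lambda_c}{\N_c}(u_{t(u)})$, where $\N_c$ is a $C^1$-Finsler natural constraint by Lemmas \ref{Lemma 8} and \ref{Lemma 6}. Substituting the identity $c\,t(u)^{-3}=\tfrac{2(3-\sigma)}{3}K(u)\,t(u)^{2\sigma-6}$ (from $\varphi_{c,u}'(t(u))=0$) yields $\hat\lambda_c(u)=\tfrac{1}{J(u)}\bigl(1+\tfrac{2\sigma-3}{3}K(u)\,t(u)^{2\sigma-6}\bigr)\ge 1/J(u)$. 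Taking $\mathcal A_k$ to be the symmetric compact subsets of $\M$ of genus $\ge k$, I set
\[
\lambda_{c,k}=\inf_{A\in\mathcal A_k}\;\sup_{u\in A}\hat\lambda_c(u)\ge\inf_{A\in\mathcal A_k}\;\sup_{u\in A}\frac{1}{J(u)}=\lambda_k\nearrow\infty,
\]
using the minimax characterization of $\seq{\lambda_k}$. The standard deformation argument on the Finsler manifold $\N_c$, once the Palais--Smale condition is in hand, makes each $\lambda_{c,k}$ a critical value of $\hat\lambda_c$, hence of $\restr{\lambda_c}{\N_c}$, yielding $u_{c,k}$ solving \eqref{104} with $\lambda=\lambda_{c,k}$ and $\Phi_{\lambda_{c,k}}(u_{c,k})=c$. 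For part (ii) I fix $\lambda>\lambda_k$ and set $\M_\lambda=\set{u\in\M:\lambda J(u)>1}$; for such $u$ the map $t\mapsto\Phi_\lambda(u_t)=t^3(1-\lambda J(u))+t^{2\sigma-3}K(u)$ has a unique maximizer $t^*(u)$, defining a homeomorphism onto a Nehari set $\mathcal N_\lambda$ on which $\Phi_\lambda=\tfrac{2(3-\sigma)}{3}K(u)\,t^*(u)^{2\sigma-3}>0$. Since $\lambda>\lambda_k$, the definition of $\lambda_k$ furnishes a genus-$k$ set on which $1/J<\lambda$, so $\gamma(\M_\lambda)\ge k$; the genus minimax $\beta_j=\inf_{\gamma(A)\ge j}\sup_A\restr{\Phi_\lambda}{\mathcal N_\lambda}$ for $j=1,\dots,k$ then produces $k$ critical values and, by evenness, at least $k$ pairs of solutions.

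The main obstacle I expect is the Palais--Smale/compactness verification. Boundedness of minimizing and minimax sequences comes from the constraint $I(u)=1$, which controls $\norm{u}$, and the subcritical local terms are recovered through the compact embeddings. The delicate points are that near the boundary $\partial\M_\lambda=\set{\lambda J=1}$ one has $t^*(u)\to\infty$ and $\restr{\Phi_\lambda}{\mathcal N_\lambda}\to\infty$, which both confines finite-level Palais--Smale sequences to the interior and must be made quantitative, and that the weak-limit analysis has to be carried out with the quartic Coulomb term present; I would assemble these within the scaled variational framework of \cite{MePe2}.
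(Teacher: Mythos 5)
Your fiber-map computations are correct and your part (i) existence skeleton follows the paper's own route (scaled Nehari manifold, projection onto $\M_s$, symmetric minimax, PS via compact embeddings), but there is a genuine gap at the foundation of both of your nonexistence arguments: you treat it as automatic that a nontrivial solution $u=w_s$ of \eqref{104} satisfies the fiber-critical equation $\varphi_{c,w}'(s)=0$ (and, in part (ii), that $\restr{\tfrac{d}{ds}\Phi_\lambda(u_s)}{s=t}=0$). This would follow from the chain rule if $t\mapsto w_t$ were a $C^1$ path in $E_r(\R^3)$, but it is not: differentiating $t^2w(tx)$ in $t$ formally produces $2t\,w(tx)+t^2\,x\cdot\nabla w(tx)$, and $x\cdot\nabla w$ need not belong to $E_r(\R^3)$ when $w$ does. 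So Fr\'echet criticality of $\lambda_c$ (or of $\Phi_\lambda$) does not ``force'' criticality along the scaling fibers. This is exactly why the paper introduces the Poho\v{z}aev-type hypothesis $(H_3)$, derives Lemma \ref{Lemma 8} from it, and verifies $(H_3)$ for the Schr\"odinger--Poisson--Slater setting in its final lemma via the Ianni--Ruiz Poho\v{z}aev identity, which is a PDE argument about solutions, not a consequence of abstract criticality. Your proposal never invokes $(H_3)$ or any Poho\v{z}aev identity, so the claims ``no nontrivial solution with energy $c\le 0$'' and ``no nontrivial solution for $\lambda\le\lambda_1$'' are unproved as written. The first is repairable cheaply: your computation shows $\N_c=\emptyset$ for $c\le 0$, and Lemma \ref{Lemma 8} (which you cite elsewhere) puts every solution of energy $c$ into $\N_c$. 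The second needs either the Poho\v{z}aev identity applied directly, or the paper's route: any nontrivial solution has some energy $c$, necessarily $c>0$, hence lies in $\N_c$, whence $\lambda=\lambda_c(u)\ge\lambda_{c,1}>\lambda_1$ using the monotonicity and limits of the first curve.

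Separately, your part (ii) multiplicity argument takes a genuinely different route from the paper, and it is the more expensive one. The paper gets the $k$ pairs for $\lambda>\lambda_k$ by intersecting the line $L_\lambda$ with the energy curves $C_j$, $j=1,\dots,k$: Propositions \ref{Proposition 1} and \ref{Proposition 2} give continuity, strict monotonicity, and the limits $\lambda_{c,j}\to\infty$ as $c\to 0^+$ and $\lambda_{c,j}\to\lambda_j$ as $c\to\infty$, so there exist $c_j$ with $\lambda_{c_j,j}=\lambda$, and each intersection yields a solution with energy $c_j$ -- no new variational machinery is needed. Your fixed-$\lambda$ manifold $\N_\lambda=\set{u_{t^*(u)}:u\in\M_\lambda}$ with a genus minimax is a legitimate alternative, but it obliges you to build from scratch what the paper gets for free: that solutions lie on $\N_\lambda$ and that $\N_\lambda$ is a natural constraint (Lemma \ref{Lemma 6} covers only the sets $\N_c$, not $\N_\lambda$, and both facts again rest on the Poho\v{z}aev identity), plus the PS condition and the blow-up analysis near $\bdry{\M_\lambda}$ that you flag but do not carry out. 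Finally, note an index mismatch: the paper's $\lambda_k$ in \eqref{5} is defined via the $\Z_2$-cohomological index, while your minimax classes use genus; since genus dominates the index, the sets and inequalities you need do go the right way, but your identification of $\inf_{A}\sup_A 1/J$ over genus classes with $\lambda_k$ should be replaced by the one-sided comparison, which suffices for both parts.
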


Next we consider the superscaled case $g(t) = |t|^{\tau - 2}\, t$, where $3 < \tau < 6$, corresponding to the equation
\begin{equation} \label{105}
- \Delta u + \left(\frac{1}{4 \pi |x|} \star u^2\right) u = \lambda\, |u| u + |u|^{\tau - 2}\, u \quad \text{in } \R^3.
\end{equation}
Here the energy functional is
\[
\Phi_\lambda(u) = I(u) - \lambda\, J(u) - \frac{1}{\tau} \int_{\R^3} |u|^\tau\, dx, \quad u \in E_r(\R^3)
\]
and
\[
\lambda_c(u) = \frac{I(u) - \dfrac{1}{\tau} \dint_{\R^3} |u|^\tau\, dx - c}{J(u)}, \quad u \in E_r(\R^3) \setminus \set{0}.
\]
We have the following theorem.

\begin{theorem} \label{Theorem 103}
Let $3 < \tau < 6$.
\begin{enumroman}
\item Equation \eqref{105} has no nontrivial solution with energy $c \le 0$. For each $c \in (0,\infty)$, $\lambda_c$ has a sequence of critical values $\lambda_{c,k} \nearrow \infty$, and equation \eqref{105} with $\lambda = \lambda_{c,k}$ has a nontrivial solution $u_{c,k}$ with $\Phi_{\lambda_{c,k}}(u_{c,k}) = c$.
\item For each $\lambda \in \R$, equation \eqref{105} has a sequence of nontrivial solutions $\seq{v_{\lambda,k}}$ such that $\Phi_\lambda(v_{\lambda,k}) \nearrow \infty$ and $\norm{v_{\lambda,k}} \to \infty$, so $(\lambda,\infty) \in \R \times E_r(\R^3)$ is a bifurcation point for \eqref{105}.
\end{enumroman}
\end{theorem}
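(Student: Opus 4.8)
The plan is to reduce both parts to the fibering analysis of $\varphi_{c,u}$ together with the abstract $\Z_2$-index minimax for scaled functionals, and then transfer the conclusions back to $\Phi_\lambda$ via Lemma \ref{Lemma 6} and Proposition \ref{Proposition 7}. First I would record the explicit fibering map: using $I(u_t)=t^3$, $J(u_t)=t^3 J(u)$ and $\int_{\R^3}|u_t|^\tau\,dx=t^{2\tau-3}\int_{\R^3}|u|^\tau\,dx$ for $u\in\M$, one gets
\[
\varphi_{c,u}(t)=\frac{1}{J(u)}\Big(1-\tfrac1\tau\,t^{2\tau-6}\!\int_{\R^3}\!|u|^\tau\,dx-c\,t^{-3}\Big),\qquad t>0.
\]
Since $2\tau-6>0$, $\varphi_{c,u}$ tends to $-\infty$ at both ends of $(0,\infty)$, and $\varphi_{c,u}'(t)=0$ has a unique positive root precisely when $c>0$; this root is a strict maximum, so $\N_c$ is the graph $\set{u_{t^*(u)}:u\in\M}$ over $\M$ and Lemma \ref{Lemma 8} applies. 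For the non-existence when $c\le0$ I would differentiate $t\mapsto\Phi_\lambda(u_t)=t^3\big(I(u)-\lambda J(u)\big)-t^{2\tau-3}K(u)$ at $t=1$, where $K(u)=\tfrac1\tau\int_{\R^3}|u|^\tau\,dx$; since $u$ is a critical point this derivative vanishes, giving $3\big(I(u)-\lambda J(u)\big)=(2\tau-3)K(u)$, and together with $\Phi_\lambda(u)=c$ this yields $2(\tau-3)\,K(u)=3c$. Hence any nontrivial solution forces $c>0$, and moreover $\int_{\R^3}|u|^\tau\,dx=\tfrac{3\tau}{2(\tau-3)}\,c$.

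Next, for $c>0$ I would verify that the even functional $\restr{\lambda_c}{\N_c}$ satisfies (PS); this is the compactness heart of part (i) and uses the compact embedding $E_r(\R^3)\hookrightarrow L^\rho(\R^3)$ for $\rho\in(18/7,6)$, available since $3<\tau<6$. The $\Z_2$-cohomological-index (or genus) minimax of the abstract framework then produces critical values $\lambda_{c,k}=\inf\sup\restr{\lambda_c}{\N_c}$ over symmetric sets of index $\ge k$, each of which is, by Lemma \ref{Lemma 6} and Proposition \ref{Proposition 7}, realized by a solution $u_{c,k}$ of \eqref{105} with $\lambda=\lambda_{c,k}$ and $\Phi_{\lambda_{c,k}}(u_{c,k})=c$. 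Substituting the maximizing $t^*(u)$ gives the closed form
\[
\lambda_c\big(u_{t^*(u)}\big)=\frac{1}{J(u)}\Big(1-C_\tau\Big(\int_{\R^3}|u|^\tau\,dx\Big)^{3/(2\tau-3)}c^{(2\tau-6)/(2\tau-3)}\Big),
\]
with $C_\tau>0$ explicit. Because $\int_{\R^3}|u|^\tau\,dx$ is bounded on $\M$ (as $I(u)=1$ bounds $\norm{u}$), this correction is lower order relative to $1/J(u)$, so on high-index sets the values track the base eigenvalues $\lambda_k$ of Theorem \ref{Theorem 7}; hence $\lambda_{c,k}\nearrow\infty$, completing (i).

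For part (ii) I would fix $\lambda\in\R$ and exploit the closed form on the fixed manifold $\M$. Pulling back via $u\mapsto u_{t^*(u)}$ shows $c\mapsto\restr{\lambda_c}{\N_c}$ is, uniformly on $\M$, continuous and strictly decreasing, so each $\lambda_{c,k}$ is continuous and nonincreasing in $c$, with $\lambda_{c,k}\to\lambda_k$ as $c\to0^+$ and $\lambda_{c,k}\to-\infty$ as $c\to\infty$ (the latter by testing on a fixed finite-dimensional symmetric slice of $\M$, on which $\int_{\R^3}|u|^\tau\,dx$ is bounded below). Thus for every $k$ with $\lambda_k>\lambda$ there is a unique $c_k>0$ with $\lambda_{c_k,k}=\lambda$, and $v_{\lambda,k}:=u_{c_k,k}$ solves \eqref{105}. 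Monotonicity in $c$ and $\lambda_{c,k}\nearrow\infty$ in $k$ for each fixed $c$ force $c_k\to\infty$: if $c_k\le M$ along a subsequence, then $\lambda=\lambda_{c_k,k}\ge\lambda_{M,k}\to\infty$, a contradiction. Finally the identity $\int_{\R^3}|v_{\lambda,k}|^\tau\,dx=\tfrac{3\tau}{2(\tau-3)}\,c_k\to\infty$ gives $\norm{v_{\lambda,k}}\to\infty$, while $\Phi_\lambda(v_{\lambda,k})=c_k\nearrow\infty$, exhibiting $(\lambda,\infty)$ as a bifurcation point.

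The main obstacle I anticipate is concentrated in the passage $\lambda_{c,k}\nearrow\infty$ and its interaction with part (ii): for large $c$ the bracket in the closed form changes sign, so the comparison with the base eigenvalues is not pointwise and must be organized through the index filtration, by showing that every high-index symmetric set meets the region where both $J(u)$ and $\int_{\R^3}|u|^\tau\,dx$ are small. Once this monotone family of nonlinear eigenvalues is under control, the intermediate-value and squeeze argument pinning $c_k$ and driving $c_k\to\infty$ is comparatively routine; verifying (PS) for $\restr{\lambda_c}{\N_c}$ uniformly enough to run the minimax as $c$ varies is the remaining technical point.
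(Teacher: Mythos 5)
Your architecture is essentially the paper's: the fibering map $\varphi_{c,u}$, the scaled Nehari manifold $\N_c$ realized as a graph over $\M$, the $\Z_2$-index minimax on $\N_c$ transferred back through Lemma \ref{Lemma 6} and Proposition \ref{Proposition 7}, the continuous decreasing curves $c \mapsto \lambda_{c,k}$ with limits $\lambda_k$ (as $c \to 0^+$) and $-\infty$ (as $c \to \infty$), and the intersection argument with $c_k \to \infty$ (this is Theorem \ref{Theorem 3} specialized via Section \ref{Section 3}). However, two steps are genuinely gapped. First, your nonexistence argument for $c \le 0$ differentiates $t \mapsto \Phi_\lambda(u_t)$ at $t = 1$ and asserts the derivative vanishes because $u$ is a critical point. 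That chain-rule step requires the curve $t \mapsto u_t = t^2 u(t\,\cdot)$ to be differentiable as a map into $E_r(\R^3)$; formally $\restr{\frac{d}{dt}}{t=1} u_t = 2u + x \cdot \nabla u$, which need not belong to $E_r(\R^3)$ for a general finite-energy $u$, nor even for a solution without first establishing regularity. This is precisely why the paper isolates $(H_3)$ as a hypothesis and verifies it for \eqref{105} not by scaling but by combining the equation tested against $u$ with the Ianni--Ruiz Poho\v{z}aev identity \eqref{63} (see \cite[Proposition 2.5]{MR2902293}), which is proved by elliptic regularity and integration by parts. Your identity $2(\tau - 3) K(u) = 3c$ is correct, but as written it is unproved; the nonexistence claim needs the genuine Poho\v{z}aev identity. (For the solutions you construct the identity is free, since membership in $\N_{c_k}$ gives it directly via \eqref{23}; only the nonexistence half needs Poho\v{z}aev.)

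Second, the divergence $\lambda_{c,k} \nearrow \infty$ in $k$ --- which you correctly identify as the crux, since it also drives $c_k \to \infty$ in part (ii) --- is not established by your comparison with $\lambda_k$: as you concede, for large $c$ the bracket in your closed form changes sign, and your proposed repair (``every high-index symmetric set meets the region where $J$ and $\int_{\R^3} |u|^\tau\, dx$ are small'') is left entirely open. The paper needs no such comparison: once $\Lambda_c = \restr{\lambda_c}{\N_c}$ is shown to be even, bounded below, coercive (Lemma \ref{Lemma 9}), and to satisfy the \PS{} condition on the complete symmetric $C^1$-Finsler manifold $\N_c$ --- and note that the \PS{} proof itself requires $(H_5)$ to control the Lagrange multipliers, a point your sketch omits along with boundedness from below --- the unboundedness of the index minimax values is the standard conclusion of even-functional critical point theory (Proposition \ref{Proposition 3}, i.e.\ \cite[Proposition 3.52]{MR2640827}). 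So your ``main obstacle'' dissolves if you route the divergence through the abstract minimax theorem rather than through a comparison with $\lambda_k$; the comparison via \eqref{44} is only needed to compute the limit $\lambda_{c,k} \to \lambda_k$ as $c \to 0^+$, where it does hold because the correction term is then uniformly small on the relevant sublevel sets.
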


Next we consider the superscaled case $g(t) = - |t|^{\tau - 2}\, t$, where $3 < \tau < 6$, corresponding to the equation
\begin{equation} \label{106}
- \Delta u + \left(\frac{1}{4 \pi |x|} \star u^2\right) u = \lambda\, |u| u - |u|^{\tau - 2}\, u \quad \text{in } \R^3.
\end{equation}
Here the energy functional is
\[
\Phi_\lambda(u) = I(u) - \lambda\, J(u) + \frac{1}{\tau} \int_{\R^3} |u|^\tau\, dx, \quad u \in E_r(\R^3)
\]
and
\[
\lambda_c(u) = \frac{I(u) + \dfrac{1}{\tau} \dint_{\R^3} |u|^\tau\, dx - c}{J(u)}, \quad u \in E_r(\R^3) \setminus \set{0}.
\]
We have the following theorem.

\begin{theorem} \label{Theorem 104}
Let $3 < \tau < 6$.
\begin{enumroman}
\item Equation \eqref{106} has no nontrivial solution with energy $c \ge 0$. For each $c \in (- \infty,0)$, $\lambda_c$ has a sequence of critical values $\lambda_{c,k} \nearrow \infty$, and equation \eqref{106} with $\lambda = \lambda_{c,k}$ has a nontrivial solution $u_{c,k}$ with $\Phi_{\lambda_{c,k}}(u_{c,k}) = c$.
\item For $\lambda \le \lambda_1$, equation \eqref{106} has no nontrivial solution. For each $\lambda > \lambda_k$, equation \eqref{106} has at least $k$ pairs of nontrivial solutions.
\end{enumroman}
\end{theorem}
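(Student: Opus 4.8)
The plan is to read both nonexistence statements off the scaling identity that every critical point satisfies, and to obtain existence and multiplicity from the explicit fibering of the scaled functionals combined with a $\Z_2$-cohomological index minimax. First I would dispose of nonexistence. If $u \ne 0$ solves \eqref{106} with $\Phi_\lambda(u) = c$, then differentiating the scaling relation $\Phi_\lambda(u_t) = t^3\seq{I(u) - \lambda J(u)} + t^{2\tau - 3} K(u)$, where $K(u) = \frac{1}{\tau} \int_{\R^3} |u|^\tau\, dx$, and evaluating at $t = 1$ gives $3\seq{I(u) - \lambda J(u)} + (2\tau - 3)\, K(u) = 0$. Hence $I(u) - \lambda J(u) = - \frac{2\tau - 3}{3}\, K(u)$, so $c = \Phi_\lambda(u) = - \frac{2(\tau - 3)}{3}\, K(u)$; since $\tau > 3$ and $K(u) > 0$, this forces $c < 0$, which is the nonexistence claim for $c \ge 0$ in (i). The same identity gives $\lambda = I(u)/J(u) + \frac{2\tau - 3}{3}\, K(u)/J(u) > I(u)/J(u) \ge \lambda_1$, where $\lambda_1 = \inf_{u \ne 0} I(u)/J(u)$ is the first eigenvalue from Theorem \ref{Theorem 7}; thus no nontrivial solution exists for $\lambda \le \lambda_1$, which is the nonexistence half of (ii).

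For the existence part of (i), fix $c < 0$ and parametrize $E_r(\R^3) \setminus \set{0}$ by $\set{u_t : u \in \M,\, t > 0}$. For $u \in \M$ one computes $\varphi_{c,u}(t) = \seq{1 + K(u)\, t^{2\tau - 6} - c\, t^{-3}}/J(u)$, which tends to $+ \infty$ as $t \to 0^+$ and as $t \to \infty$ (here both $- c > 0$ and $2\tau - 6 > 0$ are used) and whose derivative vanishes at the single point determined by $(2\tau - 6)\, K(u)\, t^{2\tau - 3} = - 3c$. So $\varphi_{c,u}$ has a unique interior critical point, a strict global minimum; by Lemma \ref{Lemma 8} and Lemma \ref{Lemma 6} this makes $\N_c$ a $C^1$-Finsler manifold that is a natural constraint for $\lambda_c$. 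The minimum value defines an even functional $\Psi_c$ on $\M$ whose critical points pull back to solutions of \eqref{106} at energy $c$. I would then set $\lambda_{c,k} = \inf\bgset{\sup_A \Psi_c : A \subseteq \M,\ A = - A,\ i(A) \ge k}$, with $i$ the $\Z_2$-cohomological index. Since $\Psi_c(u) \to 1/J(u)$ as $c \to 0^-$ and the index minimax values of $1/J$ are precisely the eigenvalues $\lambda_k \nearrow \infty$, the same coercivity in the index sense gives $\lambda_{c,k} \nearrow \infty$, and the deformation lemma (once compactness is established) produces the critical points $u_{c,k}$.

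The multiplicity in (ii) I would obtain from the dual fibering of $\Phi_\lambda$ itself. For $u \in \M$, the map $t \mapsto \Phi_\lambda(u_t) = t^3\seq{1 - \lambda J(u)} + t^{2\tau - 3} K(u)$ has an interior critical point — necessarily a minimum with negative critical value — exactly when $1 - \lambda J(u) < 0$, that is, on the symmetric open set $\set{u \in \M : 1/J(u) < \lambda}$. By the index characterization of the eigenvalues, this set has $i \ge k$ whenever $\lambda > \lambda_k$, so minimizing the resulting reduced functional over symmetric subsets of index $1, \dots, k$ produces $k$ pairs $\pm\, u$ of critical points, hence $k$ pairs of solutions of \eqref{106}.

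The main obstacle will be compactness: verifying the relevant \PS{} condition for the constrained functionals $\restr{\lambda_c}{\N_c}$ and for the reduced functional on $\M$. Working in the radial space $E_r(\R^3)$, the embeddings into $L^\rho(\R^3)$ are compact for $\rho \in (18/7, 6)$, which covers both the cubic term $J$ and the term $K$ since $3, \tau \in (18/7, 6)$; the remaining task is to upgrade weak convergence of a bounded Palais--Smale sequence to strong convergence in the Coulomb--Sobolev norm, controlling the Dirichlet energy and the nonlocal quartic term, which are only weakly lower semicontinuous. Boundedness of such sequences and the nondegeneracy needed for $\N_c$ to be a manifold and a natural constraint follow from the fibering analysis above together with the framework of Mercuri and Perera.
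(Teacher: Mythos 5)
The central gap is your derivation of the Poho\v{z}aev-type identity $3\,(I(u) - \lambda J(u)) + (2\tau - 3)\, K(u) = 0$, on which everything else in your proposal rests: the nonexistence for $c \ge 0$, the bound $\lambda > \lambda_1$, the fact that solutions lie in the Nehari set, and the natural-constraint step implicit in your part $(ii)$ argument. You obtain it by ``differentiating the scaling relation and evaluating at $t = 1$'', i.e., by the chain rule $\frac{d}{dt}\big|_{t=1} \Phi_\lambda(u_t) = \Phi_\lambda'(u)\, \frac{d}{dt}\big|_{t=1} u_t = 0$. But for the scaling $u_t(x) = t^2 u(tx)$ the map $t \mapsto u_t$ is \emph{not} differentiable into $E_r(\R^3)$: the formal derivative at $t = 1$ is $2u + x \cdot \nabla u$, which need not belong to $E_r(\R^3)$ for $u \in E_r(\R^3)$. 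So knowing $\Phi_\lambda'(u) = 0$ does not by itself give $\frac{d}{dt}\big|_{t=1} \Phi_\lambda(u_t) = 0$; the smoothness of the explicit function $t \mapsto t^3 (I - \lambda J) + t^{2\tau-3} K$ is beside the point. This is precisely why the paper isolates the identity as hypothesis $(H_3)$ (noting in a Remark that fiber differentiation would suffice \emph{if} $t \mapsto u_t$ were $C^1$, which is not available here) and verifies it for \eqref{106} instead by combining the equation tested against $u$ with the Poho\v{z}aev identity of Ianni--Ruiz \cite{MR2902293}, which requires elliptic regularity and integration by parts, including for the nonlocal term. Your argument is repaired by inserting this verification; as written, both nonexistence claims and the passage from constrained to free critical points are unjustified.

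Apart from this, your part $(i)$ follows essentially the paper's route (unique fiber minimum $t_c(u)$, reduced even functional on $\M$, cohomological-index minimax), though your justification that $\lambda_{c,k} \nearrow \infty$ conflates the limit $c \to 0^-$ with unboundedness in $k$; the clean argument is that for $c < 0$ both extra terms in $\varphi_{c,u}(t_c(u))$ are positive, so $\widetilde{\Lambda}_c \ge \widetilde{\Psi}$ pointwise and $\lambda_{c,k} \ge \lambda_k \to \infty$, while the fact that these are critical values is Proposition \ref{Proposition 3} and needs the \PS{} condition you defer. For the multiplicity in part $(ii)$ you take a genuinely different route from the paper: direct fibering of $\Phi_\lambda$ over the set $\set{u \in \M_s : 1/J_s(u) < \lambda}$, whose index is $\ge k$ when $\lambda > \lambda_k$, versus the paper's argument intersecting the line $L_\lambda$ with the curves $c \mapsto \lambda_{c,k}$ using their continuity, monotonicity, and limits ($\lambda_k$ as $c \to 0^-$, $\infty$ as $c \to -\infty$). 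Your route is the classical Nehari method and can be made to work, but note that (a) its natural-constraint step again requires $(H_3)$ rather than mere nondegeneracy of the fiber minimum, since the Szulkin--Weth tangent-vector argument is unavailable for this scaling, and (b) to guarantee $k$ \emph{distinct} pairs you must also treat coinciding minimax levels via the standard index-of-the-critical-set argument.
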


Next we consider the mixed sub-superscaled case $g(t) = |t|^{\sigma - 2}\, t - |t|^{\tau - 2}\, t$, where $18/7 < \sigma < 3 < \tau < 6$, corresponding to the equation
\begin{equation} \label{107}
- \Delta u + \left(\frac{1}{4 \pi |x|} \star u^2\right) u = \lambda\, |u| u + |u|^{\sigma - 2}\, u - |u|^{\tau - 2}\, u \quad \text{in } \R^3.
\end{equation}
Here the energy functional is
\[
\Phi_\lambda(u) = I(u) - \lambda\, J(u) - \frac{1}{\sigma} \int_{\R^3} |u|^\sigma\, dx + \frac{1}{\tau} \int_{\R^3} |u|^\tau\, dx, \quad u \in E_r(\R^3)
\]
and
\[
\lambda_c(u) = \frac{I(u) - \dfrac{1}{\sigma} \dint_{\R^3} |u|^\sigma\, dx + \dfrac{1}{\tau} \dint_{\R^3} |u|^\tau\, dx - c}{J(u)}, \quad u \in E_r(\R^3) \setminus \set{0}.
\]
We have the following theorem.

\begin{theorem} \label{Theorem 105}
Let $18/7 < \sigma < 3 < \tau < 6$.
\begin{enumroman}
\item Equation \eqref{107} has no nontrivial solution with energy $c \ge 0$. For each $c \in (- \infty,0)$, $\lambda_c$ has a sequence of critical values $\lambda_{c,k} \nearrow \infty$, and equation \eqref{107} with $\lambda = \lambda_{c,k}$ has a nontrivial solution $u_{c,k}$ with $\Phi_{\lambda_{c,k}}(u_{c,k}) = c$.
\item For each $\lambda \in \R$, equation \eqref{107} has a sequence of nontrivial solutions $\seq{v_{\lambda,k}}$ such that $\Phi_\lambda(v_{\lambda,k}) \nearrow 0$ and $\norm{v_{\lambda,k}} \to 0$, so $(\lambda,0) \in \R \times E_r(\R^3)$ is a bifurcation point for \eqref{107}.
\end{enumroman}
\end{theorem}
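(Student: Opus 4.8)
The plan is to transfer the problem to the auxiliary function $\lambda_c$ by Proposition \ref{Proposition 7} and to analyze it on the scaled Nehari manifold $\N_c$. Writing a general function as $u_t$ with $u \in \M$ (so that $I(u) = 1$) and using $I(u_t) = t^3 I(u)$, $J(u_t) = t^3 J(u)$ together with $\int_{\R^3} |u_t|^\rho\, dx = t^{2\rho - 3} \int_{\R^3} |u|^\rho\, dx$, the fibering map takes the form
\[
\varphi_{c,u}(t) = \frac{1}{J(u)}\left[1 - \frac{t^{2\sigma-6}}{\sigma}\int_{\R^3}|u|^\sigma\,dx + \frac{t^{2\tau-6}}{\tau}\int_{\R^3}|u|^\tau\,dx - c\,t^{-3}\right],
\]
where $2\sigma - 6 \in (-6/7,0)$, $2\tau - 6 \in (0,6)$ and $-3 < 2\sigma - 6$.

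For the nonexistence statement I would use the scaling (Pohozaev) identity: if $u$ is a nontrivial solution of \eqref{107} with $\Phi_\lambda(u) = c$, then differentiating $\Phi_\lambda(u_t)$ at $t=1$ gives $I(u) - \lambda J(u) = \frac{2\sigma-3}{3\sigma}\int_{\R^3}|u|^\sigma\,dx - \frac{2\tau-3}{3\tau}\int_{\R^3}|u|^\tau\,dx$, and substituting this into $\Phi_\lambda(u) = c$ yields
\[
c = \frac{2(\sigma-3)}{3\sigma}\int_{\R^3}|u|^\sigma\,dx + \frac{2(3-\tau)}{3\tau}\int_{\R^3}|u|^\tau\,dx.
\]
Since $\sigma < 3 < \tau$, both coefficients are negative while both integrals are positive, so $c < 0$; this proves the nonexistence for $c \ge 0$. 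For $c < 0$ one has $-c > 0$, so $\varphi_{c,u}(t) \to +\infty$ both as $t \to 0^+$ (where the term $-c\,t^{-3}$ dominates the subscaled term, since $-3 < 2\sigma - 6$) and as $t \to \infty$ (where the superscaled power $t^{2\tau-6}$ dominates). Clearing $t^4$ from $\varphi_{c,u}'(t) = 0$ reduces it to
\[
\frac{6-2\sigma}{\sigma}\,t^{2\sigma-3}\int_{\R^3}|u|^\sigma\,dx + \frac{2\tau-6}{\tau}\,t^{2\tau-3}\int_{\R^3}|u|^\tau\,dx + 3c = 0,
\]
whose left-hand side increases strictly from $3c < 0$ to $+\infty$; hence $\varphi_{c,u}$ has a unique, nondegenerate global minimum $t(u)$. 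This verifies the hypotheses of Lemma \ref{Lemma 8} and Lemma \ref{Lemma 6}, so $\N_c$ is a $C^1$ Finsler manifold and a natural constraint, and $u \mapsto \min_{t} \varphi_{c,u}(t)$ is an even $C^1$ function on $\M$. Because $\sigma, 3, \tau \in (18/7, 6)$, the embeddings of $E_r(\R^3)$ into $L^\sigma$, $L^3$ and $L^\tau$ are compact, so $\restr{\lambda_c}{\N_c}$ satisfies the Palais--Smale condition. An inf--sup over the symmetric subsets of $\M$ of cohomological index at least $k$, in the framework of \cite{MePe2}, then yields (using that $\M$ has infinite cohomological index) critical values $\lambda_{c,k}$ for every $k$; these increase to $+\infty$ provided every sublevel set $\set{\restr{\lambda_c}{\N_c} \le M}$ has finite cohomological index, and Proposition \ref{Proposition 7} converts each $\lambda_{c,k}$ into a solution $u_{c,k}$ of \eqref{107} with energy $c$.

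For part (ii) I would fix $\lambda$ and work with $\psi_{\lambda,u}(t) = \Phi_\lambda(u_t)$, namely
\[
\psi_{\lambda,u}(t) = t^3\seq{1 - \lambda J(u)} - \frac{1}{\sigma}\,t^{2\sigma-3}\int_{\R^3}|u|^\sigma\,dx + \frac{1}{\tau}\,t^{2\tau-3}\int_{\R^3}|u|^\tau\,dx.
\]
Since $2\sigma - 3 < 3 < 2\tau - 3$, near $t = 0$ this agrees to leading order, $\psi_{\lambda,u}(t) \sim -\frac{1}{\sigma}\int_{\R^3}|u_t|^\sigma\,dx$, with the fibering map of the pure subscaled equation \eqref{103}, independently of $\lambda$ and of the superscaled term. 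Thus $\psi_{\lambda,u}$ has a first local minimum $t_-(u) > 0$ with negative critical value, and the bifurcation analysis is identical to that of Theorem \ref{Theorem 101}: an inf--sup with the cohomological index over $\set{u_{t_-(u)} : u \in \M}$ produces solutions $v_{\lambda,k}$ of \eqref{107}. On symmetric sets of high index Theorem \ref{Theorem 7} forces $1/J(u)$ large (so $1 - \lambda J(u) > 0$ once $k$ exceeds the index at which $\lambda_k > \lambda$) and $\int_{\R^3}|u|^\sigma\,dx$ small; formally balancing $t^3(1 - \lambda J(u))$ against $\frac{1}{\sigma}t^{2\sigma-3}\int_{\R^3}|u|^\sigma\,dx$ gives $t_-(u)^{6-2\sigma} \asymp \int_{\R^3}|u|^\sigma\,dx \to 0$, so $t_-(u) \to 0$. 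Hence $\norm{v_{\lambda,k}} \to 0$ and $\Phi_\lambda(v_{\lambda,k}) \nearrow 0$, which is the asserted bifurcation at $(\lambda,0)$.

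The main obstacle in both parts is the quantitative control of the minimax levels through the cohomological index. In (i) the divergence $\lambda_{c,k} \nearrow \infty$ is equivalent to every sublevel set of $\restr{\lambda_c}{\N_c}$ having finite index, and in (ii) I must show, uniformly over the index-$k$ symmetric sets, that $t_-(u) \to 0$ and the corresponding energies tend to $0^-$ while the index stays at least $k$, so that infinitely many geometrically distinct small solutions survive. Both reduce to index estimates for sub- and superlevel sets of $J$ and of $u \mapsto \int_{\R^3}|u|^\sigma\,dx$ on $\M$, anchored to the eigenvalue asymptotics $\lambda_k \to \infty$ of Theorem \ref{Theorem 7}. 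The remaining ingredients --- the scaling identity, the unique nondegenerate fiber minimum, and the Palais--Smale condition via the compact embeddings --- are routine; the superscaled term serves only to force $\varphi_{c,u} \to +\infty$ as $t \to \infty$ in (i) and is negligible near the origin in (ii).
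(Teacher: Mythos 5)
Your part $(i)$ is essentially the paper's argument. The nonexistence for $c \ge 0$ via the scaling/Poho\v{z}aev identity is exactly Lemma \ref{Lemma 8} combined with \eqref{23}: for $F > 0$, $G < 0$ (which is case $(v)$ of $(H_4)$, with $s=3$, $q = 2\sigma-3$, $r = 2\tau-3$), every term of $(s-q)F(u) - (r-s)G(u) + cs$ is positive when $c \ge 0$, so $\N_c = \emptyset$; your direct computation of $c$ is the same identity. Likewise the unique fiber critical point $t_c(u)$, the natural-constraint property, and the index minimax \eqref{50} are the paper's construction. However, two steps you leave conditional are not free and are precisely what the paper's lemmas supply: the Palais--Smale condition does \emph{not} follow from compact embeddings alone, since one must first show PS sequences are bounded, which is the content of the uniform coercivity Lemma \ref{Lemma 9} (a genuine argument using the scaling decomposition $u = \widetilde{u}_{\widetilde{t}}$ and weak closedness of $\N_c$); and the divergence $\lambda_{c,k} \nearrow \infty$ is Proposition \ref{Proposition 3} (PS plus standard index theory), not an unverified hypothesis ``provided every sublevel set has finite cohomological index.''

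The genuine gap is part $(ii)$. You abandon $\lambda_c$ and propose a direct fiber analysis of $\Phi_\lambda$, minimaxing over $\set{u_{t_-(u)} : u \in \M_s}$; but then the entire bifurcation conclusion hinges on showing that the resulting critical values tend to $0^-$ and the critical points tend to $0$ in norm, and you explicitly defer this to ``index estimates for sub- and superlevel sets of $J$ and of $u \mapsto \int_{\R^3}|u|^\sigma\,dx$,'' which you do not prove. These do not follow from Theorem \ref{Theorem 7}: a symmetric set of high index must contain points where $J$ is small, but that gives no smallness of $\int_{\R^3}|u|^\sigma\,dx$ at the points realizing the sup, so your ``formal balancing'' $t_-(u)^{6-2\sigma} \asymp \int_{\R^3}|u|^\sigma\,dx \to 0$ is unsupported. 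The paper circumvents exactly this: it proves the curves $C_k = \set{(\lambda_{c,k},c)}$ are continuous and decreasing in $c$ (Proposition \ref{Proposition 1}) with $\lambda_{c,k} \to -\infty$ as $c \to 0^-$ and $\lambda_{c,k} \to \infty$ as $c \to -\infty$ (Proposition \ref{Proposition 2} via Lemma \ref{Lemma 4}), so each $C_k$ meets $L_\lambda$ at some $c_k$; a monotonicity argument (using $\lambda_{c,k} \nearrow \infty$ in $k$ and monotonicity in $c$) yields $c_k \nearrow 0$; and then the constraint \eqref{23} itself, since $(s-q)F(v_{\lambda,k})$ and $-(r-s)G(v_{\lambda,k})$ are both positive and sum to $-c_k s \to 0$, forces $F(v_{\lambda,k}) \to 0$ and $G(v_{\lambda,k}) \to 0$, whence the weak limit is $0$ and testing the equation together with $(A_7)$, $(A_9)$, $(H_2)$ upgrades this to $\norm{v_{\lambda,k}} \to 0$ (boundedness coming again from Lemma \ref{Lemma 9}). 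Without either this curve-intersection mechanism or a proof of your index estimates, the heart of part $(ii)$ remains unproved.
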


Finally we consider the mixed sub-superscaled case $g(t) = - |t|^{\sigma - 2}\, t + |t|^{\tau - 2}\, t$, where $18/7 < \sigma < 3 < \tau < 6$, corresponding to the equation
\begin{equation} \label{108}
- \Delta u + \left(\frac{1}{4 \pi |x|} \star u^2\right) u = \lambda\, |u| u - |u|^{\sigma - 2}\, u +|u|^{\tau - 2}\, u \quad \text{in } \R^3.
\end{equation}
Here the energy functional is
\[
\Phi_\lambda(u) = I(u) - \lambda\, J(u) + \frac{1}{\sigma} \int_{\R^3} |u|^\sigma\, dx - \frac{1}{\tau} \int_{\R^3} |u|^\tau\, dx, \quad u \in E_r(\R^3)
\]
and
\[
\lambda_c(u) = \frac{I(u) + \dfrac{1}{\sigma} \dint_{\R^3} |u|^\sigma\, dx - \dfrac{1}{\tau} \dint_{\R^3} |u|^\tau\, dx - c}{J(u)}, \quad u \in E_r(\R^3) \setminus \set{0}.
\]
We have the following theorem.

\begin{theorem} \label{Theorem 106}
Let $18/7 < \sigma < 3 < \tau < 6$.
\begin{enumroman}
\item Equation \eqref{108} has no nontrivial solution with energy $c \le 0$. For each $c \in (0,\infty)$, $\lambda_c$ has a sequence of critical values $\lambda_{c,k} \nearrow \infty$, and equation \eqref{108} with $\lambda = \lambda_{c,k}$ has a nontrivial solution $u_{c,k}$ with $\Phi_{\lambda_{c,k}}(u_{c,k}) = c$.
\item For each $\lambda \in \R$, equation \eqref{108} has a sequence of nontrivial solutions $\seq{v_{\lambda,k}}$ such that $\Phi_\lambda(v_{\lambda,k}) \nearrow \infty$ and $\norm{v_{\lambda,k}} \to \infty$, so $(\lambda,\infty) \in \R \times E_r(\R^3)$ is a bifurcation point for \eqref{108}.
\end{enumroman}
\end{theorem}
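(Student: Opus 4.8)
The plan is to analyze equation \eqref{108} entirely through the fibering map $\varphi_{c,u}(t) = \lambda_c(u_t)$ on the scaled Nehari manifold $\N_c$, using that by Proposition \ref{Proposition 7} a nontrivial $u$ solves \eqref{108} with $\Phi_\lambda(u) = c$ exactly when $u$ is a critical point of $\lambda_c$ at level $\lambda$, and that any such critical point lies on $\N_c$. For $u \in \M$, from $I(u) = 1$, $I(u_t) = t^3$, $J(u_t) = t^3 J(u)$ and $\int_{\R^3} |u_t|^\rho\, dx = t^{2\rho - 3} \int_{\R^3} |u|^\rho\, dx$ one gets
\[
\varphi_{c,u}(t) = \frac{1}{J(u)}\left[1 + \frac{t^{2\sigma - 6}}{\sigma} \int_{\R^3} |u|^\sigma\, dx - \frac{t^{2\tau - 6}}{\tau} \int_{\R^3} |u|^\tau\, dx - c\, t^{-3}\right],
\]
where $2\sigma - 6 \in (-6/7,0)$, $2\tau - 6 \in (0,6)$, and $-3 < 2\sigma - 6$.

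The nonexistence claim and the manifold structure come from a single computation: clearing denominators in $\varphi_{c,u}'(t) = 0$ yields
\[
3c\,t^3 = \frac{6-2\sigma}{\sigma}\,t^{2\sigma}\int_{\R^3}|u|^\sigma\,dx + \frac{2\tau-6}{\tau}\,t^{2\tau}\int_{\R^3}|u|^\tau\,dx.
\]
The right-hand side is strictly positive for $t>0$ and $u\ne 0$, so for $c\le 0$ there is no critical $t$, whence no nontrivial solution with energy $c\le 0$. For $c>0$, dividing by $t^3$ exhibits a strictly increasing function of $t$ running from $-3c<0$ to $+\infty$, so $\varphi_{c,u}$ has a unique critical point $t(u)$, which is its global maximum (since $\varphi_{c,u}\to-\infty$ at both ends). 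Thus $\N_c=\set{u_{t(u)}:u\in\M}$ is a graph over $\M$, a $C^1$ Finsler manifold and a natural constraint by Lemma \ref{Lemma 8} and Lemma \ref{Lemma 6}, on which $\lambda_c$ reduces to the even functional $m_c(u):=\varphi_{c,u}(t(u))=\max_{t>0}\varphi_{c,u}(t)$. For the existence in (i) I would apply the index-based minimax of Mercuri and Perera \cite{MePe2} to $m_c$ on $\M$: the compact embeddings $E_r(\R^3)\hookrightarrow L^\rho(\R^3)$ for $\rho\in(18/7,6)$ (both $\sigma,\tau$ lie in this range) give the \PS{} condition for $\restr{\lambda_c}{\N_c}$, and since $m_c$ inherits the blow-up of $1/J$ as $J\to 0$, the values $\lambda_{c,k}=\inf\sup m_c$ over symmetric subsets of $\M$ of $\Z_2$-index $\ge k$ are critical values with $\lambda_{c,k}\nearrow\infty$; each critical point $u_{c,k}\in\N_c$ solves \eqref{108} with $\lambda=\lambda_{c,k}$ and $\Phi_{\lambda_{c,k}}(u_{c,k})=c$.

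For (ii) I would fix $\lambda$ and continue in $c$. Since $\partial\varphi_{c,u}/\partial c=-t^{-3}/J(u)<0$, the envelope theorem makes $m_c(u)$ strictly decreasing in $c$, hence each $\lambda_{c,k}$ is continuous and strictly decreasing in $c$ and nondecreasing in $k$. Analyzing $t(u)$ shows $t(u)\to 0$ as $c\to 0^+$ and $t(u)\to\infty$ as $c\to\infty$, giving $m_c(u)\to+\infty$ and $m_c(u)\to-\infty$ respectively, so $\lambda_{c,k}\to+\infty$ as $c\to 0^+$ and $\lambda_{c,k}\to-\infty$ as $c\to\infty$. By the intermediate value theorem there is $c_k\in(0,\infty)$ with $\lambda_{c_k,k}=\lambda$; set $v_{\lambda,k}:=u_{c_k,k}$, a solution of \eqref{108} at this $\lambda$ with $\Phi_\lambda(v_{\lambda,k})=c_k$. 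Monotonicity in $k$ and in $c$ forces $c_k$ nondecreasing, and if $c_k\to c_\ast<\infty$ then $\lambda=\lambda_{c_k,k}\ge\lambda_{c_\ast,k}\to\infty$, a contradiction, so $c_k\to\infty$ and $\Phi_\lambda(v_{\lambda,k})\nearrow\infty$. Finally, if $\norm{v_{\lambda,k}}$ were bounded, the compact embeddings would force a subsequence converging strongly (using $\Phi_\lambda'(v_{\lambda,k})=0$) to a point of finite energy, contradicting $\Phi_\lambda(v_{\lambda,k})\to\infty$; hence $\norm{v_{\lambda,k}}\to\infty$ and $(\lambda,\infty)$ is a bifurcation point.

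The delicate steps are (a) verifying the \PS{} condition and confirming, within the scaled framework of \cite{MePe2}, that the index-minimax values $\lambda_{c,k}$ genuinely diverge to $+\infty$ rather than being merely well-defined, and (b) obtaining the uniform control in $c$ needed for continuity of $c\mapsto\lambda_{c,k}$ together with the two one-sided limits, since these underpin the intermediate-value continuation in (ii). By contrast, the nonexistence for $c\le 0$ and the graph structure of $\N_c$ are immediate from the sign analysis above.
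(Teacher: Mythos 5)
Your proposal is correct and takes essentially the same approach as the paper: it reconstructs, in the concrete setting, exactly the machinery the paper uses to deduce Theorem \ref{Theorem 106} from Theorem \ref{Theorem 6} and Section \ref{Section 3} — nonexistence for $c\le 0$ from the sign of the Nehari constraint, the scaled Nehari manifold as a graph over $\M$ with index minimax values $\lambda_{c,k}\nearrow\infty$, and part (ii) via continuity, strict monotonicity and the limits of $c\mapsto\lambda_{c,k}$, intermediate-value continuation, and the monotonicity argument forcing $c_k\nearrow\infty$; moreover, the steps you flag as delicate (the \PS{} condition, divergence of $\lambda_{c,k}$, and uniform-in-$c$ control) are precisely the content of the paper's Lemma \ref{Lemma 9}, Proposition \ref{Proposition 3}, Lemma \ref{Lemma 3}, Lemma \ref{Lemma 4}, and Propositions \ref{Proposition 1}--\ref{Proposition 2}. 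The only cosmetic difference is at the end: the paper gets $\norm{v_{\lambda,k}}\to\infty$ directly from the fact that $\Phi_\lambda$ is bounded on bounded sets, rather than via your compactness/strong-convergence argument.
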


In the next section we will prove these theorems in an abstract setting involving scaled operators and scaled eigenvalue problems that was introduced in Mercuri and Perera \cite{MePe2}. We will apply our abstract results to the equation \eqref{102} and deduce Theorems \ref{Theorem 101}--\ref{Theorem 106} in Section \ref{Section 3}.

Our abstract results have other applications. For example, they can be used to establish new results analogous to those in Theorems \ref{Theorem 101}--\ref{Theorem 106} for the semilinear elliptic boundary value problem
\[
\left\{\begin{aligned}
- \Delta u & = \lambda u + \mu\, |u|^{\sigma - 2}\, u + \nu\, |u|^{\tau - 2}\, u && \text{in } \Omega\\[10pt]
u & = 0 && \text{on } \bdry{\Omega},
\end{aligned}\right.
\]
where $\Omega$ is a bounded domain in $\R^N,\, N \ge 2$, $1 < \sigma < 2 < \tau < 2N/(N - 2)$, $\lambda \in \R$ is a parameter, and $\mu \nu \le 0$. Here the scaling is the standard one
\[
H^1_0(\Omega) \times [0,\infty) \to H^1_0(\Omega), \quad (u,t) \mapsto tu.
\]
Other results on prescribed energy solutions to this problem can be found in Ramos Quoirin et al.\! \cite{MR4736027}. We also note that in the work Leite et al. \cite{LeQuSi}, the prescribed energy solutions problem was considered for more general equations under the standard scaling.

As another example, our abstract results can be applied to the nonlocal problem
\[
\left\{\begin{aligned}
(- \Delta)^s\, u & = \lambda u + \mu\, |u|^{\sigma - 2}\, u + \nu\, |u|^{\tau - 2}\, u && \text{in } \Omega\\[10pt]
u & = 0 && \text{in } \R^N \setminus \Omega,
\end{aligned}\right.
\]
where $\Omega$ is a bounded domain in $\R^N$ with Lipschitz boundary, $(- \Delta)^s$ is the fractional Laplacian operator defined on smooth functions by
\[
(- \Delta)^s\, u(x) = 2 \lim_{\eps \searrow 0} \int_{\R^N \setminus B_\eps(x)} \frac{u(x) - u(y)}{|x - y|^{N+2s}}\, dy, \quad x \in \R^N,
\]
$s \in (0,N/2]$, $1 < \sigma < 2 < \tau < 2N/(N - 2s)$, $\lambda \in \R$ is a parameter, and $\mu \nu \le 0$. More generally, we can consider applications to problems involving the $p$-Laplacian or the fractional $p$-Laplacian operators, or superposition operators of mixed fractional order as in Dipierro et al.\! \cite{DiPeSpVa2,MR4736013}.

\section{Abstract results}

In this section we prove our abstract results on prescribed energy solutions of scaled problems. We begin by recalling some results on scaled eigenvalue problems recently proved in Mercuri and Perera \cite{MePe2}. Then we develop our theory of the scaled Nehari manifold for solutions with prescribed energy. Our main existence, multiplicity, and bifurcation results are given at the end of the section.

\subsection{Scaled eigenvalue problems}

Let $W$ be a reflexive Banach space. The following notion of a scaling on $W$ was introduced in Mercuri and Perera \cite{MePe2}.

\begin{definition}[{\cite[Definition 1.1]{MePe2}}]
A scaling on $W$ is a continuous mapping
\[
W \times [0,\infty) \to W, \quad (u,t) \mapsto u_t
\]
satisfying
\begin{enumerate}
\item[$(A_1)$] $(u_{t_1})_{t_2} = u_{t_1 t_2}$ for all $u \in W$ and $t_1, t_2 \ge 0$,
\item[$(A_2)$] $(\tau u)_t = \tau u_t$ for all $u \in W$, $\tau \in \R$, and $t \ge 0$,
\item[$(A_3)$] $u_0 = 0$ and $u_1 = u$ for all $u \in W$,
\item[$(A_4)$] $u_t$ is bounded on bounded sets in $W \times [0,\infty)$,
\item[$(A_5)$] $\exists s > 0$ such that $\norm{u_t} = \O(t^s)$ as $t \to \infty$, uniformly in $u$ on bounded sets.
\end{enumerate}
\end{definition}

Denote by $W^\ast$ the dual of $W$. Recall that $q \in C(W,W^\ast)$ is a potential operator if there is a functional $Q \in C^1(W,\R)$, called a potential for $q$, with Fr\'{e}chet derivative $Q' = q$. By replacing $Q$ with $Q - Q(0)$ if necessary, we may assume that $Q(0) = 0$.

\begin{definition}[{\cite[Definition 2.1]{MePe2}}]
A scaled operator is an odd potential operator $\As \in C(W,W^\ast)$ that maps bounded sets into bounded sets and satisfies
\[
\As(u_t) v_t = t^s \As(u) v \quad \forall u, v \in W,\, t \ge 0.
\]
\end{definition}

Let $\As$ and $\Bs$ be scaled operators satisfying
\begin{enumerate}
\item[$(A_6)$] $\As(u) u > 0$ for all $u \in W \setminus \set{0}$,
\item[$(A_7)$] every sequence $\seq{u_j}$ in $W$ such that $u_j \wto u$ and $\As(u_j)(u_j - u) \to 0$ has a subsequence that converges strongly to $u$,
\item[$(A_8)$] $\Bs(u) u > 0$ for all $u \in W \setminus \set{0}$,
\item[$(A_9)$] if $u_j \wto u$ in $W$, then $\Bs(u_j) \to \Bs(u)$ in $W^\ast$.
\end{enumerate}
The scaled eigenvalue problem
\begin{equation} \label{1}
\As(u) = \lambda \Bs(u) \quad \text{in } W^\ast
\end{equation}
was studied in \cite{MePe2}. We say that $\lambda \in \R$ is an eigenvalue of this problem if there is a $u \in W \setminus \set{0}$, called an eigenfunction associated with $\lambda$, satisfying \eqref{1}. Then $u_t$ is also an eigenfunction associated with $\lambda$ for any $t > 0$ since
\[
\As(u_t) v = \As(u_t) (v_{t^{-1}})_t = t^s \As(u) v_{t^{-1}} = t^s \lambda \Bs(u) v_{t^{-1}} = \lambda \Bs(u_t) (v_{t^{-1}})_t = \lambda \Bs(u_t) v
\]
for all $v \in W$.

The potentials
\begin{equation} \label{2}
I_s(u) = \int_0^1 \As(\tau u) u\, d\tau, \quad J_s(u) = \int_0^1 \Bs(\tau u) u\, d\tau, \quad u \in W
\end{equation}
of $\As$ and $\Bs$, respectively, are even, \hspace{-5pt} bounded \hspace{-5pt} on \hspace{-5pt} bounded sets, and have the scaling property
\begin{equation} \label{3}
I_s(u_t) = t^s I_s(u), \quad J_s(u_t) = t^s J_s(u) \quad \forall u \in W,\, t \ge 0
\end{equation}
(see \cite[Proposition 2.2]{MePe2}). By \eqref{2}, $(A_6)$, and $(A_8)$,
\begin{equation} \label{4}
I_s(u) > 0, \quad J_s(u) > 0 \quad \forall u \in W \setminus \set{0}.
\end{equation}
It follows from $(A_9)$ that if $u_j \wto u$ in $W$, then $J_s(u_j) \to J_s(u)$ (see {\cite[Proposition 2.4]{MePe2}}). We assume that $I_s$ and $J_s$ satisfy
\begin{enumerate}
\item[$(A_{10})$] $I_s$ is coercive, i.e., $I_s(u) \to \infty$ as $\norm{u} \to \infty$,
\item[$(A_{11})$] every solution of problem \eqref{1} satisfies $I_s(u) = \lambda\, J_s(u)$.
\end{enumerate}

We have $I_s'(0) = \As(0) = 0$ since $\As$ is odd, so the origin is a critical point of $I_s$. It is the only critical point of $I_s$ since
\[
I_s'(u) u = \As(u) u > 0 \quad \forall u \in W \setminus \set{0}
\]
by $(A_6)$. So $I_s(0) = 0$ is the only critical value of $I_s$, and hence it follows from the implicit function theorem that
\[
\M_s = \bgset{u \in W : I_s(u) = 1}
\]
is a $C^1$-Finsler manifold. Since $I_s$ is continuous, even, and coercive, $\M_s$ is complete, symmetric, and bounded. Let
\[
\Psi(u) = \frac{1}{J_s(u)}, \quad u \in W \setminus \set{0}
\]
and let $\widetilde{\Psi} = \restr{\Psi}{\M_s}$. Then eigenvalues of problem \eqref{1} coincide with critical values of $\widetilde{\Psi}$ (see \cite[Proposition 2.5]{MePe2}).

Let $\F$ denote the class of symmetric subsets of $\M_s$ and let $i(M)$ denote the $\Z_2$-cohomological index of $M \in \F$ (see Fadell and Rabinowitz \cite{MR0478189}). For $k \ge 1$, let $\F_k = \bgset{M \in \F : i(M) \ge k}$ and set
\begin{equation} \label{5}
\lambda_k := \inf_{M \in \F_k}\, \sup_{u \in M}\, \widetilde{\Psi}(u).
\end{equation}
The following theorem was proved in \cite{MePe2}.

\begin{theorem}[{\cite[Theorem 2.10]{MePe2}}] \label{Theorem 7}
Assume $(A_1)$--$(A_{11})$. Then $\lambda_k \nearrow \infty$ is a sequence of eigenvalues of problem \eqref{1}.
\begin{enumroman}
\item The first eigenvalue is given by
    \[
    \lambda_1 = \min_{u \in \M_s}\, \widetilde{\Psi}(u) > 0.
    \]
\item If $\lambda_k = \dotsb = \lambda_{k+m-1} = \lambda$ and $E_\lambda$ is the set of eigenfunctions associated with $\lambda$ that lie on $\M_s$, then $i(E_\lambda) \ge m$.
\item If $\lambda_k < \lambda < \lambda_{k+1}$, then
    \[
    i(\widetilde{\Psi}^{\lambda_k}) = i(\M_s \setminus \widetilde{\Psi}_\lambda) = i(\widetilde{\Psi}^\lambda) = i(\M_s \setminus \widetilde{\Psi}_{\lambda_{k+1}}) = k,
    \]
    where $\widetilde{\Psi}^a = \bgset{u \in \M_s : \widetilde{\Psi}(u) \le a}$ and $\widetilde{\Psi}_a = \bgset{u \in \M_s : \widetilde{\Psi}(u) \ge a}$ for $a \in \R$.
\end{enumroman}
\end{theorem}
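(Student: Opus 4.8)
\section*{Proof proposal}

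The plan is to deduce the whole statement from the standard $\Z_2$-cohomological index minimax machinery of Fadell--Rabinowitz \cite{MR0478189}, applied to the even $C^1$ functional $\widetilde{\Psi}$ on the complete, symmetric, bounded $C^1$-Finsler manifold $\M_s$, once the Palais--Smale condition has been secured from the compactness hypotheses $(A_7)$ and $(A_9)$. Throughout I would invoke the properties of the index $i$ recorded in \cite{MR0478189}: monotonicity under odd continuous maps, subadditivity $i(M \cup N) \le i(M) + i(N)$, continuity (every symmetric $M$ admits a symmetric neighborhood $N$ with $i(N) = i(M)$), and the normalization that the unit sphere of an $n$-dimensional symmetric subspace has index $n$. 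I would also use the correspondence, already recorded in the excerpt, between eigenvalues of \eqref{1} and critical values of $\widetilde{\Psi}$, so that producing critical values of $\widetilde{\Psi}$ is the same as producing eigenvalues.

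The central analytic step, and what I expect to be the main obstacle, is verifying that $\widetilde{\Psi}$ satisfies (PS) on $\M_s$. Given $(u_j) \subset \M_s$ with $\widetilde{\Psi}(u_j) \to c$ and constrained derivative tending to zero, boundedness of $\M_s$ gives $u_j \wto u$ along a subsequence; weak continuity $(A_9)$ yields $J_s(u_j) \to J_s(u)$, so $J_s(u) = 1/c > 0$ and in particular $u \ne 0$. Writing the Lagrange relation $\Psi'(u_j) - \mu_j I_s'(u_j) \to 0$ in $W^\ast$, where $\Psi'(u_j) = - \Bs(u_j)/J_s(u_j)^2$ and $I_s'(u_j) = \As(u_j)$, I would test against $u_j - u$: the $\Bs$-term vanishes in the limit by $(A_9)$ (strong times weak), and after checking that the multipliers $\mu_j$ stay bounded I obtain $\As(u_j)(u_j - u) \to 0$, whence $(A_7)$ forces $u_j \to u$ strongly. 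Controlling the multipliers $\mu_j$ against the constraint structure is the delicate bookkeeping here; everything downstream is formal index theory. As an immediate byproduct, $\widetilde{\Psi} = 1/J_s \ge 1/\sup_{\M_s} J_s > 0$ because $J_s$ is bounded on the bounded set $\M_s$, which supplies the positivity in part (i).

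With (PS) established, $\lambda_1 = \inf_{\M_s} \widetilde{\Psi}$ is attained by a minimizing (PS) sequence and is positive, giving (i) once one notes that $\F_1$ is exactly the collection of nonempty symmetric subsets (as $0 \notin \M_s$). For general $k$, the standard deformation lemma for $C^1$ functionals satisfying (PS) on a complete Finsler manifold shows that if $[a,b]$ contains no critical value then $\widetilde{\Psi}^b$ deformation retracts equivariantly onto $\widetilde{\Psi}^a$; combined with $\F_{k+1} \subseteq \F_k$ (so the sequence is nondecreasing) the usual argument shows each $\lambda_k$ is a critical value, hence an eigenvalue. For (ii), if $\lambda_k = \dotsb = \lambda_{k+m-1} = \lambda$ but $i(E_\lambda) \le m-1$, I would excise a symmetric neighborhood of $E_\lambda$ of equal index (continuity of $i$), deform $\widetilde{\Psi}^{\lambda + \eps}$ past the critical set into this neighborhood together with $\widetilde{\Psi}^{\lambda - \eps}$, and use subadditivity to produce a set in $\F_{k+m-1}$ on which $\widetilde{\Psi} < \lambda$, contradicting $\lambda_{k+m-1} = \lambda$; hence $i(E_\lambda) \ge m$.

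For (iii), when $\lambda_k < \lambda < \lambda_{k+1}$ there are no critical values in $(\lambda_k, \lambda_{k+1})$, so the deformation lemma renders $\widetilde{\Psi}^{\lambda_k}$, $\M_s \setminus \widetilde{\Psi}_\lambda = \set{\widetilde{\Psi} < \lambda}$, $\widetilde{\Psi}^\lambda$, and $\set{\widetilde{\Psi} < \lambda_{k+1}} = \M_s \setminus \widetilde{\Psi}_{\lambda_{k+1}}$ all equivariantly homotopy equivalent, hence of equal index. That the common value is exactly $k$ is pinned by the minimax definition: $i(\widetilde{\Psi}^{\lambda_k}) \ge k$ since a deformation carries a set of $\F_k$ lying just above level $\lambda_k$ into $\widetilde{\Psi}^{\lambda_k}$, while $i(\widetilde{\Psi}^{\lambda_k}) \ge k+1$ would place $\widetilde{\Psi}^{\lambda_k} \in \F_{k+1}$ at a level $\lambda_k < \lambda_{k+1}$, contradicting the definition of $\lambda_{k+1}$. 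Finally, $\lambda_k \nearrow \infty$: monotonicity is noted above, and if the $\lambda_k$ remained bounded by some $\Lambda$ then each sublevel set $\widetilde{\Psi}^{\Lambda + 1} = \set{u \in \M_s : J_s(u) \ge 1/(\Lambda + 1)}$ would contain symmetric subsets of arbitrarily large index, forcing $i(\widetilde{\Psi}^{\Lambda + 1}) = \infty$. I would rule this out by establishing that every sublevel set $\widetilde{\Psi}^a$ has finite index, the natural route being to combine the weak continuity $(A_9)$ of $J_s$ with the compactness $(A_7)$ to exclude symmetric sets of unbounded index on which $J_s$ stays bounded below. This finiteness, together with the (PS) verification, is where I expect the genuine analytic work to concentrate.
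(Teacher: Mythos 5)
First, the point of comparison: the paper does not prove this theorem at all --- it is imported verbatim, citation and label included, from \cite[Theorem 2.10]{MePe2} (``The following theorem was proved in \cite{MePe2}''). So the only proof to measure your sketch against is the one in that reference, which, like yours, runs through the Palais--Smale condition for $\widetilde{\Psi}=1/J_s$ on $\M_s$ plus the Fadell--Rabinowitz index machinery; your overall strategy is the right one, and your parts (i)--(ii) are the standard arguments. The problem is that the two steps you defer as ``where the genuine analytic work concentrates'' \emph{are} the proof, and as sketched both contain errors. In the (PS) verification, from $[\Psi'(u_j)-\mu_j A_s(u_j)](u_j-u)\to 0$ and the vanishing of the $B_s$-term you get $\mu_j A_s(u_j)(u_j-u)\to 0$, but boundedness of $\mu_j$ (which is what you propose to check) is useless here: if $\mu_j\to 0$ you learn nothing about $A_s(u_j)(u_j-u)$. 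What is needed is $|\mu_j|$ bounded \emph{away from} zero. This can be obtained by testing the Lagrange relation with $u_j$ itself: since $u_j\wto u$ with $J_s(u)=1/c>0$, so $u\ne 0$, hypotheses $(A_8)$--$(A_9)$ give $B_s(u_j)u_j\to B_s(u)u>0$, hence $\Psi'(u_j)u_j\to -c^2\,B_s(u)u<0$, so $\mu_j A_s(u_j)u_j$ has a strictly negative limit; since $0<A_s(u_j)u_j\le C$ by $(A_6)$ and the boundedness of $A_s$ on bounded sets, this forces $|\mu_j|\ge\delta>0$ eventually, after which one divides and applies $(A_7)$.

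Second, the two remaining gaps. For $\lambda_k\nearrow\infty$ you correctly reduce to finiteness of $i(\widetilde{\Psi}^a)$, but you point to $(A_7)$, which is not the relevant hypothesis; the argument needs only reflexivity of $W$ and the weak continuity of $J_s$ (a consequence of $(A_9)$): $\widetilde{\Psi}^a=\{u\in\M_s : J_s(u)\ge 1/a\}$ is bounded, so its weak closure $K$ is weakly compact, and $J_s\ge 1/a$ on $K$, so $0\notin K$; choosing for each $u\in K$ a functional $\ell_u\in W^\ast$ with $\ell_u(u)\ne 0$ and extracting a finite subcover of the weakly open sets $\{v:|\ell_u(v)|>|\ell_u(u)|/2\}$, the odd continuous linear map $v\mapsto(\ell_{u_1}(v),\dots,\ell_{u_n}(v))$ sends $K$ into $\R^n\setminus\{0\}$, whence $i(\widetilde{\Psi}^a)\le i(K)\le n$ by monotonicity. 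Finally, your proof of (iii) rests on the assertion that $(\lambda_k,\lambda_{k+1})$ contains no critical values; this is unjustified and in general cannot be assumed, since the minimax values $\lambda_j$ need not exhaust the critical values of $\widetilde{\Psi}$ (the classical ``variational spectrum'' issue). The two middle equalities in (iii) need no deformation at all: $\lambda_k<\lambda$ yields $M\in\F_k$ with $\sup_M\widetilde{\Psi}<\lambda$, so $i(\M_s\setminus\widetilde{\Psi}_\lambda)\ge k$, while $i(\widetilde{\Psi}^\lambda)\ge k+1$ would put $\widetilde{\Psi}^\lambda\in\F_{k+1}$ and force $\lambda_{k+1}\le\lambda$; monotonicity under inclusion closes the chain. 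The outer equalities $i(\widetilde{\Psi}^{\lambda_k})=k$ and $i(\M_s\setminus\widetilde{\Psi}_{\lambda_{k+1}})=k$ do require (PS)-based deformation or the continuity property of the index near the critical sets at those two levels, but arranged so as not to presuppose a critical-value-free interval.
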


\subsection{Formulation of the problem}

We consider the question of existence and multiplicity of solutions to the nonlinear operator equation
\begin{equation} \label{18}
\As(u) = \lambda \Bs(u) + f(u) + g(u) \quad \text{in } W^\ast,
\end{equation}
where $f, g \in C(W,W^\ast)$ are odd potential operators satisfying
\begin{enumerate}
\item[$(H_1)$] for some $r > s > q > 0$, $f(u_t) v_t = t^q f(u) v$ and $g(u_t) v_t = t^r g(u) v$ for all $u, v \in W,\, t \ge 0$,
\item[$(H_2)$] if $u_j \wto u$ in $W$, then $f(u_j) \to f(u)$ and $g(u_j) \to g(u)$ in $W^\ast$, in particular, $f$ and $g$ map bounded sets into bounded sets.
\end{enumerate}

Solutions of equation \eqref{18} coincide with critical points of the $C^1$-functional
\[
\Phi_\lambda(u) = I_s(u) - \lambda\, J_s(u) - F(u) - G(u), \quad u \in W,
\]
where the potentials
\[
F(u) = \int_0^1 f(\tau u) u\, d\tau, \quad G(u) = \int_0^1 g(\tau u) u\, d\tau
\]
of $f$ and $g$, respectively, are even, bounded on bounded sets, and have the scaling properties
\begin{equation} \label{19}
F(u_t) = t^q\, F(u), \quad G(u_t) = t^r\, G(u) \quad \forall u \in W,\, t \ge 0
\end{equation}
(see \cite[Proposition 2.2]{MePe2}). It follows from $(H_2)$ that if $u_j \wto u$ in $W$, then $F(u_j) \to F(u)$ and $G(u_j) \to G(u)$ (see {\cite[Proposition 2.4]{MePe2}}). We assume that
\begin{enumerate}
\item[$(H_3)$] for any $\alpha, \beta, \gamma, \delta \in \R$, every solution of the equation $\alpha \As(u) = \beta \Bs(u) + \gamma f(u) + \delta g(u)$ satisfies the Poho\v{z}aev type identity
    \[
    s \alpha\, I_s(u) = s \beta\, J_s(u) + q \gamma\, F(u) + r \delta\, G(u),
    \]
\item[$(H_4)$] one of the following six cases holds:
    \begin{enumerate}
    \item[$(i)$] $F(u) > 0$ and $G(u) = 0$ for all $u \in W \setminus \set{0}$,
    \item[$(ii)$] $F(u) < 0$ and $G(u) = 0$ for all $u \in W \setminus \set{0}$,
    \item[$(iii)$] $F(u) = 0$ and $G(u) > 0$ for all $u \in W \setminus \set{0}$,
    \item[$(iv)$] $F(u) = 0$ and $G(u) < 0$ for all $u \in W \setminus \set{0}$,
    \item[$(v)$] $F(u) > 0$ and $G(u) < 0$ for all $u \in W \setminus \set{0}$,
    \item[$(vi)$] $F(u) < 0$ and $G(u) > 0$ for all $u \in W \setminus \set{0}$.
    \end{enumerate}
\end{enumerate}

\begin{remark}
If the mapping $[0,\infty) \to W,\, t \mapsto u_t$ is $C^1$ for each $u \in W$, then $(H_3)$ holds. Indeed, if $u$ is a solution of the equation $\alpha \As(u) = \beta \Bs(u) + \gamma f(u) + \delta g(u)$, then it is a critical point of the functional $\Psi(u) = \alpha\, I_s(u) - \beta\, J_s(u) - \gamma\, F(u) - \delta\, G(u)$, so
\[
\restr{\frac{d}{dt}}{t=1}\! \Psi(u_t) = \Psi'(u_1)\, \restr{\frac{d}{dt}}{t=1}\! u_t = 0
\]
since $u_1 = u$ by $(A_3)$. This reduces to the desired Poho\v{z}aev identity since $\Psi(u_t) = \alpha t^s I_s(u) - \beta t^s J_s(u) - \gamma t^q\, F(u) - \delta t^r\, G(u)$ by \eqref{3} and \eqref{19}.
\end{remark}

Given $c \in \R$, we are interested in finding pairs $(\lambda,u) \in \R \times W$ such that $\Phi_\lambda(u) = c$ and $\Phi_\lambda'(u) = 0$ (see Ramos Quoirin et al.\! \cite{MR4736027} and the references therein). For $u \ne 0$, the equation $\Phi_\lambda(u) = c$ has the unique solution for $\lambda$ given by
\begin{equation} \label{20}
\lambda_c(u) = \frac{I_s(u) - F(u) - G(u) - c}{J_s(u)}, \quad u \in W \setminus \set{0}
\end{equation}
(see \eqref{4}). The following proposition establishes the relationship between critical points of $\Phi_\lambda$ and those of $\lambda_c$.

\begin{proposition} \label{Proposition 7}
For $c \in \R$, $u \in W \setminus \set{0}$ is a critical point of $\Phi_\lambda$ with critical value $c$ if and only if it is a critical point of $\lambda_c$ with critical value $\lambda$.
\end{proposition}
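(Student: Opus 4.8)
The plan is to compute the derivative of $\lambda_c$ directly from its definition \eqref{20} and relate its vanishing to the equation $\Phi_\lambda'(u) = 0$ evaluated at the particular $\lambda$ for which $\Phi_\lambda(u) = c$.

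First I would fix $c \in \R$ and $u \in W \setminus \set{0}$. Since $I_s, F, G$ are $C^1$ (being potentials of continuous operators) and $J_s$ is $C^1$ with $J_s(u) > 0$ by \eqref{4}, the quotient $\lambda_c$ is $C^1$ on $W \setminus \set{0}$. Writing $N(u) = I_s(u) - F(u) - G(u) - c$ for the numerator so that $\lambda_c(u) = N(u)/J_s(u)$, the quotient rule gives, for all $v \in W$,
\[
\lambda_c'(u)\, v = \frac{N'(u)\, v}{J_s(u)} - \frac{N(u)\, J_s'(u)\, v}{J_s(u)^2}.
\]
The key observation is that $N'(u) = I_s'(u) - F'(u) - G'(u) = \As(u) - f(u) - g(u)$ and $J_s'(u) = \Bs(u)$, while $N(u)/J_s(u) = \lambda_c(u)$. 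Substituting and multiplying through by $J_s(u)$,
\[
J_s(u)\, \lambda_c'(u)\, v = \bigl[\As(u) - f(u) - g(u)\bigr] v - \lambda_c(u)\, \Bs(u)\, v.
\]
The bracketed expression on the right is exactly $\Phi_\mu'(u)\, v$ evaluated at $\mu = \lambda_c(u)$, since $\Phi_\mu'(u) = \As(u) - \mu\, \Bs(u) - f(u) - g(u)$. Therefore, because $J_s(u) > 0$, we obtain the identity
\[
\lambda_c'(u) = \frac{1}{J_s(u)}\, \Phi_{\lambda_c(u)}'(u),
\]
an equality of elements of $W^\ast$.

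From this single identity both directions follow at once. Suppose $u$ is a critical point of $\lambda_c$ with critical value $\lambda$, i.e.\ $\lambda_c(u) = \lambda$ and $\lambda_c'(u) = 0$. Then the displayed identity forces $\Phi_\lambda'(u) = 0$, and $\lambda_c(u) = \lambda$ is by the very definition of $\lambda_c$ equivalent to $\Phi_\lambda(u) = c$; hence $u$ is a critical point of $\Phi_\lambda$ with critical value $c$. Conversely, suppose $\Phi_\lambda'(u) = 0$ and $\Phi_\lambda(u) = c$. The latter equation, by the uniqueness of the solution for $\lambda$ noted before \eqref{20}, gives $\lambda_c(u) = \lambda$; substituting this into the identity and using $\Phi_\lambda'(u) = 0$ yields $\lambda_c'(u) = 0$, so $u$ is a critical point of $\lambda_c$ with critical value $\lambda$.

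I do not expect a serious obstacle here: the statement is essentially a bookkeeping lemma reducing the two-parameter problem to a one-parameter variational one. The only points requiring a little care are the regularity of $\lambda_c$ away from the origin (guaranteed by $J_s > 0$ and the $C^1$ nature of the potentials) and a clean derivation of the master identity relating $\lambda_c'(u)$ to $\Phi_{\lambda_c(u)}'(u)$; once that identity is in hand, the equivalence is immediate in both directions.
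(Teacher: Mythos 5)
Your proposal is correct and is essentially the paper's own argument: the paper differentiates the identity $\lambda_c(u) = (\Phi_\lambda(u) - c)/J_s(u) + \lambda$ to get $\lambda_c'(u) = \bigl[\Phi_\lambda'(u) - (\lambda_c(u) - \lambda)\Bs(u)\bigr]/J_s(u)$, whose numerator is exactly your $\Phi_{\lambda_c(u)}'(u)$, so your ``master identity'' and the paper's equation are the same computation in different bookkeeping. Both directions of the equivalence then follow identically.
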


\begin{proof}
We have
\begin{equation} \label{25}
\lambda_c(u) = \frac{\Phi_\lambda(u) - c}{J_s(u)} + \lambda
\end{equation}
and
\begin{equation} \label{9}
\lambda_c'(u) = \frac{\Phi_\lambda'(u) - (\lambda_c(u) - \lambda)\, \Bs(u)}{J_s(u)}.
\end{equation}
So $\Phi_\lambda(u) = c$ and $\Phi_\lambda'(u) = 0$ if and only if $\lambda_c(u) = \lambda$ and $\lambda_c'(u) = 0$.
\end{proof}

\subsection{Scaled Nehari manifold}

For $u \in W \setminus \set{0}$, set
\begin{equation} \label{10}
t_u = I_s(u)^{-1/s}, \quad \pi(u) = u_{t_u}
\end{equation}
(see \eqref{4}). Then $I_s(\pi(u)) = 1$ by \eqref{3} and hence $\pi(u) \in \M_s$. For $u \in \M_s$, $t_u = 1$ and hence $\pi(u) = u$ by $(A_3)$. So the mapping $W \setminus \set{0} \to \M_s,\, u \mapsto \pi(u)$ is a continuous projection on $\M_s$. We have
\begin{equation} \label{15}
\pi(u)_{t_u^{-1}} = u
\end{equation}
by $(A_1)$ and $(A_3)$, so
\begin{equation} \label{11}
W = \set{u_t : u \in \M_s,\, t \ge 0}.
\end{equation}

For $u \in \M_s$ and $t > 0$, set
\[
\varphi_{c,u}(t) = \lambda_c(u_t).
\]
We consider the associated scaled Nehari set
\[
\N_c = \set{u_t : u \in \M_s,\, t > 0,\, \varphi_{c,u}'(t) = 0}.
\]
By \eqref{20}, \eqref{3}, and \eqref{19},
\[
\varphi_{c,u}(t) = \frac{1 - t^{q-s}\, F(u) - t^{r-s}\, G(u) - c\, t^{-s}}{J_s(u)},
\]
so the equation $\varphi_{c,u}'(t) = 0$ is equivalent to
\begin{equation} \label{22}
(s - q)\, t^q\, F(u) - (r - s)\, t^r\, G(u) + cs = 0.
\end{equation}
This in turn is equivalent to
\[
(s - q)\, F(u_t) - (r - s)\, G(u_t) + cs = 0
\]
by \eqref{19}. In view of \eqref{11}, this implies that (see also Leite et al. \cite[Section 2.1]{LeQuSi} for the standard scaling)
\begin{equation} \label{23}
\N_c = \set{u \in W \setminus \set{0} : (s - q)\, F(u) - (r - s)\, G(u) + cs = 0}.
\end{equation}

First we note that all nontrivial solutions of equation \eqref{18} with energy $c$ belong to $\N_c$.

\begin{lemma} \label{Lemma 8}
If $u \in W \setminus \set{0}$ is a solution of the equation \eqref{18} with $\Phi_\lambda(u) = c$, then $u \in \N_c$.
\end{lemma}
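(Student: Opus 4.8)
The plan is to combine the Poho\v{z}aev-type identity supplied by $(H_3)$ with the energy constraint $\Phi_\lambda(u) = c$ and then eliminate the terms $I_s(u)$ and $\lambda\, J_s(u)$, which should leave exactly the equation that defines $\N_c$ in \eqref{23}.

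First I would observe that a solution $u$ of \eqref{18} is precisely the special case $\alpha = \gamma = \delta = 1$, $\beta = \lambda$ of the equation $\alpha \As(u) = \beta \Bs(u) + \gamma f(u) + \delta g(u)$ appearing in $(H_3)$. Applying $(H_3)$ with these parameters gives
\[
s\, I_s(u) = s\lambda\, J_s(u) + q\, F(u) + r\, G(u).
\]
Next I would use the hypothesis $\Phi_\lambda(u) = c$, which by the definition of $\Phi_\lambda$ reads $I_s(u) - \lambda\, J_s(u) - F(u) - G(u) = c$; multiplying by $s$ yields
\[
s\, I_s(u) - s\lambda\, J_s(u) = cs + s\, F(u) + s\, G(u).
\]
Since the left-hand side equals $q\, F(u) + r\, G(u)$ by the Poho\v{z}aev identity, equating the two expressions and rearranging produces $(s - q)\, F(u) - (r - s)\, G(u) + cs = 0$. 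As $u \ne 0$ by assumption, this is exactly the membership condition for $\N_c$ recorded in \eqref{23}, which finishes the argument.

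The computation is a two-equation linear elimination, so I expect no genuine analytic obstacle; the entire force of the statement is carried by the availability of the Poho\v{z}aev identity, and the only step that truly uses the structural hypotheses is $(H_3)$ itself (which, as the remark following $(H_3)$ shows, holds whenever $t \mapsto u_t$ is $C^1$, by differentiating $\Psi(u_t)$ at $t = 1$). I would therefore spend essentially no effort on the algebra and instead be careful only to match the parameters $(\alpha,\beta,\gamma,\delta) = (1,\lambda,1,1)$ correctly, so that the scaling weights $s, s, q, r$ attach to $I_s, J_s, F, G$ in accordance with \eqref{3} and \eqref{19}.
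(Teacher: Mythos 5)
Your proposal is correct and coincides with the paper's own proof: both apply $(H_3)$ with $(\alpha,\beta,\gamma,\delta) = (1,\lambda,1,1)$ to get $s\, I_s(u) = s\lambda\, J_s(u) + q\, F(u) + r\, G(u)$, combine it with $I_s(u) - \lambda\, J_s(u) - F(u) - G(u) = c$, and eliminate $I_s$ and $\lambda J_s$ to arrive at the defining equation of $\N_c$ in \eqref{23}. The algebra checks out, so there is nothing to add.
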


\begin{proof}
We have
\[
s\, I_s(u) = s \lambda\, J_s(u) + q\, F(u) + r\, G(u)
\]
by $(H_3)$ and $I_s(u) - \lambda\, J_s(u) - F(u) - G(u) = c$, so $(s - q)\, F(u) - (r - s)\, G(u) + cs = 0$.
\end{proof}

We assume that
\begin{enumerate}
\item[$(H_5)$] $h(u) := (s - q)\, f(u) - (r - s)\, g(u) \ne 0$ for all $u \in \N_c$.
\end{enumerate}
Then $\N_c$ is a $C^1$-Finsler manifold. Since $F$ and $G$ are continuous and even, $\N_c$ is complete and symmetric. It is also weakly closed in view of \eqref{23}. We will refer to $\N_c$ as the scaled Nehari manifold.

Next we show that $\N_c$ is a natural constraint for $\lambda_c$. Let
\[
\Lambda_c = \restr{\lambda_c}{\N_c}.
\]
By \eqref{20} and \eqref{23},
\begin{equation} \label{24}
\Lambda_c(u) = \frac{(r - s)\, I_s(u) - (r - q)\, F(u) - cr}{(r - s)\, J_s(u)}, \quad u \in \N_c.
\end{equation}

\begin{lemma} \label{Lemma 6}
If $u \in \N_c$ is a critical point of $\Lambda_c$, then it is also a critical point of $\lambda_c$.
\end{lemma}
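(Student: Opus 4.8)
The plan is to run the classical ``natural constraint'' argument: apply the Lagrange multiplier rule on $\N_c$ and then kill the multiplier by pairing the resulting Euler--Lagrange identity with the scaling direction. Introduce the constraint functional
\[
\Gamma_c(u) = (s - q)\, F(u) - (r - s)\, G(u) + cs, \quad u \in W,
\]
so that by \eqref{23} we have $\N_c = \set{u \in W \setminus \set{0} : \Gamma_c(u) = 0}$ and $\Gamma_c'(u) = h(u)$. By $(H_5)$, $h(u) \ne 0$ on $\N_c$, so $0$ is a regular value of $\Gamma_c$ on the open set $W \setminus \set{0}$; hence $\N_c$ is a $C^1$ submanifold with $T_u \N_c = \ker h(u)$, and $\lambda_c \in C^1(W \setminus \set{0})$ restricts to $\Lambda_c \in C^1(\N_c)$.

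First I would record the Lagrange condition. If $u \in \N_c$ is a critical point of $\Lambda_c$, then $\lambda_c'(u)$ annihilates $T_u \N_c = \ker h(u)$. Since $h(u) \ne 0$, any bounded functional vanishing on the closed hyperplane $\ker h(u)$ is a scalar multiple of $h(u)$, so there is $\mu \in \R$ with
\[
\lambda_c'(u) = \mu\, h(u) \quad \text{in } W^\ast.
\]
It then suffices to prove $\mu = 0$.

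Next I would test this identity against the scaling direction $Du := \restr{(d/dt)}{t=1}\! u_t$. By the scaling identities \eqref{3} and \eqref{19}, the one-variable functions $t \mapsto \lambda_c(u_t)$ and $t \mapsto \Gamma_c(u_t)$ are smooth, and differentiating their explicit expressions at $t = 1$ gives, via the chain rule,
\[
\lambda_c'(u)\, Du = \restr{\frac{d}{dt}}{t=1}\! \lambda_c(u_t) = \frac{\Gamma_c(u)}{J_s(u)} = 0,
\]
the last equality because $u \in \N_c$ (equivalently, $t = 1$ is a critical point of the fibering map, which is exactly how $\N_c$ is defined), while
\[
h(u)\, Du = \restr{\frac{d}{dt}}{t=1}\! \Gamma_c(u_t) = (s - q)\, q\, F(u) - (r - s)\, r\, G(u).
\]
Substituting into $\lambda_c'(u) = \mu\, h(u)$ yields $0 = \mu\, \big[(s - q)\, q\, F(u) - (r - s)\, r\, G(u)\big]$, so everything reduces to showing the bracket is nonzero. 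This is precisely where $(H_4)$ enters: since $s > q$ and $r > s$, the factors $(s - q)\, q$ and $(r - s)\, r$ are positive, so in each of the six cases the two summands $(s - q)\, q\, F(u)$ and $-(r - s)\, r\, G(u)$ are either both nonnegative with at least one strictly positive, or both nonpositive with at least one strictly negative; in all cases the bracket is nonzero. (Equivalently, $t \mapsto \Gamma_c(u_t)$ is strictly monotone, so each fiber meets $\N_c$ transversally and $Du \notin T_u \N_c$.) Therefore $\mu = 0$ and $\lambda_c'(u) = 0$.

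The main obstacle I anticipate is the pairing step: one must justify that the scaling direction $Du$ exists in $W$ and that the chain rule legitimately produces the two displayed quantities, since the scaling is only assumed continuous in general (cf.\ the $C^1$ hypothesis in the Remark). This can either be granted when the scaling is $C^1$ along fibers, or circumvented by working directly with the smooth one-variable functions $t \mapsto \lambda_c(u_t)$ and $t \mapsto \Gamma_c(u_t)$, whose derivatives at $t = 1$ furnish exactly the pairings above. Once this is in place, the only genuine content is the sign computation of the last step, which is what the trichotomy of signs in $(H_4)$ is designed to guarantee.
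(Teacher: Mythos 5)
Your Lagrange-multiplier setup and the final sign analysis via $(H_4)$ match the paper, but the middle step --- killing the multiplier by pairing $\lambda_c'(u) = \mu\, h(u)$ with the ``scaling direction'' $Du = \restr{(d/dt)}{t=1}u_t$ --- has a genuine gap in the abstract setting, and your proposed circumvention does not repair it. The scaling is only assumed \emph{continuous} ($(A_1)$--$(A_5)$), so $Du$ need not exist in $W$, and without it the pairings $\lambda_c'(u)\,Du$ and $h(u)\,Du$ are undefined. Your fallback is to work with the scalar functions $t \mapsto \lambda_c(u_t)$ and $t \mapsto \Gamma_c(u_t)$, which are indeed smooth (they are explicit in $t^s, t^q, t^r$); but smoothness of these compositions does not let you transfer the relation $\lambda_c'(u) = \mu\, h(u)$ in $W^\ast$ to the relation between their $t$-derivatives: that transfer \emph{is} the chain rule, and it fails for merely continuous curves. (A toy example: $\Theta(x,y) = x^2 - y^2$ on $\R^2$ has $\Theta'(0) = 0$, yet along the continuous curve $\gamma(t) = \bigl(\sqrt{\max(t-1,0)},\, \sqrt{\max(1-t,0)}\,\bigr)$ one has $\Theta(\gamma(t)) = t - 1$, differentiable with nonzero derivative at $t=1$. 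So a vanishing Fr\'{e}chet derivative places no constraint on the derivative of the composition along a non-$C^1$ curve, and conversely.) This is exactly the point of the Remark following $(H_4)$: when the scaling is $C^1$ in $t$ your argument is legitimate, but then it simply \emph{proves} $(H_3)$, which is otherwise taken as a hypothesis to be verified by other means (e.g.\ the integration-by-parts Poho\v{z}aev identity in Section 3 for the Schr\"{o}dinger--Poisson--Slater application).

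The paper's proof avoids the issue entirely and is the route you should take: from the Lagrange relation and \eqref{9}, rewrite the condition as the operator equation $\As(u) = \lambda_c(u) \Bs(u) + [1 + \mu\,(s-q)\,J_s(u)]\, f(u) + [1 - \mu\,(r-s)\,J_s(u)]\, g(u)$, then invoke hypothesis $(H_3)$ with $\alpha = 1$, $\beta = \lambda_c(u)$, $\gamma = 1 + \mu\,(s-q)\,J_s(u)$, $\delta = 1 - \mu\,(r-s)\,J_s(u)$ to get the scalar identity $s\, I_s(u) = s\,\lambda_c(u)\,J_s(u) + q\gamma\, F(u) + r\delta\, G(u)$; combining this with \eqref{20} and the constraint \eqref{23} yields $\mu\, J_s(u)\,[\,q\,(s-q)\,F(u) - r\,(r-s)\,G(u)\,] = 0$, after which your sign argument from $(H_4)$ finishes the proof. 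Note that your write-up never uses $(H_3)$ at all --- a telltale sign that something is missing, since in this framework $(H_3)$ is precisely the surrogate for the differentiability of the scaling that your pairing step requires.
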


\begin{proof}
In view of \eqref{23},
\[
\lambda_c'(u) = \mu\, [(s - q)\, f(u) - (r - s)\, g(u)]
\]
for some Lagrange multiplier $\mu \in \R$. Combining this with \eqref{9} gives
\[
\As(u) = \lambda_c(u) \Bs(u) + [1 + \mu\, (s - q)\, J_s(u)]\, f(u) + [1 - \mu\, (r - s)\, J_s(u)]\, g(u).
\]
So $u$ satisfies
\[
s\, I_s(u) = s\, \lambda_c(u)\, J_s(u) + q\, [1 + \mu\, (s - q)\, J_s(u)]\, F(u) + r\, [1 - \mu\, (r - s)\, J_s(u)]\, G(u)
\]
by $(H_3)$. By \eqref{20} and \eqref{23}, this reduces to
\[
\mu\, J_s(u)\, [q\, (s - q)\, F(u) - r\, (r - s)\, G(u)] = 0.
\]
Since $r > s > q > 0$, $q\, (s - q)\, F(u) - r\, (r - s)\, G(u) \ne 0$ by $(H_4)$, and $J_s(u) \ne 0$ by \eqref{4}, so $\mu = 0$.
\end{proof}

Next we show that $\Lambda_c$ is bounded from below.

\begin{lemma}
$\Lambda_c$ is bounded from below on $\N_c$.
\end{lemma}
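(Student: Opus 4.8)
The plan is to argue by contradiction. Assuming $\N_c \neq \emptyset$ (otherwise there is nothing to prove), suppose there is a sequence $\seq{u_j} \subset \N_c$ with $\Lambda_c(u_j) \to - \infty$. I pass to a subsequence and split into two regimes according to whether $\seq{\norm{u_j}}$ is bounded, deriving a contradiction in each. The main device is the scaling parametrization $u_j = (v_j)_{t_j}$ with $v_j = \pi(u_j) \in \M_s$ and $t_j = I_s(u_j)^{1/s}$ (see \eqref{10} and \eqref{15}), combined with the coercivity of $I_s$ from $(A_{10})$ and the weak continuity of $F$, $G$, and $J_s$.

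In the unbounded regime I may assume $\norm{u_j} \to \infty$, so $I_s(u_j) \to \infty$ by $(A_{10})$ and hence $t_j \to \infty$. Substituting $u_j = (v_j)_{t_j}$ into \eqref{24}, using $I_s(v_j) = 1$ and the scaling relations \eqref{3} and \eqref{19}, gives
\[
\Lambda_c(u_j) = \frac{1}{J_s(v_j)}\left[1 - \frac{(r - q)\, F(v_j)\, t_j^{q-s} + cr\, t_j^{-s}}{r - s}\right].
\]
Since $\M_s$ is bounded and $F$ is bounded on bounded sets, $F(v_j)$ is bounded, and since $q < s$ the exponents $q - s$ and $-s$ are negative, so the bracketed factor tends to $1$ as $t_j \to \infty$. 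Meanwhile $1/J_s(v_j) = \widetilde{\Psi}(v_j) \ge \lambda_1 > 0$ by Theorem \ref{Theorem 7}$(i)$. Hence for $j$ large the bracket exceeds $1/2$ and $\Lambda_c(u_j) \ge \lambda_1/2 > 0$, contradicting $\Lambda_c(u_j) \to - \infty$.

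In the bounded regime I pass to a further subsequence with $u_j \wto u$ in $W$. The defining relation \eqref{23} of $\N_c$ and the weak continuity of $F$ and $G$ give $(s - q)\, F(u) - (r - s)\, G(u) + cs = 0$ in the limit. If $u = 0$ this forces $cs = 0$; but evaluating \eqref{23} at any $w \in \N_c$ gives $cs = (r - s)\, G(w) - (s - q)\, F(w)$, which is nonzero by the sign dichotomy $(H_4)$ together with $s > q$ and $r > s$, so $c \neq 0$ whenever $\N_c \neq \emptyset$. Thus $u \neq 0$, whence $J_s(u) > 0$ and $J_s(u_j) \to J_s(u) > 0$. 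Since $I_s(u_j)$ and $F(u_j)$ are bounded (boundedness of $\seq{u_j}$ and of $I_s$, $F$ on bounded sets), the numerator in \eqref{24} stays bounded while the denominator stays bounded away from $0$; hence $\seq{\Lambda_c(u_j)}$ is bounded, again a contradiction.

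The crux is the loss of compactness on $\M_s$: a sequence $v_j \in \M_s$ may satisfy $v_j \wto 0$ with $J_s(v_j) \to 0$, so that $1/J_s(v_j) \to \infty$ and the factor in front of the bracket blows up. What rescues the estimate is that this blow-up can only occur in the unbounded regime, where the bracket converges to the positive value $1$ and therefore drives $\Lambda_c(u_j)$ toward $+ \infty$ rather than $- \infty$; in the bounded regime the constraint \eqref{23} together with $c \neq 0$ prevents the weak limit from being $0$ and keeps $J_s$ bounded below. I expect the only delicate points to be extracting $c \neq 0$ from $(H_4)$ and making the bounded-regime estimate uniform along the sequence.
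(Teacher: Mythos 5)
Your proof is correct, and it reaches the conclusion by a genuinely different organization than the paper's. The paper runs a single thread: if $\Lambda_c$ were unbounded below, by \eqref{24} there would be a sequence $\seq{u_j}$ in $\N_c$ with negative numerator, $(r-s)\, I_s(u_j) < (r-q)\, F(u_j) + cr$, and $J_s(u_j) \to 0$; the negative-numerator inequality, rewritten through the scaling $u_j = (\widetilde{u}_j)_{\widetilde{t}_j}$, forces $\widetilde{t}_j$ (hence, by $(A_{10})$, $\seq{u_j}$) to be bounded, so a weak subsequential limit $u$ satisfies $J_s(u) = 0$, i.e.\ $u = 0$, contradicting the weak closedness of $\N_c$ recorded just before the lemma. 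You instead split into the regimes $\norm{u_j} \to \infty$ and $\seq{u_j}$ bounded. Your unbounded regime is dispatched by a positivity estimate (the bracket tends to $1$ and $1/J_s(v_j) \ge \lambda_1 > 0$), which is in effect a miniature of the paper's subsequent coercivity proof (Lemma \ref{Lemma 9}) and proves more than boundedness below in that regime. Your bounded regime replaces the appeal to weak closedness by the explicit observation that \eqref{23} and $(H_4)$ force $c \ne 0$ whenever $\N_c \ne \emptyset$, so the weak limit cannot vanish, $J_s(u_j)$ stays bounded away from zero, and $\Lambda_c(u_j)$ stays bounded; this is the contrapositive of the paper's step, and your $c \ne 0$ argument is exactly what underlies the paper's earlier assertion that $\N_c$ is weakly closed. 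What the paper's route buys is brevity, since weak closedness is already on record; what yours buys is self-containedness and a quantitative positive lower bound along norm-unbounded sequences that anticipates the coercivity of Lemma \ref{Lemma 9}.
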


\begin{proof}
Suppose not. Since $r > s$, then it follows from \eqref{24} that there is a sequence $\seq{u_j}$ in $\N_c$ such that
\begin{equation} \label{28}
(r - s)\, I_s(u_j) < (r - q)\, F(u_j) + cr
\end{equation}
and
\begin{equation} \label{29}
J_s(u_j) \to 0.
\end{equation}
Set
\[
t_j = t_{u_j} = I_s(u_j)^{-1/s}, \quad \widetilde{u}_j = \pi(u_j) = (u_j)_{t_j}, \quad \widetilde{t}_j = t_j^{-1} = I_s(u_j)^{1/s}
\]
(see \eqref{10}). Then $\widetilde{u}_j \in \M_s$, $u_j = (\widetilde{u}_j)_{\widetilde{t}_j}$ by \eqref{15}, and \eqref{28} together with \eqref{19} gives
\[
(r - s)\, \widetilde{t}_j^{\, s} < (r - q)\, \widetilde{t}_j^{\, q}\, F(\widetilde{u}_j) + cr.
\]
Since $r > s > q > 0$ and $F(\widetilde{u}_j)$ is bounded since $\M_s$ is bounded and $F$ is bounded on bounded sets, it follows that $\widetilde{t}_j$ is bounded. So $\seq{u_j}$ is bounded by $(A_{10})$ and hence converges weakly to some $u \in W$ for a renamed subsequence. Then $J_s(u_j) \to J_s(u)$ and hence $J_s(u) = 0$ by \eqref{29}, so $u = 0$ by \eqref{4}. However, since $\N_c$ is weakly closed, $u \in \N_c \subset W \setminus \set{0}$, a contradiction.
\end{proof}

Our next lemma shows that $\Lambda_c$ is coercive, uniformly in $c$ on bounded sets.

\begin{lemma} \label{Lemma 9}
$\Lambda_c(u) \to \infty$ as $\norm{u} \to \infty$ in $\N_c$, uniformly in $c$ on bounded sets.
\end{lemma}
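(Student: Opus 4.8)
The plan is to argue by contradiction, reducing everything to the bounded manifold $\M_s$ via the projection $\pi$ and exploiting weak compactness there. Negating the claim, I would fix a bounded set $B \subset \R$, a constant $M$, and sequences $c_j \in B$ and $u_j \in \N_{c_j}$ with $\norm{u_j} \to \infty$ but $\Lambda_{c_j}(u_j) \le M$. Writing $v_j = \pi(u_j) \in \M_s$ and $t_j = I_s(u_j)^{1/s}$, so that $u_j = (v_j)_{t_j}$ by \eqref{15}, coercivity $(A_{10})$ together with $I_s(u_j) = t_j^s\, I_s(v_j) = t_j^s$ forces $t_j \to \infty$. Since $\M_s$ is bounded and $W$ is reflexive, I would pass to a subsequence with $v_j \wto v$, so that $F(v_j) \to F(v)$, $G(v_j) \to G(v)$, and $J_s(v_j) \to J_s(v)$ by the weak-continuity consequences of $(H_2)$ and $(A_9)$.

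The first key step is to show that the weak limit is nontrivial, i.e.\ $J_s(v) > 0$. Using the scaling relations to rewrite \eqref{24} as
\[
(r-s)\, J_s(v_j)\, \Lambda_{c_j}(u_j) = (r-s) - (r-q)\, t_j^{q-s}\, F(v_j) - c_j r\, t_j^{-s},
\]
the right-hand side tends to $r - s > 0$ because $q < s$, $F(v_j)$ is bounded on the bounded set $\M_s$, and $c_j$ is bounded. Hence the right-hand side is eventually at least $(r-s)/2$, which forces $\Lambda_{c_j}(u_j) > 0$ and, combined with $\Lambda_{c_j}(u_j) \le M$, yields a uniform lower bound $J_s(v_j) \ge 1/(2M)$ and therefore $J_s(v) > 0$, so $v \ne 0$.

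The second key step is to extract the vanishing of $F$ or $G$ at the limit from the Nehari constraint \eqref{23}, which in scaled form reads $(s-q)\, t_j^q\, F(v_j) - (r-s)\, t_j^r\, G(v_j) + c_j s = 0$. In the cases of $(H_4)$ where $G \not\equiv 0$, dividing by $t_j^r$ and letting $j \to \infty$ (using $q < r$ and boundedness) gives $G(v) = 0$; in the cases where $G \equiv 0$, dividing instead by $t_j^q$ and using $q > 0$ gives $F(v) = 0$. Either way the corresponding alternative of $(H_4)$ is violated at the nonzero point $v$, the desired contradiction. I expect the main subtlety to be organizing this last step so that all six cases of $(H_4)$ are handled uniformly — choosing the correct power $t_j^r$ or $t_j^q$ by which to divide — and confirming that uniformity in $c$ is automatic, since the bounded sequence $c_j$ only ever enters through terms that vanish as $t_j \to \infty$.
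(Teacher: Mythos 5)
Your proof is correct and follows essentially the same route as the paper's: projection onto the bounded manifold $\M_s$, weak compactness, the scaled Nehari constraint \eqref{23} forcing $G$ (respectively $F$, when $G \equiv 0$) to vanish at the weak limit, and the scaled rewriting of \eqref{24}. The only difference is organizational: the paper argues directly — by $(H_4)$ the weak limit is $0$, so $J_s(\widetilde{u}_j) \to 0$ while the numerator tends to $r - s > 0$, whence $\Lambda_{c_j}(u_j) \to \infty$ — whereas you run the same two computations inside a proof by contradiction.
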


\begin{proof}
Let $\seq{c_j}$ be a bounded sequence and let $\seq{u_j}$ be a sequence in $\N_c$ such that $\norm{u_j} \to \infty$. Set
\[
t_j = t_{u_j} = I_s(u_j)^{-1/s}, \quad \widetilde{u}_j = \pi(u_j) = (u_j)_{t_j}, \quad \widetilde{t}_j = t_j^{-1} = I_s(u_j)^{1/s}
\]
(see \eqref{10}). Then $\widetilde{u}_j \in \M_s$ and
\begin{equation} \label{27}
u_j = (\widetilde{u}_j)_{\widetilde{t}_j}
\end{equation}
by \eqref{15}. Since $\M_s$ is a bounded manifold, $\seq{\widetilde{u}_j}$ is bounded and hence converges weakly to some $\widetilde{u} \in W$ for a renamed subsequence. Since $\norm{u_j} \to \infty$, $I_s(u_j) \to \infty$ by $(A_{10})$ and hence $\widetilde{t}_j \to \infty$. By \eqref{23}, \eqref{27}, and \eqref{19},
\begin{equation} \label{30}
(s - q)\, \widetilde{t}_j^{\, -(r-q)}\, F(\widetilde{u}_j) - (r - s)\, G(\widetilde{u}_j) + c_j\, s\, \widetilde{t}_j^{\, -r} = 0.
\end{equation}
Since $\widetilde{t}_j \to \infty$, $r > s > q > 0$, $F(\widetilde{u}_j) \to F(\widetilde{u})$, $G(\widetilde{u}_j) \to G(\widetilde{u})$, and $c_j$ is bounded, this implies that $G(\widetilde{u}) = 0$, so $\widetilde{u} = 0$ in cases $(iii)$--$(vi)$ of $(H_4)$. In cases $(i)$ and $(ii)$, \eqref{30} reduces to
\[
(s - q)\, F(\widetilde{u}_j) + c_j\, s\, \widetilde{t}_j^{\, -q} = 0,
\]
which implies that $F(\widetilde{u}) = 0$, so $\widetilde{u} = 0$ again. So $J_s(\widetilde{u}_j) \to J_s(0) = 0$. Since
\[
\Lambda_{c_j}(u_j) = \frac{(r - s)\, I_s(u_j) - (r - q)\, F(u_j) - c_j\, r}{(r - s)\, J_s(u_j)} = \frac{r - s - (r - q)\, \widetilde{t}_j^{\, -(s-q)}\, F(\widetilde{u}_j) - c_j\, r\, \widetilde{t}_j^{\, -s}}{(r - s)\, J_s(\widetilde{u}_j)}
\]
by \eqref{24}, \eqref{27}, \eqref{3}, and \eqref{19}, then $\Lambda_{c_j}(u_j) \to \infty$.
\end{proof}

Finally we show that $\Lambda_c$ satisfies the \PS{} condition.

\begin{lemma}
$\Lambda_c$ satisfies the {\em \PS{}} condition, i.e., every sequence $\seq{u_j}$ in $\N_c$ such that $\Lambda_c(u_j)$ is bounded and $\Lambda_c'(u_j) \to 0$ has a strongly convergent subsequence.
\end{lemma}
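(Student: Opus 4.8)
The plan is to follow the standard recipe for verifying the \PS{} condition on a natural constraint, with one nonstandard ingredient: a proof that the Lagrange multipliers stay bounded, which is where the hypothesis that $\As$ maps bounded sets into bounded sets enters decisively.

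First I would fix a sequence $\seq{u_j}$ in $\N_c$ with $\Lambda_c(u_j)$ bounded and $\Lambda_c'(u_j) \to 0$. Since $\Lambda_c$ is coercive on $\N_c$ by Lemma \ref{Lemma 9}, boundedness of $\Lambda_c(u_j)$ forces $\seq{u_j}$ to be bounded, and by reflexivity of $W$ I may pass to a subsequence with $u_j \wto u$. Because $\N_c$ is weakly closed in view of \eqref{23}, the limit satisfies $u \in \N_c$; in particular $u \ne 0$ and $J_s(u) > 0$ by \eqref{4}, and $h(u) \ne 0$ by $(H_5)$. Since $\N_c$ is the level set of $\Gamma(u) = (s - q)\, F(u) - (r - s)\, G(u) + cs$ with $\Gamma' = h$, the condition $\Lambda_c'(u_j) \to 0$ on this $C^1$-Finsler manifold yields Lagrange multipliers $\mu_j \in \R$ with $\lambda_c'(u_j) - \mu_j\, h(u_j) \to 0$ in $W^\ast$. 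Multiplying by $J_s(u_j)$ and using \eqref{9}, I would rewrite this as
\[
\As(u_j) = \lambda_c(u_j)\, \Bs(u_j) + \big[1 + \mu_j J_s(u_j)(s - q)\big] f(u_j) + \big[1 - \mu_j J_s(u_j)(r - s)\big] g(u_j) + \rho_j,
\]
where $\rho_j \to 0$ in $W^\ast$.

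The main obstacle is to show that $\seq{\mu_j}$ is bounded, and I would argue by contradiction. Suppose $|\mu_j| \to \infty$ along a subsequence. Dividing the displayed identity by $\mu_j$ and letting $j \to \infty$, the constant-coefficient contributions vanish while $(H_2)$ gives $f(u_j) \to f(u)$ and $g(u_j) \to g(u)$ in $W^\ast$, $(A_9)$ gives $\Bs(u_j) \to \Bs(u)$, $J_s(u_j) \to J_s(u)$, and $\lambda_c(u_j) = \Lambda_c(u_j)$ is bounded, so only the terms proportional to $\mu_j$ survive and
\[
\mu_j^{-1}\, \As(u_j) \to J_s(u)\, h(u) \quad \text{in } W^\ast.
\]
But $\seq{u_j}$ is bounded and a scaled operator maps bounded sets into bounded sets, so $\norm[W^\ast]{\As(u_j)}$ is bounded and the left-hand side tends to $0$. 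Hence $J_s(u)\, h(u) = 0$, contradicting $J_s(u) > 0$ and $h(u) \ne 0$. Therefore $\seq{\mu_j}$ is bounded, and I may assume $\mu_j \to \mu$.

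With the multipliers under control the remainder is routine. Testing the displayed identity against $u_j - u$, every term on the right tends to zero: $\Bs(u_j)(u_j - u)$, $f(u_j)(u_j - u)$, and $g(u_j)(u_j - u)$ all converge to $0$ since the functionals converge in $W^\ast$ while $u_j - u \wto 0$, the coefficients $1 + \mu_j J_s(u_j)(s - q)$ and $1 - \mu_j J_s(u_j)(r - s)$ are bounded, and $\rho_j(u_j - u) \to 0$. Consequently $\As(u_j)(u_j - u) \to 0$, and $(A_7)$ delivers a subsequence converging strongly to $u$, which establishes the \PS{} condition. The two points to treat explicitly are the passage from $\Lambda_c'(u_j) \to 0$ to its Lagrange-multiplier form on the Finsler manifold $\N_c$, and the verification that the strong $W^\ast$-convergences of $f(u_j)$, $g(u_j)$, $\Bs(u_j)$ combine with $u_j - u \wto 0$ to force the pairings to vanish; both are standard but should be recorded.
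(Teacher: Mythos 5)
Your proof is correct and takes essentially the same approach as the paper's: extract a bounded, weakly convergent subsequence via the coercivity of Lemma \ref{Lemma 9}, write the Lagrange multiplier identity for $\lambda_c'(u_j)$ and combine it with \eqref{9}, show the multipliers are bounded, and then test against $u_j - u$ to conclude via $(A_9)$, $(H_2)$, and $(A_7)$. The only difference is cosmetic: you prove multiplier boundedness by contradiction (dividing the identity by $\mu_j$ and using that $\As$ maps bounded sets into bounded sets), whereas the paper obtains the same conclusion directly from the observation that $h(u_j) \to h(u) \ne 0$ keeps $\norm[W^\ast]{h(u_j)}$ bounded away from zero while the remaining terms in the identity stay bounded.
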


\begin{proof}
In view of \eqref{23},
\[
\lambda_c'(u_j) = \mu_j\, h(u_j) + \o(1)
\]
for some sequence of Lagrange multipliers $\seq{\mu_j}$ in $\R$. Combining this with \eqref{9} gives
\begin{equation} \label{26}
\As(u_j) = \Lambda_c(u_j) \Bs(u_j) + f(u_j) + g(u_j) + \gamma_j\, h(u_j) + \o(J_s(u_j)),
\end{equation}
where $\gamma_j = \mu_j\, J_s(u_j)$.

By Lemma \ref{Lemma 9}, $\seq{u_j}$ is bounded and hence converges weakly to some $u \in W$ for a renamed subsequence. Then $u \in \N_c$ since $\N_c$ is weakly closed, so $h(u) \ne 0$ by $(H_5)$. Since $h(u_j) \to h(u)$ in $W^\ast$ by $(H_2)$, then $h(u_j)$ is bounded away from zero in $W^\ast$. Since $\As$, $\Bs$, $f$, $g$, and $J_s$ are bounded on bounded sets and $\Lambda_c(u_j)$ is bounded by assumption, it now follows from \eqref{26} that $\gamma_j$ is bounded. So applying \eqref{26} to $u_j - u$ and noting that $\Bs(u_j)(u_j - u) \to 0$ by $(A_9)$ and $f(u_j)(u_j - u) \to 0$ and $g(u_j)(u_j - u) \to 0$ by $(H_2)$ (see Perera et al.\! \cite[Lemma 3.4]{MR2640827}), we have $\As(u_j)(u_j - u) \to 0$, so $u_j \to u$ for a further subsequence by $(A_7)$.
\end{proof}

\subsection{Minimax critical values}

Write $I^+ = (0,\infty)$ and $I^- = (- \infty,0)$. Then for each $u \in \M_s$, equation \eqref{22} has a unique positive solution $t = t_c(u)$ in the following six cases:
\begin{enumroman}
\item $F(u) > 0$ and $G(u) = 0$ for all $u \in W \setminus \set{0}$ and $c \in I^-$,
\item $F(u) < 0$ and $G(u) = 0$ for all $u \in W \setminus \set{0}$ and $c \in I^+$,
\item $F(u) = 0$ and $G(u) > 0$ for all $u \in W \setminus \set{0}$ and $c \in I^+$,
\item $F(u) = 0$ and $G(u) < 0$ for all $u \in W \setminus \set{0}$ and $c \in I^-$,
\item $F(u) > 0$ and $G(u) < 0$ for all $u \in W \setminus \set{0}$ and $c \in I^-$,
\item $F(u) < 0$ and $G(u) > 0$ for all $u \in W \setminus \set{0}$ and $c \in I^+$.
\end{enumroman}
We will simply write $I$ for $I^+$ or $I^-$ when there is no need to differentiate between these cases.

For each $c \in I$, we can define an increasing and unbounded sequence of critical values of $\Lambda_c$ as follows. Let $\F_c$ denote the class of symmetric subsets of $\N_c$. For $k \ge 1$, let $\F_{c,k} = \bgset{M \in \F_c : i(M) \ge k}$, set
\begin{equation} \label{50}
\lambda_{c,k} := \inf_{M \in \F_{c,k}}\, \sup_{u \in M}\, \Lambda_c(u),
\end{equation}
and set $\lambda_{0,k} = \lambda_k$ (see \eqref{5}). We have the following proposition (see Perera et al.\! \cite[Proposition 3.52]{MR2640827}).

\begin{proposition} \label{Proposition 3}
$\lambda_{c,k} \nearrow \infty$ is a sequence of critical values of $\Lambda_c$.
\end{proposition}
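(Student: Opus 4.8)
The plan is to recognize Proposition \ref{Proposition 3} as an instance of the abstract cohomological-index minimax principle (Perera et al. \cite[Proposition 3.52]{MR2640827}) and to supply the hypotheses it requires. First I would record the structural facts already in hand: $\N_c$ is a complete, symmetric $C^1$-Finsler manifold with $0 \notin \N_c$, and $\Lambda_c$ is an even $C^1$ functional on it (evenness is immediate from \eqref{24}, since $I_s$, $F$, and $J_s$ are even). The three analytic hypotheses --- that $\Lambda_c$ is bounded from below, is coercive (Lemma \ref{Lemma 9}), and satisfies the \PS{} condition --- were established in the preceding lemmas. With these in place the abstract theorem applies and yields the entire conclusion; what remains in a self-contained account is to indicate why the two assertions, monotone critical values and divergence, hold.

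For the fact that each $\lambda_{c,k}$ is a critical value and that $\lambda_{c,k} \le \lambda_{c,k+1}$, I would argue by the standard quantitative deformation. Monotonicity is immediate from \eqref{50}: since $\F_{c,k+1} \subseteq \F_{c,k}$, the infimum over the smaller family is at least as large. For criticality, suppose $a := \lambda_{c,k}$ is a regular value. Because $\Lambda_c$ is even and satisfies \PS{}, an odd pseudo-gradient flow on $\N_c$ produces $\eps > 0$ and an odd continuous map $\eta \colon \N_c \to \N_c$ with $\eta\big(\Lambda_c^{\, a + \eps}\big) \subseteq \Lambda_c^{\, a - \eps}$, where $\Lambda_c^{\, b} = \set{u \in \N_c : \Lambda_c(u) \le b}$. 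Choosing $M \in \F_{c,k}$ with $\sup_M \Lambda_c < a + \eps$ gives $M \subseteq \Lambda_c^{\, a + \eps}$, hence $\eta(M) \subseteq \Lambda_c^{\, a - \eps}$; by the monotonicity of the $\Z_2$-cohomological index under odd continuous maps \cite{MR0478189}, $i(\eta(M)) \ge i(M) \ge k$, so $\eta(M) \in \F_{c,k}$ and therefore $\lambda_{c,k} \le \sup_{\eta(M)} \Lambda_c \le a - \eps$, contradicting $\lambda_{c,k} = a$.

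For the divergence $\lambda_{c,k} \nearrow \infty$ I would first observe that $\N_c$ has infinite index. The radial projection $\M_s \to \N_c$, $u \mapsto u_{t_c(u)}$, is an odd homeomorphism --- its inverse is the restriction to $\N_c$ of the projection $\pi$ in \eqref{10}, since membership in $\N_c$ forces $\widetilde{t} = t_c(\pi(u))$ by uniqueness of the positive solution of \eqref{22} --- so $i(\N_c) = i(\M_s)$, and $i(\M_s) = \infty$ because problem \eqref{1} has the infinite sequence of eigenvalues of Theorem \ref{Theorem 7}. Now suppose, for contradiction, that $\lambda_{c,k} \le b$ for all $k$. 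By Lemma \ref{Lemma 9} the sublevel set $\Lambda_c^{\, b+1}$ is bounded, and by \PS{} the critical set of $\Lambda_c$ at each level is compact and thus of finite index; the deformation lemma then forces $i\big(\Lambda_c^{\, a}\big)$ to be finite for every $a$, increasing only as $a$ crosses the (eigenvalue-type) critical levels. But for each $k$ a set $M_k \in \F_{c,k}$ with $\sup_{M_k} \Lambda_c < b+1$ lies in the fixed set $\Lambda_c^{\, b+1}$ while $i(M_k) \ge k$, forcing $i\big(\Lambda_c^{\, b+1}\big) \ge k$ for all $k$, a contradiction.

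I expect the delicate point to be precisely this finiteness of the index of the sublevel sets, which is what converts coercivity into unboundedness of the spectrum. It mirrors the corresponding statement for the eigenvalue functional $\widetilde{\Psi}$ in Theorem \ref{Theorem 7}$(iii)$, where $i(\widetilde{\Psi}^{\lambda})$ equals $k$ on $(\lambda_k,\lambda_{k+1})$; in the present setting it is supplied --- together with the rest of the conclusion --- by the cited abstract proposition \cite[Proposition 3.52]{MR2640827}, so that the only genuinely new work is the verification of its hypotheses carried out above.
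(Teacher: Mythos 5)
Your proposal matches the paper's own treatment: the paper obtains Proposition \ref{Proposition 3} precisely by citing \cite[Proposition 3.52]{MR2640827}, with its hypotheses (completeness and symmetry of the $C^1$-Finsler manifold $\N_c$, evenness of $\Lambda_c$, boundedness from below, coercivity, and the \PS{} condition) supplied by the preceding lemmas, exactly as you do. Your additional deformation and index sketches go beyond what the paper records (it gives no internal proof of the abstract result), and you correctly flag that the delicate step --- the divergence $\lambda_{c,k} \nearrow \infty$, whose rigorous proof needs the standard neighborhood-of-$K_\lambda$ argument rather than finiteness of $i(\Lambda_c^a)$ per se --- is ultimately taken from the cited proposition, just as in the paper.
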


Note that
\[
\N_c = \set{u_{t_c(u)} : u \in \M_s}.
\]
Set
\[
\widetilde{\Lambda}_c(u) = \Lambda_c(u_{t_c(u)}), \quad u \in \M_s.
\]
The mapping
\[
\M_s \to \N_c, \quad u \mapsto u_{t_c(u)}
\]
is an odd homeomorphism with the inverse $\pi$ (see \eqref{10}), so it follows from the monotonicity of the index that
\begin{equation} \label{31}
\lambda_{c,k} = \inf_{M \in \F_k}\, \sup_{u \in M}\, \widetilde{\Lambda}_c(u)
\end{equation}
(see \eqref{5}).

\subsection{Analysis of the curves}

In this section we present a detailed analysis of the curves $C_k = \set{(\lambda_{c,k},c) : c \in I},\, k \ge 1$. The arguments here are adapted from Ramos Quoirin et al.\! \cite[Section 4]{MR4736027}.

By \eqref{20}, \eqref{3}, and \eqref{19},
\begin{equation} \label{37}
\widetilde{\Lambda}_c(u) = \frac{1 - t_c(u)^{-(s-q)}\, F(u) - t_c(u)^{r-s}\, G(u) - c\, t_c(u)^{-s}}{J_s(u)}, \quad (u,c) \in \M_s \times I.
\end{equation}
For fixed $u \in \M_s$, let
\[
H(c,t) = (s - q)\, t^q\, F(u) - (r - s)\, t^r\, G(u) + cs, \quad (c,t) \in I \times (0,\infty)
\]
and note that
\begin{equation} \label{32}
H(c,t_c(u)) = (s - q)\, t_c(u)^q\, F(u) - (r - s)\, t_c(u)^r\, G(u) + cs = 0.
\end{equation}
We have
\[
\frac{\partial H}{\partial t}(c,t_c(u)) = q\, (s - q)\, t_c(u)^{q-1}\, F(u) - r\, (r - s)\, t_c(u)^{r-1}\, G(u) \ne 0
\]
by $(H_4)$, so it follows from the implicit function theorem that the mapping $I \to (0,\infty),\, c \mapsto t_c(u)$ is $C^1$. Then the mapping $c \mapsto \widetilde{\Lambda}_c(u)$ is also $C^1$ and
\begin{equation} \label{33}
\frac{\partial \widetilde{\Lambda}_c(u)}{\partial c} = \frac{t_c(u)^{-s-1} H(c,t_c(u))\, \dfrac{\partial t_c(u)}{\partial c} - t_c(u)^{-s}}{J_s(u)} = - \frac{t_c(u)^{-s}}{J_s(u)}
\end{equation}
by \eqref{32}. First we note that the curves $C_k$ are nonincreasing.

\begin{lemma} \label{Lemma 1}
The following hold.
\begin{enumroman}
\item For each $u \in \M_s$, the mapping $I \to \R,\, c \mapsto \widetilde{\Lambda}_c(u)$ is decreasing.
\item For each $k \ge 1$, the mapping $I \to \R,\, c \mapsto \lambda_{c,k}$ is nonincreasing.
\end{enumroman}
\end{lemma}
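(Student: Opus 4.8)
The plan is to deduce both parts from the formula \eqref{33} for the $c$-derivative of $\widetilde{\Lambda}_c(u)$ together with the minimax characterization \eqref{31}, the crucial structural point being that the index class $\F_k$ appearing in \eqref{31} consists of symmetric subsets of the fixed manifold $\M_s$ and therefore does not depend on $c$.

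For part $(i)$ I would simply invoke \eqref{33}, which gives
\[
\frac{\partial \widetilde{\Lambda}_c(u)}{\partial c} = - \frac{t_c(u)^{-s}}{J_s(u)}.
\]
Since $t_c(u) > 0$ for every $(u,c) \in \M_s \times I$, being by definition the unique positive root of \eqref{22}, and $J_s(u) > 0$ by \eqref{4}, the right-hand side is strictly negative. Hence for each fixed $u \in \M_s$ the $C^1$ mapping $c \mapsto \widetilde{\Lambda}_c(u)$ has everywhere negative derivative on the interval $I$, so it is strictly decreasing. This furnishes the pointwise ingredient.

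For part $(ii)$ I would pass from this pointwise statement to the minimax values via \eqref{31}. Fixing $k \ge 1$ and taking $c_1, c_2 \in I$ with $c_1 < c_2$, part $(i)$ gives $\widetilde{\Lambda}_{c_2}(u) \le \widetilde{\Lambda}_{c_1}(u)$ for every $u \in \M_s$. Consequently, for any symmetric set $M \in \F_k$,
\[
\sup_{u \in M}\, \widetilde{\Lambda}_{c_2}(u) \le \sup_{u \in M}\, \widetilde{\Lambda}_{c_1}(u),
\]
and taking the infimum over $M \in \F_k$ — the same competing family for both values of $c$ — yields $\lambda_{c_2,k} \le \lambda_{c_1,k}$ by \eqref{31}. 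Thus $c \mapsto \lambda_{c,k}$ is nonincreasing.

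The substantive fact that makes this argument work, and which I would emphasize, is that \eqref{31} transports the minimax from the $c$-dependent manifold $\N_c$ onto the fixed manifold $\M_s$ through the odd homeomorphism $u \mapsto u_{t_c(u)}$, so that the entire $c$-dependence is carried by the integrand $\widetilde{\Lambda}_c(u)$ while the competing class $\F_k$ remains frozen; this is precisely what reduces the monotonicity of the curves $C_k$ to the elementary computation \eqref{33}. There is no real obstacle here, only one point warranting care: the supremum and infimum preserve the weak inequality but may collapse the strict pointwise inequality of part $(i)$, which is why part $(ii)$ can assert only that $c \mapsto \lambda_{c,k}$ is \emph{nonincreasing} rather than \emph{decreasing}.
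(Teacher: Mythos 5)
Your proposal is correct and follows the paper's own proof exactly: part $(i)$ from the derivative formula \eqref{33} together with the positivity \eqref{4}, and part $(ii)$ by transporting the pointwise monotonicity through the minimax characterization \eqref{31}, whose competing class $\F_k$ is independent of $c$. The paper states this in two lines; your write-up merely makes the same argument explicit.
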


\begin{proof}
$(i)$ holds since $\dfrac{\partial \widetilde{\Lambda}_c(u)}{\partial c} < 0$ by
\eqref{33} and \eqref{4}. $(ii)$ follows from $(i)$ and \eqref{31}.
\end{proof}

Lemma \ref{Lemma 1} implies that we can work in a suitable sublevel set $\widetilde{\Lambda}_c^T = \bgset{u \in \M_s : \widetilde{\Lambda}_c(u) \le T}$ in \eqref{31}.

\begin{lemma} \label{Lemma 2}
Let $[a,b] \subset I$, let $k \ge 1$, and let $T > \lambda_{a,k}$. Denote by $\F_{b,T}$ the class of symmetric subsets of $\widetilde{\Lambda}_b^T$ and let $\F_{b,T,k} = \bgset{M \in \F_{b,T} : i(M) \ge k}$. Then
\[
\inf_{M \in \F_{b,T,k}}\, \sup_{u \in M}\, \widetilde{\Lambda}_c(u) = \lambda_{c,k} \quad \forall c \in [a,b].
\]
\end{lemma}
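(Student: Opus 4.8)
The plan is to prove that restricting the minimax class from all symmetric subsets of $\N_c$ to those lying in the sublevel set $\widetilde{\Lambda}_b^T$ does not change the minimax value $\lambda_{c,k}$ for any $c \in [a,b]$. The key observation driving everything is the monotonicity in $c$ from Lemma \ref{Lemma 1}: since $c \mapsto \widetilde{\Lambda}_c(u)$ is decreasing for each fixed $u \in \M_s$, we have $\widetilde{\Lambda}_c(u) \ge \widetilde{\Lambda}_b(u)$ for all $c \le b$. This gives the containment of sublevel sets $\widetilde{\Lambda}_c^T \subseteq \widetilde{\Lambda}_b^T$ whenever $c \le b$, which is the structural fact I will exploit. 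I will work entirely on $\M_s$ via the reformulation \eqref{31}, so that the index $i$ is computed for symmetric subsets of $\M_s$ and I can compare classes cleanly.

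**The two inequalities.**
I would prove the claimed equality by establishing both $\ge$ and $\le$. For the inequality $\lambda_{c,k} \le \inf_{M \in \F_{b,T,k}} \sup_M \widetilde{\Lambda}_c$: every $M \in \F_{b,T,k}$ is in particular a symmetric subset of $\M_s$ with $i(M) \ge k$, hence admissible in the unrestricted minimax \eqref{31}, so its supremum is bounded below by $\lambda_{c,k}$; taking the infimum preserves this. For the reverse inequality, the idea is that an essentially optimal set for $\lambda_{c,k}$ can be pushed into $\widetilde{\Lambda}_b^T$ without raising its $\widetilde{\Lambda}_c$-values. Fix $c \in [a,b]$ and an arbitrary $M \in \F_k$ (a symmetric subset of $\M_s$ with $i(M) \ge k$). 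I want to replace $M$ by a set $M' \subseteq \widetilde{\Lambda}_b^T$ with $i(M') \ge k$ and $\sup_{M'} \widetilde{\Lambda}_c \le \sup_M \widetilde{\Lambda}_c$. The natural device is the negative gradient flow (or a deformation) of $\widetilde{\Lambda}_b$ on $\M_s$, which exists because the functionals satisfy \PS{} and $\widetilde{\Lambda}_b$ is bounded below and coercive; since $T > \lambda_{a,k} \ge \lambda_{b,k}$ (using Lemma \ref{Lemma 1}(ii) and $a \le b$), the value $T$ lies strictly above the $k$-th minimax level of $\widetilde{\Lambda}_b$, so a set of index $\ge k$ cannot be deformed entirely below $T$, but it can be deformed \emph{into} $\widetilde{\Lambda}_b^T$.

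**Carrying out the deformation.**
Concretely, I would argue as follows. Given $\eps > 0$, choose $M \in \F_k$ with $\sup_M \widetilde{\Lambda}_c \le \lambda_{c,k} + \eps$. Apply the deformation lemma to $\widetilde{\Lambda}_b$: flowing along $-\nabla \widetilde{\Lambda}_b$ produces an odd continuous deformation $\eta$ on $\M_s$ that decreases $\widetilde{\Lambda}_b$ and maps $\widetilde{\Lambda}_b^{\sup_M \widetilde{\Lambda}_b}$ appropriately; the crucial point is to land inside $\widetilde{\Lambda}_b^T$. The subtlety is that the flow decreases $\widetilde{\Lambda}_b$, and I need to control $\widetilde{\Lambda}_c$ along the flow. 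Here I would use that on $\M_s$ the two functionals $\widetilde{\Lambda}_b$ and $\widetilde{\Lambda}_c$ are linked through the scaling parameter $t_c(u)$, and in fact the negative gradient flow of $\widetilde{\Lambda}_b$ can be arranged (after reparametrization or by a direct estimate) not to increase $\widetilde{\Lambda}_c$; alternatively, invoke that $M' := \eta(M) \subseteq \widetilde{\Lambda}_b^T$ has $i(M') \ge i(M) \ge k$ by oddness and continuity of $\eta$, so $M' \in \F_{b,T,k}$, whence $\inf_{\F_{b,T,k}} \sup \widetilde{\Lambda}_c \le \sup_{M'} \widetilde{\Lambda}_c$. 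Letting $\eps \to 0$ then yields the reverse inequality.

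**Main obstacle.**
The hard part will be controlling $\widetilde{\Lambda}_c$ while deforming along $\widetilde{\Lambda}_b$: a deformation that lowers $\widetilde{\Lambda}_b$ need not lower $\widetilde{\Lambda}_c$, so I expect the cleanest route is to avoid deforming altogether and instead argue by a \emph{direct truncation} using the sublevel containment. Specifically, I would set $M' = M \cap \widetilde{\Lambda}_b^T$ and show $i(M') \ge k$ by proving that the complementary piece $M \cap \widetilde{\Lambda}_{b,>T}$ has index strictly less than $k$ (because $T$ exceeds the $k$-th minimax level of $\widetilde{\Lambda}_b$, the super-level set $\set{\widetilde{\Lambda}_b > T}$ is "small" in index), then invoke the subadditivity and monotonicity properties of the cohomological index to conclude $i(M') \ge i(M) \ge k$. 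On $M' \subseteq \widetilde{\Lambda}_b^T \subseteq \widetilde{\Lambda}_c^?$ one has $\sup_{M'} \widetilde{\Lambda}_c \le \sup_M \widetilde{\Lambda}_c$ trivially since $M' \subseteq M$, giving the reverse inequality directly. The delicate index estimate for the super-level set—matching the threshold $T > \lambda_{a,k}$ to the index drop—is where the real content lies and is precisely what the monotonicity Lemma \ref{Lemma 1} together with Theorem \ref{Theorem 7}(iii)-style index computations is designed to supply.
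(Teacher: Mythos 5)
Your setup is right, and you have already written down the one fact that makes this lemma a two-line argument: for $c \le b$, Lemma \ref{Lemma 1} $(i)$ gives $\widetilde{\Lambda}_c(u) \ge \widetilde{\Lambda}_b(u)$ for every $u \in \M_s$, hence $\widetilde{\Lambda}_c^T \subseteq \widetilde{\Lambda}_b^T$. But neither of the two routes you then propose for the reverse inequality is sound. The deformation route founders on exactly the obstacle you name yourself: a flow that decreases $\widetilde{\Lambda}_b$ gives no control on $\widetilde{\Lambda}_c$, and nothing in the framework supplies such control. The truncation route contains a genuine logical error: from $i(M) \ge k$ and $i\bigl(M \cap \{\widetilde{\Lambda}_b > T\}\bigr) < k$ you cannot conclude $i\bigl(M \cap \widetilde{\Lambda}_b^T\bigr) \ge k$. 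Subadditivity runs the wrong way: it gives $i(M) \le i\bigl(M \cap \widetilde{\Lambda}_b^T\bigr) + i\bigl(M \cap \{\widetilde{\Lambda}_b > T\}\bigr)$, so to recover index $\ge k$ for the truncated piece you would need the superlevel piece to have index $0$, i.e.\ to be empty; a set of index $k$ can perfectly well be the union of two pieces each of index smaller than $k$, so "index drop" of the complement proves nothing.

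The point you are missing is that for the sets that actually matter, the superlevel piece \emph{is} empty, precisely by the containment you stated at the outset. Fix $c \in [a,b]$ and $0 < \eps < T - \lambda_{a,k}$, and pick $M \in \F_k$ with $\sup_{u \in M} \widetilde{\Lambda}_c(u) \le \lambda_{c,k} + \eps$. Since $\lambda_{c,k} \le \lambda_{a,k}$ by Lemma \ref{Lemma 1} $(ii)$, this supremum is $< T$, so $M \subset \widetilde{\Lambda}_c^T \subset \widetilde{\Lambda}_b^T$, i.e.\ $M \in \F_{b,T,k}$ as it stands: no deformation, no truncation, and no index property beyond $\F_{b,T,k} \subset \F_k$ is needed. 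Letting $\eps \to 0$ gives the reverse inequality. This is exactly the paper's proof, stated there contrapositively: any $M \in \F_k \setminus \F_{b,T,k}$ contains a point where $\widetilde{\Lambda}_b > T$, hence
\[
\sup_{u \in M}\, \widetilde{\Lambda}_c(u) \ge \sup_{u \in M}\, \widetilde{\Lambda}_b(u) > T > \lambda_{a,k} \ge \lambda_{c,k},
\]
so such sets never compete for the infimum.
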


\begin{proof}
Clearly, $\F_{b,T,k} \subset \F_k$. If $M \in \F_k \setminus \F_{b,T,k}$, then
\[
\sup_{u \in M}\, \widetilde{\Lambda}_c(u) \ge \sup_{u \in M}\, \widetilde{\Lambda}_b(u) > T >	\lambda_{a,k} \ge \lambda_{c,k}
\]
by Lemma \ref{Lemma 1}.
\end{proof}

We note that in view of \eqref{32}, $\widetilde{\Lambda}_c(u)$ can also be written as
\begin{equation} \label{40}
\widetilde{\Lambda}_c(u) = \frac{r - s - (r - q)\, t_c(u)^{-(s-q)}\, F(u) - cr\, t_c(u)^{-s}}{(r - s)\, J_s(u)}
\end{equation}
and
\begin{equation} \label{41}
\widetilde{\Lambda}_c(u) = \frac{s - q\, t_c(u)^{-(s-q)}\, F(u) - r\, t_c(u)^{r-s}\, G(u)}{s\, J_s(u)}.
\end{equation}
Next we prove the following estimates.

\begin{lemma} \label{Lemma 3}
Let $[a,b] \subset I$ and let $T \in \R$. Then $\exists C > 0$ such that the following hold.
\begin{enumroman}
\item $J_s(u) \ge C^{-1} \quad \forall u \in \widetilde{\Lambda}_b^T$
\item $C^{-1} \le t_c(u) \le C \quad \forall (u,c) \in \widetilde{\Lambda}_b^T \times [a,b]$
\item $- C \le \dfrac{\partial \widetilde{\Lambda}_c(u)}{\partial c} \le - C^{-1} \quad \forall (u,c) \in \widetilde{\Lambda}_b^T \times [a,b]$
\end{enumroman}
\end{lemma}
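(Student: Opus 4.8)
The plan is to establish the three bounds in the order $(i)$, $(ii)$, $(iii)$, obtaining $(iii)$ almost for free once $(i)$ and $(ii)$ are in hand, and to produce a single constant $C$ at the end by taking the maximum of the finitely many constants that arise. The standing observation I will use repeatedly is that $\M_s$ is bounded and $F$, $G$, $J_s$ are bounded on bounded sets, so there is $M > 0$ with $|F(u)|, |G(u)|, J_s(u) \le M$ for all $u \in \M_s$; moreover $F$, $G$, $J_s$ are weakly sequentially continuous (by $(H_2)$ and $(A_9)$), so along any sequence in $\M_s$ a weakly convergent subsequence can be extracted on which these three functionals converge. Throughout, $t = t_c(u)$ denotes the positive solution of \eqref{22}, that is $(s-q)\, t^q F(u) - (r-s)\, t^r G(u) + cs = 0$.

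For $(i)$ I would argue by contradiction: suppose $u_j \in \widetilde{\Lambda}_b^T$ with $J_s(u_j) \to 0$. Passing to a subsequence, $u_j \wto u$, and weak continuity gives $J_s(u) = 0$, hence $u = 0$ and $F(u_j) \to 0$, $G(u_j) \to 0$. The equation \eqref{22} at $c = b$ then forces $t_b(u_j) \to \infty$, for otherwise both $t_b(u_j)^q F(u_j)$ and $t_b(u_j)^r G(u_j)$ would tend to $0$, contradicting $bs \ne 0$. Feeding $t_b(u_j) \to \infty$ back into \eqref{22} (divide by $t_b(u_j)^r$ and use $q < s < r$) shows $t_b(u_j)^{r-s}\, G(u_j) \to 0$, while $t_b(u_j)^{-(s-q)}\, F(u_j) \to 0$ is immediate. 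On the other hand, writing the sublevel constraint $\widetilde{\Lambda}_b(u_j) \le T$ through form \eqref{41} yields
\[
q\, t_b(u_j)^{-(s-q)}\, F(u_j) + r\, t_b(u_j)^{r-s}\, G(u_j) \ge s - sT\, J_s(u_j) \longrightarrow s > 0,
\]
which is incompatible with the left-hand side tending to $0$. This contradiction proves $(i)$, and it is precisely here — in the interplay between the constraint \eqref{22} and the sublevel bound — that I expect the main difficulty; the remaining steps are comparatively routine.

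For the lower bound in $(ii)$ no appeal to $(i)$ is needed: if $t_{c_j}(u_j) \to 0$ for some $(u_j,c_j) \in \widetilde{\Lambda}_b^T \times [a,b]$, then both variable terms in \eqref{22} vanish in the limit by boundedness of $F$, $G$ on $\M_s$, forcing $c_j \to 0$, which contradicts $[a,b] \subset I$ being bounded away from $0$. For the upper bound I argue by contradiction and lean on $(i)$: if $t_{c_j}(u_j) \to \infty$, dividing \eqref{22} by $t_{c_j}(u_j)^r$ gives $G(u_j) \to 0$, so in cases $(iii)$--$(vi)$ of $(H_4)$ the weak limit $u$ satisfies $G(u) = 0$ and hence $u = 0$, whence $J_s(u_j) \to 0$ against $(i)$; in cases $(i)$--$(ii)$, where $G \equiv 0$, dividing \eqref{22} by $t_{c_j}(u_j)^q$ instead gives $F(u_j) \to 0$, so again $u = 0$ and $J_s(u_j) \to 0$, a contradiction. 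Finally $(iii)$ follows from the explicit formula \eqref{33}, namely $\partial \widetilde{\Lambda}_c(u)/\partial c = - t_c(u)^{-s}/J_s(u)$: the two-sided bound $C^{-1} \le t_c(u) \le C$ from $(ii)$, the lower bound $J_s(u) \ge C^{-1}$ from $(i)$, and the upper bound $J_s(u) \le M$ together confine $t_c(u)^{-s}/J_s(u)$ between two positive constants, giving the claimed estimate after enlarging $C$.
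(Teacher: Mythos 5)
Your proposal is correct and takes essentially the same approach as the paper: contradiction arguments built on weak compactness of $\M_s$, weak continuity of $F$, $G$, $J_s$, and the constraint \eqref{32}, with the only cosmetic difference that in $(i)$ you conclude via form \eqref{41} (which requires the extra limit $t_b(u_j)^{r-s}\, G(u_j) \to 0$), whereas the paper uses form \eqref{40}, in which $G$ has already been eliminated, so that $\widetilde{\Lambda}_b(u_j) \to \infty$ is immediate. One small slip to fix: to get $t_b(u_j)^{r-s}\, G(u_j) \to 0$ you must divide \eqref{22} by $t_b(u_j)^{s}$, not by $t_b(u_j)^{r}$ --- the latter only yields $G(u_j) \to 0$, which is not enough since $t_b(u_j)^{r-s} \to \infty$; with that correction your argument is complete.
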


\begin{proof}
$(i)$ Suppose there is a sequence $\seq{u_j}$ in $\widetilde{\Lambda}_b^T$ such that $J_s(u_j) \to 0$. Since $\M_s$ is bounded, $\seq{u_j}$ is bounded and hence converges weakly to some $u \in W$ for a renamed subsequence. Then $J_s(u_j) \to J_s(u)$, so $J_s(u) = 0$. Then $u = 0$ by \eqref{4}, so $F(u_j) \to F(0) = 0$ and $G(u_j) \to G(0) = 0$. Since $b \ne 0$, then $t_b(u_j) \to \infty$ by \eqref{32}. Then $\widetilde{\Lambda}_b(u_j) \to \infty$ by \eqref{40}, contradicting $\widetilde{\Lambda}_b(u_j) \le T$.

$(ii)$ If the first inequality does not hold, then there are sequences $\seq{u_j}$ in $\widetilde{\Lambda}_b^T$ and $\seq{c_j}$ in $[a,b]$ such that $t_{c_j}(u_j) \to 0$. Since $F$ and $G$ are bounded on bounded sets, then $c_j \to 0$ by \eqref{32}, contradicting $0 \notin [a,b]$.

If the second inequality does not hold, then there are sequences $\seq{u_j}$ in $\widetilde{\Lambda}_b^T$ and $\seq{c_j}$ in $[a,b]$ such that $t_{c_j}(u_j) \to \infty$. A renamed subsequence of $\seq{u_j}$ converges weakly to some $u \in W$. Then $F(u_j) \to F(u)$ and $G(u_j) \to G(u)$, so \eqref{32} implies that $G(u) = 0$. So $u = 0$ in cases $(iii)$--$(vi)$ of $(H_4)$. In cases $(i)$ and $(ii)$, \eqref{32} reduces to
\[
(s - q)\, F(u_j) + c_j\, s\, t_{c_j}(u_j)^{-q} = 0,
\]
which implies that $F(u) = 0$. So $u = 0$ again. So $J_s(u_j) \to J_s(0) = 0$, contradicting $(i)$.

$(iii)$ This is immediate from \eqref{33}, $(i)$, and $(ii)$ since $J_s$ is bounded on bounded sets.
\end{proof}

We can now show that the curves $C_k$ are continuous and decreasing.

\begin{proposition} \label{Proposition 1}
For each $k \ge 1$, the mapping $I \to \R,\, c \mapsto \lambda_{c,k}$ is continuous and decreasing.
\end{proposition}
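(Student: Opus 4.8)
The plan is to strengthen the nonincreasing monotonicity of Lemma \ref{Lemma 1}$(ii)$ into strict monotonicity together with continuity, by establishing a two-sided Lipschitz estimate for $c \mapsto \lambda_{c,k}$ on each compact subinterval $[a,b] \subset I$. The engine of the argument is the uniform bound on $\partial \widetilde{\Lambda}_c(u)/\partial c$ from Lemma \ref{Lemma 3}$(iii)$: I will integrate it in $c$ to control the increments of $\widetilde{\Lambda}_c(u)$ uniformly in $u$, and then pass this control through the minimax formula \eqref{31}, relying on Lemma \ref{Lemma 2} to realize every value $\lambda_{c,k}$ on $[a,b]$ through one fixed family of competitors.

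First I fix $[a,b] \subset I$ and choose $T > \lambda_{a,k}$, so that by Lemma \ref{Lemma 2}
\[
\lambda_{c,k} = \inf_{M \in \F_{b,T,k}}\, \sup_{u \in M}\, \widetilde{\Lambda}_c(u) \qquad \forall c \in [a,b];
\]
thus the same class $\F_{b,T,k}$ of symmetric subsets of the sublevel set $\widetilde{\Lambda}_b^T$ computes $\lambda_{c,k}$ for every $c$ in the interval. Let $C > 0$ be the constant provided by Lemma \ref{Lemma 3} for this $[a,b]$ and $T$. Then, for $a \le c_1 < c_2 \le b$ and any $u \in \widetilde{\Lambda}_b^T$, integrating the estimate $-C \le \partial \widetilde{\Lambda}_c(u)/\partial c \le -C^{-1}$ over $c \in [c_1,c_2]$ yields
\[
-C\,(c_2 - c_1) \le \widetilde{\Lambda}_{c_2}(u) - \widetilde{\Lambda}_{c_1}(u) \le -C^{-1}\,(c_2 - c_1),
\]
with constants independent of $u$. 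Taking the supremum over $u \in M$ for a fixed $M \in \F_{b,T,k}$ and then the infimum over such $M$, these inequalities transfer to the minimax values, giving
\[
C^{-1}\,(c_2 - c_1) \le \lambda_{c_1,k} - \lambda_{c_2,k} \le C\,(c_2 - c_1).
\]
The left inequality gives strict monotonicity and the right inequality gives Lipschitz continuity on $[a,b]$; since $[a,b] \subset I$ was arbitrary, the mapping $c \mapsto \lambda_{c,k}$ is continuous and decreasing on all of $I$.

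The step I expect to require the most care is the passage from the pointwise-in-$u$ increment bounds to the corresponding bounds on $\lambda_{c,k}$. This hinges on two features working together: the uniformity in $u$ of the constant $C$ over the sublevel set $\widetilde{\Lambda}_b^T$, supplied by Lemma \ref{Lemma 3}, and the identity of Lemma \ref{Lemma 2}, which guarantees that a single family $\F_{b,T,k}$ realizes $\lambda_{c,k}$ for all $c \in [a,b]$. Concretely, one must check that $\sup_{u \in M}\big[\widetilde{\Lambda}_{c_1}(u) - C^{-1}(c_2-c_1)\big] = \sup_{u\in M}\widetilde{\Lambda}_{c_1}(u) - C^{-1}(c_2-c_1)$ (which uses uniformity of the shift), and that applying $\inf_{M}$ preserves the inequality in both directions. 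Without a common competitor class one could only compare each $\lambda_{c,k}$ against its own minimizing sequence, and the clean two-sided Lipschitz estimate would break down.
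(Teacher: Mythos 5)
Your proof is correct and takes essentially the same approach as the paper: both fix $[a,b] \subset I$ and $T > \lambda_{a,k}$, use Lemma \ref{Lemma 2} to compute $\lambda_{c,k}$ with the single competitor class $\F_{b,T,k}$, apply the uniform two-sided bound on $\partial \widetilde{\Lambda}_c(u)/\partial c$ from Lemma \ref{Lemma 3} $(iii)$, and pass the resulting two-sided Lipschitz estimate through the minimax to conclude. The only cosmetic difference is that you integrate the derivative bound over $[c_1,c_2]$ where the paper invokes the mean value theorem at the endpoints $a,b$.
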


\begin{proof}
Let $[a,b] \subset I$ and let $T > \lambda_{a,k}$. Then
\begin{equation} \label{39}
\inf_{M \in \F_{b,T,k}}\, \sup_{u \in M}\, \widetilde{\Lambda}_c(u) = \lambda_{c,k} \quad \forall c \in [a,b]
\end{equation}
by Lemma \ref{Lemma 2}. For each $u \in \widetilde{\Lambda}_b^T$,
\[
\widetilde{\Lambda}_b(u) - \widetilde{\Lambda}_a(u) = \frac{\partial \widetilde{\Lambda}_c(u)}{\partial c}\, (b - a)
\]
for some $c \in (a,b)$ by the mean value theorem and hence
\[
- C\, (b - a) \le \widetilde{\Lambda}_b(u) - \widetilde{\Lambda}_a(u) \le - C^{-1}\, (b - a)
\]
by Lemma \ref{Lemma 3} $(iii)$. This together with \eqref{39} gives
\[
- C\, (b - a) \le \lambda_{b,k} - \lambda_{a,k} \le - C^{-1}\, (b - a),
\]
from which the desired conclusions follow.
\end{proof}

Next we establish the following limits for $\widetilde{\Lambda}_c$.

\begin{lemma} \label{Lemma 4}
Let $K \subset \M_s$ be a compact set.
\begin{enumroman}
\item If $F(u) > 0$ and $G(u) \le 0$ for all $u \in W \setminus \set{0}$, then
    \[
    \lim_{c \to 0^-}\, \sup_{u \in K}\, \widetilde{\Lambda}_c(u) = - \infty.
    \]
\item If $F(u) \le 0$ and $G(u) > 0$ for all $u \in W \setminus \set{0}$, then
    \[
    \lim_{c \to \infty}\, \sup_{u \in K}\, \widetilde{\Lambda}_c(u) = - \infty.
    \]
\item If $F(u) < 0$ and $G(u) \ge 0$ for all $u \in W \setminus \set{0}$, then
    \[
    \lim_{c \to 0^+}\, \inf_{u \in \M_s}\, \widetilde{\Lambda}_c(u) = \infty.
    \]
\item If $F(u) \ge 0$ and $G(u) < 0$ for all $u \in W \setminus \set{0}$, then
    \[
    \lim_{c \to - \infty}\, \inf_{u \in \M_s}\, \widetilde{\Lambda}_c(u) = \infty.
    \]
\end{enumroman}
\end{lemma}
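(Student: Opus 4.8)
The plan is to work throughout with the explicit formula \eqref{41} for $\widetilde{\Lambda}_c$ and the defining relation \eqref{32} for $t_c(u)$, tracking the two competing terms $-q\,t_c(u)^{-(s-q)}F(u)$ and $-r\,t_c(u)^{r-s}G(u)$ in the numerator of \eqref{41} as $c$ approaches the relevant endpoint. The four parts split into two groups: in $(i)$ and $(ii)$ one controls a supremum over a \emph{compact} set $K$, which makes all estimates pointwise-uniform by continuity; in $(iii)$ and $(iv)$ one controls an infimum over all of $\M_s$, which is the genuinely hard case since $\M_s$ is bounded but not compact.

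For $(i)$ I would first show $t_c(u)\to0$ uniformly on $K$ as $c\to0^-$. Since $K$ is compact and $F$ is continuous, $F$ is bounded between positive constants on $K$ and $G$ is bounded; under the sign hypotheses both nonconstant terms of \eqref{32} have the sign of $-cs$, so \eqref{32} pins $t_c(u)^q$ between positive multiples of $|c|$, giving $t_c(u)\to0$ uniformly. Then $-q\,t_c(u)^{-(s-q)}F(u)\to-\infty$ uniformly while $-r\,t_c(u)^{r-s}G(u)\to0$, and dividing by $s\,J_s(u)$, which is bounded above on $K$, yields $\sup_K\widetilde{\Lambda}_c\to-\infty$. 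Part $(ii)$ is the same after replacing $t_c(u)\to0$ by $t_c(u)\to\infty$ (now forced by $G$ being bounded below on $K$ and $c\to\infty$), so that the dominant term is $-r\,t_c(u)^{r-s}G(u)\to-\infty$.

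For $(iii)$ and $(iv)$ I would run a dichotomy on $J_s$. Fix small $\eps>0$ and write \eqref{41} as $\widetilde{\Lambda}_c(u)=\Theta_c(u)/J_s(u)$ with $\Theta_c(u)=1-\tfrac{q}{s}\,t_c(u)^{-(s-q)}F(u)-\tfrac{r}{s}\,t_c(u)^{r-s}G(u)$. The first step is a uniform lower bound $\Theta_c(u)\ge\tfrac12$ for all $u\in\M_s$ and all small $c$; granting it, on $\{J_s<\eps\}$ we get $\widetilde{\Lambda}_c\ge 1/(2\eps)$ immediately. On the slice $\{u\in\M_s:J_s(u)\ge\eps\}$, which keeps $u$ bounded away from $0$ weakly, weak continuity of $F$ and $J_s$ (by $(H_2)$) forces any weak limit of a minimizing sequence for $-F$ to be nonzero, so $-F\ge\delta(\eps)>0$ there. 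Inserting $-F\ge\delta$ into \eqref{32} gives $t_c(u)^q\le cs/((s-q)\delta)$, hence $t_c(u)\to0$ uniformly on this slice as $c\to0^+$; then the positive term $\tfrac{q}{s}\,t_c(u)^{-(s-q)}(-F(u))$ blows up uniformly, while the $G$-term vanishes, so $\Theta_c\to\infty$ and, since $J_s\le\sup_{\M_s}J_s<\infty$, $\widetilde{\Lambda}_c\to\infty$ uniformly there. Combining the two regions, letting $c$ go first and then $\eps\to0$, gives $\inf_{\M_s}\widetilde{\Lambda}_c\to\infty$ (this infimum being $\lambda_{c,1}$). Part $(iv)$ is the mirror image under $q\leftrightarrow r$, $F\leftrightarrow G$, $t\leftrightarrow t^{-1}$, and $c\to0^+\leftrightarrow c\to-\infty$, with $t_c(u)\to\infty$ replacing $t_c(u)\to0$.

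The hard part will be the uniform bound $\Theta_c\ge\tfrac12$, which is the only place the noncompactness of $\M_s$ really bites. When the wrong-sign potential is absent (i.e.\ $G\equiv0$ in $(iii)$, $F\equiv0$ in $(iv)$) it is trivial because then $\Theta_c\ge1$. In the mixed cases the danger is a regime where the offending term $\tfrac{r}{s}\,t_c(u)^{r-s}G(u)$ is comparable to $1$; here I would use \eqref{32} to eliminate $F$ and bound this term by a multiple of $c\,t_c(u)^{-s}$ via $t_c(u)^rG(u)\le cs/(r-s)$, and observe that in the only regime where it is not negligible the constraint forces $t_c(u)^r\gtrsim c$, so the contribution is $\mathrm{O}(c^{\,1-s/r})\to0$ because $s<r$. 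The complementary inequality $s>q$ is exactly what drives the good term to $+\infty$ on $\{J_s\ge\eps\}$. I expect the bookkeeping in this uniform estimate — in particular verifying that the worst case over the constraint set is the superscaled-dominant endpoint $t_c(u)^rG(u)=cs/(r-s)$ — to be the most delicate point.
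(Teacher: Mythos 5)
Your proposal is correct, and for parts $(i)$ and $(ii)$ it is essentially the paper's own proof: \eqref{32} forces $t_c(u)\to0$ (resp.\ $t_c(u)\to\infty$) uniformly, and then \eqref{41} gives the conclusion since $F$ (resp.\ $G$) is bounded away from zero on the compact set $K$. One harmless misattribution there: in $(ii)$ what forces $t_c(u)\to\infty$ uniformly is that $F$ and $G$ are bounded \emph{above} on the bounded set $\M_s$ (otherwise the left-hand side of \eqref{32} could not reach $cs$); the lower bound $G\ge\delta>0$ on $K$ is used only afterwards, to make $-r\,t_c(u)^{r-s}G(u)\to-\infty$.

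For $(iii)$ and $(iv)$ your route is organized genuinely differently from the paper's, though it runs on the same fuel (weak sequential continuity of $F$, $G$, $J_s$ from $(H_2)$ and $(A_9)$, boundedness of $\M_s$, and the constraint \eqref{32}). The paper argues by contradiction along sequences $u_j\in\M_s$, $c_j\to0^+$ with $\widetilde{\Lambda}_{c_j}(u_j)\le C$, splitting according to whether $F(u_j)$ is bounded away from zero (then $t_{c_j}(u_j)\to0$ and \eqref{41} blows up) or $F(u_j)\to0$ along a subsequence (then $u_j\wto0$, so $J_s(u_j)\to0$ while the numerator of \eqref{41} stays $\ge s-\o(1)$, the wrong-sign term being handled by the same two-case observation you use). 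You instead split $\M_s$ into $\set{J_s<\eps}$ and $\set{J_s\ge\eps}$, prove the uniform numerator bound $\Theta_c\ge\tfrac12$ --- which is correct and follows cleanly from the interpolation $t^{r-s}G=(t^rG)^{(r-s)/r}\,G^{s/r}\le\left(cs/(r-s)\right)^{(r-s)/r}\left(\sup_{\M_s}G\right)^{s/r}=\O(c^{1-s/r})$, exactly your exponent --- and use weak compactness only to get $-F\ge\delta(\eps)$ on the second region. In the mirror case $(iv)$ the "hard part" is in fact easier than you anticipate: there \eqref{32} together with boundedness of $F$ and $G$ forces $t_c(u)\to\infty$ uniformly on all of $\M_s$, so the offending term $\tfrac{q}{s}t_c(u)^{-(s-q)}F(u)$ vanishes uniformly with no interpolation needed (the mirror symmetry $t\leftrightarrow t^{-1}$ is imperfect, since in case $(iii)$ $t_c(u)\to0$ does \emph{not} hold uniformly on $\M_s$). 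What your version buys is quantitative, uniform-in-$u$ information (explicit rates in $c$, and the reusable fact that the numerator of \eqref{41} is uniformly bounded below for small $|c|$); what the paper's sequential contradiction buys is brevity, avoiding the double limit $c\to0^+$ followed by $\eps\to0$.
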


\begin{proof}
$(i)$ Since $F > 0$ and $G \le 0$, it follows from \eqref{32} that $t_c(u)^q\, F(u) \to 0$ as $c \to 0^-$, uniformly on $\M_s$. Since $F$ is bounded away from zero on the compact set $K$, this implies that $t_c(u) \to 0$ as $c \to 0^-$, uniformly on $K$. The desired conclusion now follows from \eqref{41} since $G$ and $J$ are bounded on bounded sets.

$(ii)$ Since $F$ and $G$ are bounded on bounded sets, it follows from \eqref{32} that $t_c(u) \to \infty$ as $c \to \infty$, uniformly on $\M_s$. Since $G > 0$ is bounded away from zero on the compact set $K$ and $J$ is bounded on bounded sets, the desired conclusion now follows from \eqref{41}.

$(iii)$ Suppose not. Then there are sequences $\seq{u_j}$ in $\M_s$ and $c_j \to 0^+$ such that $\widetilde{\Lambda}_{c_j}(u_j) \le C$ for some constant $C > 0$. Since $F(u_j) < 0$ and $G(u_j) \ge 0$, it follows from \eqref{32} that $t_{c_j}(u_j)^q\, F(u_j) \to 0$ and $t_{c_j}(u_j)^r\, G(u_j) \to 0$. If $F(u_j)$ is bounded away from zero, this implies that $t_{c_j}(u_j) \to 0$. Since $G(u_j)$ and $J(u_j)$ are bounded, then it follows from \eqref{41} that $\widetilde{\Lambda}_{c_j}(u_j) \to \infty$, contradicting $\widetilde{\Lambda}_{c_j}(u_j) \le C$. So we may assume that $F(u_j) \to 0$ for a renamed subsequence. A further subsequence of $\seq{u_j}$ converges weakly to some $u \in W$. Then $F(u_j) \to F(u)$, so $F(u) = 0$ and hence $u = 0$. So $J_s(u_j) \to J_s(0) = 0$. Either $t_{c_j}(u_j)$ is bounded away from zero, or $t_{c_j}(u_j) \to 0$ for a subsequence. In the former case, $t_{c_j}(u_j)^{r-s}\, G(u_j) \to 0$ since $t_{c_j}(u_j)^r\, G(u_j) \to 0$. In the latter case, $t_{c_j}(u_j)^{r-s}\, G(u_j) \to 0$ since $G(u_j)$ is bounded. Since $F(u_j) < 0$, then
\[
\widetilde{\Lambda}_{c_j}(u_j) > \frac{s - r\, t_{c_j}(u_j)^{r-s}\, G(u_j)}{s\, J_s(u_j)} \to \infty,
\]
again contradicting $\widetilde{\Lambda}_{c_j}(u_j) \le C$. So the desired conclusion holds.

$(iv)$ Suppose not. Then there are sequences $\seq{u_j}$ in $\M_s$ and $c_j \to - \infty$ such that $\widetilde{\Lambda}_{c_j}(u_j) \le C$ for some constant $C > 0$. Since $F(u_j)$ and $G(u_j)$ are bounded, it follows from \eqref{32} that $t_{c_j}(u_j) \to \infty$. If $G(u_j) < 0$ is bounded away from zero, then it follows from \eqref{41} that $\widetilde{\Lambda}_{c_j}(u_j) \to \infty$ since $F(u_j)$ and $J(u_j)$ are bounded, contradicting $\widetilde{\Lambda}_{c_j}(u_j) \le C$. So we may assume that $G(u_j) \to 0$ for a renamed subsequence. A further subsequence of $\seq{u_j}$ converges weakly to some $u \in W$. Then $G(u_j) \to G(u)$, so $G(u) = 0$ and hence $u = 0$. So $J_s(u_j) \to J_s(0) = 0$. Since $G(u_j) < 0$ and $t_{c_j}(u_j)^{-(s-q)}\, F(u_j) \to 0$, then
\[
\widetilde{\Lambda}_{c_j}(u_j) > \frac{s - q\, t_{c_j}(u_j)^{-(s-q)}\, F(u_j)}{s\, J_s(u_j)} \to \infty,
\]
again contradicting $\widetilde{\Lambda}_{c_j}(u_j) \le C$. So the desired conclusion holds.
\end{proof}

Next we note that we can work in a suitable sublevel set $\widetilde{\Psi}^T = \bgset{u \in \M_s : \widetilde{\Psi}(u) \le T}$ in \eqref{5}.

\begin{lemma} \label{Lemma 5}
Let $k \ge 1$ and let $T > \lambda_k$. Denote by $\F_T$ the class of symmetric subsets of $\widetilde{\Psi}^T$ and let $\F_{T,k} = \bgset{M \in \F_T : i(M) \ge k}$. Then
\[
\inf_{M \in \F_{T,k}}\, \sup_{u \in M}\, \widetilde{\Psi}(u) = \lambda_k.
\]
\end{lemma}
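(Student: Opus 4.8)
The plan is to mimic the structure of Lemma \ref{Lemma 2}, whose proof rests on the monotonicity of the minimax values together with a comparison between the sublevel set and the full class $\F_k$. Here the situation is even simpler because we are dealing with the single fixed functional $\widetilde{\Psi}$ rather than the $c$-dependent family $\widetilde{\Lambda}_c$, so no mean-value or continuity-in-$c$ argument is needed. The quantity $\lambda_k$ defined in \eqref{5} is the infimum over $M \in \F_k$ of $\sup_{u \in M} \widetilde{\Psi}(u)$, and restricting the infimum to those $M$ that additionally lie inside the sublevel set $\widetilde{\Psi}^T$ can only increase it; the content of the lemma is that for $T > \lambda_k$ this restriction changes nothing.

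First I would observe the trivial inclusion $\F_{T,k} \subset \F_k$, which immediately gives
\[
\inf_{M \in \F_{T,k}}\, \sup_{u \in M}\, \widetilde{\Psi}(u) \ge \inf_{M \in \F_k}\, \sup_{u \in M}\, \widetilde{\Psi}(u) = \lambda_k,
\]
so one direction is free. For the reverse inequality I would argue that any competitor $M \in \F_k$ that realizes a value close to the infimum can be replaced by one sitting inside $\widetilde{\Psi}^T$ without raising its $\sup$. The key point is that if $M \in \F_k \setminus \F_{T,k}$, i.e.\ $i(M) \ge k$ but $M \not\subset \widetilde{\Psi}^T$, then $\sup_{u \in M} \widetilde{\Psi}(u) > T > \lambda_k$; hence such sets are irrelevant to the infimum. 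More precisely, given $\eps > 0$ pick $M \in \F_k$ with $\sup_{u \in M} \widetilde{\Psi}(u) < \lambda_k + \eps$, where I may take $\eps$ small enough that $\lambda_k + \eps \le T$; then automatically $M \subset \widetilde{\Psi}^{\lambda_k + \eps} \subset \widetilde{\Psi}^T$, so $M \in \F_{T,k}$ and therefore $\inf_{M \in \F_{T,k}} \sup \widetilde{\Psi} < \lambda_k + \eps$. Letting $\eps \to 0$ yields the reverse inequality.

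The only subtlety, and what I expect to be the main point requiring care, is ensuring that the truncated set $M$ still satisfies the index constraint $i(M) \ge k$ after one restricts attention to the sublevel set — but in this argument no truncation of $M$ is actually performed: the chosen near-optimal $M$ lies entirely within $\widetilde{\Psi}^{\lambda_k + \eps}$ by virtue of its $\sup$ bound, so its index is unchanged and the membership $M \in \F_{T,k}$ is automatic. This is precisely why choosing $\eps$ with $\lambda_k + \eps \le T$ is the crux: it converts the strict-sublevel containment into membership in the restricted class. I would therefore present the proof in two lines: the inclusion $\F_{T,k} \subset \F_k$ for the lower bound, and the $\eps$-minimizer argument above for the matching upper bound, mirroring the style of the proof of Lemma \ref{Lemma 2}.
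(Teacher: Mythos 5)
Your proposal is correct and follows essentially the same argument as the paper: the inclusion $\F_{T,k} \subset \F_k$ gives one inequality, and the observation that any $M \in \F_k$ with $\sup_M \widetilde{\Psi} \le T$ automatically lies in $\F_{T,k}$ gives the other. The paper phrases this second step in contrapositive form (any $M \in \F_k \setminus \F_{T,k}$ satisfies $\sup_M \widetilde{\Psi} > T > \lambda_k$, so such sets cannot affect the infimum), which is logically identical to your $\eps$-near-minimizer argument.
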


\begin{proof}
Clearly, $\F_{T,k} \subset \F_k$. If $M \in \F_k \setminus \F_{T,k}$, then
\[
\sup_{u \in M}\, \widetilde{\Psi}(u) > T > \lambda_k. \QED
\]
\end{proof}

Finally we establish the following limits for $\lambda_{c,k}$.

\begin{proposition} \label{Proposition 2}
For each $k \ge 1$, the following hold.
\begin{enumroman}
\item If $F(u) > 0$ and $G(u) \le 0$ for all $u \in W \setminus \set{0}$, then $\ds{\lim_{c \to 0^-}}\, \lambda_{c,k} = - \infty$. If, in addition, $G = 0$, then
    $\ds{\lim_{c \to - \infty}}\, \lambda_{c,k} = \lambda_k$.
\item If $F(u) \le 0$ and $G(u) > 0$ for all $u \in W \setminus \set{0}$, then $\ds{\lim_{c \to \infty}}\, \lambda_{c,k} = - \infty$. If, in addition, $F = 0$, then
    $\ds{\lim_{c \to 0^+}}\, \lambda_{c,k} = \lambda_k$.
\item If $F(u) < 0$ and $G(u) \ge 0$ for all $u \in W \setminus \set{0}$, then $\ds{\lim_{c \to 0^+}}\, \lambda_{c,k} = \infty$. If, in addition, $G = 0$, then
    $\ds{\lim_{c \to \infty}}\, \lambda_{c,k} = \lambda_k$.
\item If $F(u) \ge 0$ and $G(u) < 0$ for all $u \in W \setminus \set{0}$, then $\ds{\lim_{c \to - \infty}}\, \lambda_{c,k} = \infty$. If, in addition, $F = 0$, then
    $\ds{\lim_{c \to 0^-}}\, \lambda_{c,k} = \lambda_k$.
\end{enumroman}
\end{proposition}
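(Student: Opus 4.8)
The plan is to reduce every assertion to the two minimax characterizations $\lambda_{c,k} = \inf_{M \in \F_k}\, \sup_{u \in M} \widetilde{\Lambda}_c(u)$ in \eqref{31} and $\lambda_k = \inf_{M \in \F_k}\, \sup_{u \in M} \widetilde{\Psi}(u)$ in \eqref{5}, and to carry out a pointwise comparison of $\widetilde{\Lambda}_c$ with $\widetilde{\Psi}$ on $\M_s$. The four limits equal to $\pm \infty$ come straight out of Lemma \ref{Lemma 4}. Fixing a $k$-dimensional subspace $V \subset W$, the set $K = \M_s \cap V$ is compact, symmetric, and (since $I_s$ is even, coercive, and strictly increasing along each ray from the origin because $\As(u)u > 0$ for $u \ne 0$) homeomorphic to $S^{k-1}$, so $K \in \F_k$. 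As every $M \in \F_k$ is nonempty, one has the elementary squeeze $\inf_{u \in \M_s} \widetilde{\Lambda}_c(u) \le \lambda_{c,k} \le \sup_{u \in K} \widetilde{\Lambda}_c(u)$. In cases $(i)$ and $(ii)$ the right-hand side tends to $- \infty$ by Lemma \ref{Lemma 4} $(i)$--$(ii)$, and in cases $(iii)$ and $(iv)$ the left-hand side tends to $+ \infty$ by Lemma \ref{Lemma 4} $(iii)$--$(iv)$, giving the first limit in each item.

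For the limits equal to $\lambda_k$, the first step is an exact comparison. Using \eqref{32} to eliminate the surviving potential (for instance $t_c(u)^{-(s-q)}\, F(u) = - \frac{cs}{s-q}\, t_c(u)^{-s}$ when $G \equiv 0$), the formula \eqref{37} collapses to $\widetilde{\Lambda}_c(u) = \widetilde{\Psi}(u)\, \big[1 + \frac{q}{s-q}\, c\, t_c(u)^{-s}\big]$ in the $G \equiv 0$ cases, and to the analogous identity with $\frac{r}{r-s}$ in place of $\frac{q}{s-q}$ in the $F \equiv 0$ cases. Solving \eqref{32} for $t_c(u)$ shows the bracketed correction has the form $1 \mp \text{(const)}\, |c|^{1-s/q}\, F(u)^{s/q}$ (respectively with exponent $1 - s/r$ and $G(u)^{s/r}$), which tends to $1$ as $c$ approaches the relevant endpoint of $I$ because $r > s > q > 0$. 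The sign is fixed by the sign of $c$: in cases $(i)$, $(ii)$ it yields $\widetilde{\Lambda}_c < \widetilde{\Psi}$, hence $\lambda_{c,k} \le \lambda_k$, and in cases $(iii)$, $(iv)$ it yields $\widetilde{\Lambda}_c > \widetilde{\Psi}$, hence $\lambda_{c,k} \ge \lambda_k$. These are the easy halves.

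The remaining halves require uniform control and are where I expect the main obstacle, because they are a lower (resp.\ upper) bound on an inf-sup taken over all of $\F_k$, not over sets confined to a fixed sublevel set. The key uniform estimate is that on any $\widetilde{\Psi}^T = \set{u \in \M_s : J_s(u) \ge 1/T}$ the correction is bounded by some $\delta(c) \to 0$, since $J_s$ is bounded below and $F, G$ are bounded on $\M_s$ there; thus $|\widetilde{\Lambda}_c - \widetilde{\Psi}| \le \delta(c)$ on $\widetilde{\Psi}^T$. In cases $(iii)$, $(iv)$ I would use Lemma \ref{Lemma 5} to pick, for each $\eps > 0$, a near-optimal $M_\eps \in \F_k$ with $M_\eps \subset \widetilde{\Psi}^{\lambda_k + \eps}$ and $\sup_{M_\eps} \widetilde{\Psi} < \lambda_k + \eps$; uniform convergence on this fixed sublevel set gives $\lambda_{c,k} \le \sup_{M_\eps} \widetilde{\Lambda}_c \le \lambda_k + \eps + \delta(c)$, so $\limsup \lambda_{c,k} \le \lambda_k$.

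Cases $(i)$, $(ii)$ are the genuine difficulty, and I would handle them by a trapping argument. The uniform smallness of the correction forces the bracket to exceed $1/2$ on all of $\M_s$ once $c$ is close enough to the endpoint, so $\widetilde{\Lambda}_c \ge \tfrac{1}{2}\, \widetilde{\Psi}$ there. Consequently, any $M \in \F_k$ with $\sup_M \widetilde{\Lambda}_c < \lambda_k$ must satisfy $\widetilde{\Psi}(u) < 2 \lambda_k$ for all $u \in M$, i.e.\ $M \subset \widetilde{\Psi}^{2\lambda_k}$; on this fixed sublevel set $\widetilde{\Lambda}_c \ge \widetilde{\Psi} - \delta(c)$, so $\sup_M \widetilde{\Lambda}_c \ge \sup_M \widetilde{\Psi} - \delta(c) \ge \lambda_k - \delta(c)$, while for the remaining $M$ the bound $\sup_M \widetilde{\Lambda}_c \ge \lambda_k \ge \lambda_k - \delta(c)$ is trivial. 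Taking the infimum over $M \in \F_k$ gives $\lambda_{c,k} \ge \lambda_k - \delta(c)$, whence $\liminf \lambda_{c,k} \ge \lambda_k$. Combining the two halves yields $\lim \lambda_{c,k} = \lambda_k$ in each case, and the monotonicity from Lemma \ref{Lemma 1} gives a consistency check on the direction of each limit.
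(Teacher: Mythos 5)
Your proposal is correct, and its overall architecture coincides with the paper's: the four infinite limits come from Lemma \ref{Lemma 4} exactly as in the paper (upper bound via a compact $K \in \F_k$ in cases $(i)$--$(ii)$, lower bound via $\inf_{\M_s} \widetilde{\Lambda}_c$ in cases $(iii)$--$(iv)$), and your multiplicative identities $\widetilde{\Lambda}_c = \widetilde{\Psi}\,[1 + \tfrac{q}{s-q}\, c\, t_c(u)^{-s}]$ (resp.\ $1 - \tfrac{r}{r-s}\, c\, t_c(u)^{-s}$) are algebraically equivalent to the paper's \eqref{42}, \eqref{44}, \eqref{46}, \eqref{48}, yielding the same easy one-sided bounds $\lambda_{c,k} \le \lambda_k$ or $\lambda_{c,k} \ge \lambda_k$. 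For the hard halves in cases $(iii)$--$(iv)$ your near-optimal $M_\eps$ argument is the same as the paper's use of Lemma \ref{Lemma 5} together with the additive estimate on $\widetilde{\Psi}^T$. The genuine difference is in the hard halves of cases $(i)$--$(ii)$: the paper localizes the minimax to $\F_{b,T,k}$, the symmetric subsets of the sublevel set $\widetilde{\Lambda}_b^T$, which requires Lemma \ref{Lemma 2} (resting on the monotonicity of Lemma \ref{Lemma 1}) and the bound $J_s \ge C^{-1}$ on $\widetilde{\Lambda}_b^T$ from Lemma \ref{Lemma 3} $(i)$; you instead exploit the fact that $F$, $G$ are bounded on the bounded manifold $\M_s$ to get the uniform multiplicative bound $\widetilde{\Lambda}_c \ge \tfrac{1}{2}\, \widetilde{\Psi}$ on all of $\M_s$ once $c$ is near the relevant endpoint, which traps every competitor $M$ with $\sup_M \widetilde{\Lambda}_c < \lambda_k$ inside $\widetilde{\Psi}^{2\lambda_k}$, where the additive correction is $\O(|c|^{-(s-q)/q})$ (resp.\ $\O(|c|^{(r-s)/r})$). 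Your trapping device is sound (the sign and uniformity checks go through because $\lambda_k \ge \lambda_1 > 0$ and $\sup_M \widetilde{\Psi} \ge \lambda_k$ for every $M \in \F_k$), and it buys self-containedness: Lemmas \ref{Lemma 1}--\ref{Lemma 3} are not needed for Proposition \ref{Proposition 2} at all, whereas the paper reuses that machinery since it is needed anyway for Proposition \ref{Proposition 1}. A further small addition on your side is the explicit construction of a compact $K \in \F_k$ as $\M_s \cap V$ with $\dim V = k$ (legitimate, since $I_s$ is strictly increasing along rays by $(A_6)$, so the radial projection is an odd homeomorphism from the unit sphere of $V$), a point the paper leaves implicit.
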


\begin{proof}
$(i)$ Let $K \in \F_k$ be a compact set. Then $\lambda_{c,k} \le \ds{\sup_{u \in K}}\, \widetilde{\Lambda}_c(u)$, so the first limit follows from Lemma \ref{Lemma 4} $(i)$. If, in addition, $G = 0$, then \eqref{41} together with \eqref{32} gives
\begin{equation} \label{42}
\widetilde{\Lambda}_c(u) = \widetilde{\Psi}(u) - \frac{q}{s}\, \left(\frac{s - q}{|c|\, s}\right)^{(s-q)/q} \frac{F(u)^{s/q}}{J_s(u)} \le \widetilde{\Psi}(u) \quad \forall u \in \M_s,
\end{equation}
so $\lambda_{c,k} \le \lambda_k$ for all $c \in I^-$ by \eqref{31} and \eqref{5}. Let $b \in I^-$ and let $T > \lambda_k$. Then $T > \lambda_{c,k}$, so
\begin{equation} \label{43}
\inf_{M \in \F_{b,T,k}}\, \sup_{u \in M}\, \widetilde{\Lambda}_c(u) = \lambda_{c,k} \quad \forall c \le b
\end{equation}
by Lemma \ref{Lemma 2}. The equality in \eqref{42} combined with Lemma \ref{Lemma 3} $(i)$ and the fact that $F$ is bounded on bounded sets gives
\[
\widetilde{\Lambda}_c(u) \ge \widetilde{\Psi}(u) - \frac{C}{|c|^{(s-q)/q}} \quad \forall u \in \widetilde{\Lambda}_b^T
\]
for some constant $C > 0$. This together with \eqref{43} gives
\[
\lambda_{c,k} \ge \inf_{M \in \F_{b,T,k}}\, \sup_{u \in M}\, \widetilde{\Psi}(u) - \frac{C}{|c|^{(s-q)/q}} \ge \lambda_k - \frac{C}{|c|^{(s-q)/q}} \quad \forall c \le b
\]
since $\F_{b,T,k} \subset \F_k$, and the second limit follows from this since $\lambda_{c,k} \le \lambda_k$.

$(ii)$ Let $K \in \F_k$ be a compact set. Then $\lambda_{c,k} \le \ds{\sup_{u \in K}}\, \widetilde{\Lambda}_c(u)$, so the first limit follows from Lemma \ref{Lemma 4} $(ii)$. If, in addition, $F = 0$, then \eqref{41} together with \eqref{32} gives
\begin{equation} \label{44}
\widetilde{\Lambda}_c(u) = \widetilde{\Psi}(u) - \frac{r}{s}\, \left(\frac{cs}{r - s}\right)^{(r-s)/r} \frac{G(u)^{s/r}}{J_s(u)} \le \widetilde{\Psi}(u) \quad \forall u \in \M_s,
\end{equation}
so $\lambda_{c,k} \le \lambda_k$ for all $c \in I^+$ by \eqref{31} and \eqref{5}. Let $b \in I^+$ and let $T > \lambda_k$. Then $T > \lambda_{c,k}$, so
\begin{equation} \label{45}
\inf_{M \in \F_{b,T,k}}\, \sup_{u \in M}\, \widetilde{\Lambda}_c(u) = \lambda_{c,k} \quad \forall c \in (0,b]
\end{equation}
by Lemma \ref{Lemma 2}. The equality in \eqref{44} combined with Lemma \ref{Lemma 3} $(i)$ and the fact that $G$ is bounded on bounded sets gives
\[
\widetilde{\Lambda}_c(u) \ge \widetilde{\Psi}(u) - C\, |c|^{(r-s)/r} \quad \forall u \in \widetilde{\Lambda}_b^T
\]
for some constant $C > 0$. This together with \eqref{45} gives
\[
\lambda_{c,k} \ge \inf_{M \in \F_{b,T,k}}\, \sup_{u \in M}\, \widetilde{\Psi}(u) - C\, |c|^{(r-s)/r} \ge \lambda_k - C\, |c|^{(r-s)/r} \quad \forall c \in (0,b]
\]
since $\F_{b,T,k} \subset \F_k$, and the second limit follows from this since $\lambda_{c,k} \le \lambda_k$.

$(iii)$ Since $\lambda_{c,k} \ge \ds{\inf_{u \in \M_s}}\, \widetilde{\Lambda}_c(u)$, the first limit follows from Lemma \ref{Lemma 4} $(iii)$. If, in addition, $G = 0$, then \eqref{41} together with \eqref{32} gives
\begin{equation} \label{46}
\widetilde{\Lambda}_c(u) = \widetilde{\Psi}(u) \left[1 + \frac{q}{s}\, \left(\frac{s - q}{cs}\right)^{(s-q)/q} |F(u)|^{s/q}\right] \ge \widetilde{\Psi}(u) \quad \forall u \in \M_s,
\end{equation}
so $\lambda_{c,k} \ge \lambda_k$ for all $c \in I^+$ by \eqref{31} and \eqref{5}. Let $T > \lambda_k$. Then
\begin{equation} \label{47}
\inf_{M \in \F_{T,k}}\, \sup_{u \in M}\, \widetilde{\Psi}(u) = \lambda_k
\end{equation}
by Lemma \ref{Lemma 5}. The equality in \eqref{46} combined with the fact that $F$ is bounded on bounded sets gives
\[
\widetilde{\Lambda}_c(u) \le \widetilde{\Psi}(u) + \frac{C}{|c|^{(s-q)/q}} \quad \forall u \in \widetilde{\Psi}^T
\]
for some constant $C > 0$. Since $\F_k \supset \F_{T,k}$, this together with \eqref{47} gives
\[
\lambda_{c,k} \le \inf_{M \in \F_{T,k}}\, \sup_{u \in M}\, \widetilde{\Lambda}_c(u) \le \lambda_k + \frac{C}{|c|^{(s-q)/q}} \quad \forall c \in I^+,
\]
and the second limit follows from this since $\lambda_{c,k} \ge \lambda_k$.

$(iv)$ Since $\lambda_{c,k} \ge \ds{\inf_{u \in \M_s}}\, \widetilde{\Lambda}_c(u)$, the first limit follows from Lemma \ref{Lemma 4} $(iv)$. If, in addition, $F = 0$, then \eqref{41} together with \eqref{32} gives
\begin{equation} \label{48}
\widetilde{\Lambda}_c(u) = \widetilde{\Psi}(u) \left[1 + \frac{r}{s}\, \left(\frac{|c|\, s}{r - s}\right)^{(r-s)/r} |G(u)|^{s/r}\right] \ge \widetilde{\Psi}(u) \quad \forall u \in \M_s,
\end{equation}
so $\lambda_{c,k} \ge \lambda_k$ for all $c \in I^-$ by \eqref{31} and \eqref{5}. Let $T > \lambda_k$. Then
\begin{equation} \label{49}
\inf_{M \in \F_{T,k}}\, \sup_{u \in M}\, \widetilde{\Psi}(u) = \lambda_k
\end{equation}
by Lemma \ref{Lemma 5}. The equality in \eqref{48} combined with the fact that $G$ is bounded on bounded sets gives
\[
\widetilde{\Lambda}_c(u) \le \widetilde{\Psi}(u) + C\, |c|^{(r-s)/r} \quad \forall u \in \widetilde{\Psi}^T
\]
for some constant $C > 0$. Since $\F_k \supset \F_{T,k}$, this together with \eqref{49} gives
\[
\lambda_{c,k} \le \inf_{M \in \F_{T,k}}\, \sup_{u \in M}\, \widetilde{\Lambda}_c(u) \le \lambda_k + C\, |c|^{(r-s)/r} \quad \forall c \in I^-,
\]
and the second limit follows from this since $\lambda_{c,k} \ge \lambda_k$.
\end{proof}

\subsection{Main results}

In this section we prove our main results for the equation \eqref{18}. The first and the second parts of each of the following theorems are immediate from Proposition \ref{Proposition 3}, Lemma \ref{Lemma 6}, Proposition \ref{Proposition 7}, Proposition \ref{Proposition 1}, and Proposition \ref{Proposition 2}, so we will only give the proof of the third part. These theorems are illustrated in the figures below. For each point of intersection of the line $L_\lambda = \set{(\lambda,c) : c \in I}$ with one of the curves $C_k = \set{(\lambda_{c,k},c) : c \in I},\, k \ge 1$, $\Phi_{\lambda_{c,k}}$ has a nontrivial critical point $u_{c,k}$ with critical value $c$. No nontrivial critical point exists below the first curve in each figure.

\begin{theorem} \label{Theorem 1}
Assume $(A_1)$--$(A_{11})$, $(H_1)$--$(H_3)$, $(H_4)(i)$, and $(H_5)$.
\begin{enumroman}
\item For each $c \in I^-$, $\lambda_{c,k} \nearrow \infty$ is a sequence of critical values of $\lambda_c$. Moreover, $\Phi_{\lambda_{c,k}}$ has a nontrivial critical point $u_{c,k}$ with $\Phi_{\lambda_{c,k}}(u_{c,k}) = c$.
\item For each $k \ge 1$, the mapping $I^- \to \R,\, c \mapsto \lambda_{c,k}$ is continuous, decreasing, and satisfies
    \[
    \lim_{c \to 0^-}\, \lambda_{c,k} = - \infty, \qquad \lim_{c \to - \infty}\, \lambda_{c,k} = \lambda_k.
    \]
\item For each $\lambda \in \R$, $\Phi_\lambda$ has a sequence of nontrivial critical points $\seq{v_{\lambda,k}}$ such that $\Phi_\lambda(v_{\lambda,k}) \nearrow 0$ and $\norm{v_{\lambda,k}} \to 0$. Moreover, $\Phi_\lambda$ has no nontrivial critical point with critical value in $\R \setminus I^-$.
\end{enumroman}
\end{theorem}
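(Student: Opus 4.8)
The plan is to fix $\lambda \in \R$ and obtain the critical points $v_{\lambda,k}$ as the intersections of the vertical line $L_\lambda$ with the curves $C_k$, and then let $k \to \infty$. By part $(ii)$, for each $k \ge 1$ the map $c \mapsto \lambda_{c,k}$ is a continuous, strictly decreasing bijection of $I^- = (-\infty,0)$ onto $(-\infty,\lambda_k)$, since $\lim_{c \to 0^-} \lambda_{c,k} = -\infty$ and $\lim_{c \to -\infty} \lambda_{c,k} = \lambda_k$. Hence $\lambda_{c,k} = \lambda$ has a unique solution $c_k \in I^-$ exactly when $\lambda < \lambda_k$, and since $\lambda_k \nearrow \infty$ by Theorem \ref{Theorem 7} this holds for all $k \ge k_0 = k_0(\lambda)$. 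For each such $k$, Proposition \ref{Proposition 3} provides a critical point $u_{c_k,k}$ of $\Lambda_{c_k}$ at the minimax level $\lambda_{c_k,k} = \lambda$; this is a critical point of $\lambda_{c_k}$ by Lemma \ref{Lemma 6}, hence, by Proposition \ref{Proposition 7}, a critical point $v_{\lambda,k} := u_{c_k,k}$ of $\Phi_\lambda$ with $\Phi_\lambda(v_{\lambda,k}) = c_k$.

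Next I would show $c_k \nearrow 0$. Since $\F_{c,k+1} \subset \F_{c,k}$, the value $\lambda_{c,k}$ is nondecreasing in $k$ for fixed $c$; combining this with $\lambda_{c_{k+1},k} \le \lambda_{c_{k+1},k+1} = \lambda = \lambda_{c_k,k}$ and the strict monotonicity of $c \mapsto \lambda_{c,k}$ forces $c_{k+1} \ge c_k$. Thus $\seq{c_k}$ is nondecreasing and bounded above by $0$, so $c_k \to c_* \le 0$. If $c_* < 0$, then $c_k \le c_*$ gives $\lambda = \lambda_{c_k,k} \ge \lambda_{c_*,k} \to \infty$ by Proposition \ref{Proposition 3}, a contradiction; hence $c_* = 0$ and $\Phi_\lambda(v_{\lambda,k}) = c_k \nearrow 0$.

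The main obstacle is proving $\norm{v_{\lambda,k}} \to 0$. Since $v_{\lambda,k} \in \N_{c_k}$ by Lemma \ref{Lemma 8}, \eqref{23} with $G = 0$ gives $F(v_{\lambda,k}) = s|c_k|/(s-q) \to 0$, and $\Phi_\lambda(v_{\lambda,k}) = c_k$ then yields $I_s(v_{\lambda,k}) - \lambda J_s(v_{\lambda,k}) = c_k + F(v_{\lambda,k}) = q|c_k|/(s-q) \to 0$. Writing $v_{\lambda,k} = (\widetilde{u}_k)_{\widetilde{t}_k}$ with $\widetilde{u}_k = \pi(v_{\lambda,k}) \in \M_s$ and $\widetilde{t}_k = I_s(v_{\lambda,k})^{1/s}$ (see \eqref{10}, \eqref{15}), the scaling relations \eqref{3} and \eqref{19} recast these as $\widetilde{t}_k^{\,q}\, F(\widetilde{u}_k) = s|c_k|/(s-q)$ and $\widetilde{t}_k^{\,s}\,[1 - \lambda J_s(\widetilde{u}_k)] = q|c_k|/(s-q)$. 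I claim $\widetilde{t}_k \to 0$: otherwise, along a subsequence $\widetilde{t}_k \ge \delta > 0$, so the first relation forces $F(\widetilde{u}_k) \to 0$; a weak limit $\widetilde{u}_k \wto \widetilde{u}$ satisfies $F(\widetilde{u}) = 0$ by $(H_2)$, so $\widetilde{u} = 0$ by $(H_4)(i)$ and $J_s(\widetilde{u}_k) \to 0$ by $(A_9)$, whence $1 - \lambda J_s(\widetilde{u}_k) \to 1$ and the second relation gives $\widetilde{t}_k \to 0$, a contradiction. Thus $I_s(v_{\lambda,k}) = \widetilde{t}_k^{\,s} \to 0$, so $\seq{v_{\lambda,k}}$ is bounded by $(A_{10})$. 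Passing to a weak limit $v_{\lambda,k} \wto v$ and testing \eqref{18} against $v_{\lambda,k} - v$, where $\Bs(v_{\lambda,k})(v_{\lambda,k} - v) \to 0$ by $(A_9)$ and $f(v_{\lambda,k})(v_{\lambda,k} - v) \to 0$ by $(H_2)$, gives $\As(v_{\lambda,k})(v_{\lambda,k} - v) \to 0$, so $v_{\lambda,k} \to v$ strongly by $(A_7)$. Then $I_s(v) = \lim I_s(v_{\lambda,k}) = 0$ forces $v = 0$, so $\norm{v_{\lambda,k}} \to 0$.

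Finally, the nonexistence claim is immediate: by Lemma \ref{Lemma 8}, any nontrivial critical point of $\Phi_\lambda$ with critical value $c$ lies in $\N_c$, and by \eqref{23} with $G = 0$ this requires $F(u) = -cs/(s-q)$, which is $\le 0$ when $c \ge 0$ and contradicts $F(u) > 0$ for $u \ne 0$ from $(H_4)(i)$. Hence $\N_c = \emptyset$ for every $c \ge 0$, and $\Phi_\lambda$ has no nontrivial critical point with critical value in $\R \setminus I^- = [0,\infty)$.
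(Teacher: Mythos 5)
Your proposal is correct, and its skeleton coincides with the paper's: parts $(i)$ and $(ii)$ are taken as given (the paper derives them from Propositions \ref{Proposition 3}, \ref{Proposition 1}, and \ref{Proposition 2} together with Lemma \ref{Lemma 6} and Proposition \ref{Proposition 7}), the points $v_{\lambda,k}$ come from intersecting $L_\lambda$ with the curves $C_k$ and invoking Proposition \ref{Proposition 3}, Lemma \ref{Lemma 6}, and Proposition \ref{Proposition 7}, your monotonicity argument for $c_k \nearrow 0$ is the same as the paper's (monotonicity of $\lambda_{c,k}$ in $k$ plus divergence of $\lambda_{c,k}$ for fixed $c$), and the nonexistence claim is handled identically via Lemma \ref{Lemma 8} and \eqref{23}. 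Where you genuinely depart from the paper is the compactness step $\norm{v_{\lambda,k}} \to 0$. The paper first gets boundedness of $\seq{v_{\lambda,k}}$ from the uniform coercivity of $\Lambda_c$ (Lemma \ref{Lemma 9}, using $\Lambda_{c_k}(v_{\lambda,k}) = \lambda$), identifies the weak limit as $0$ from $F(v_{\lambda,k}) \to 0$ and $(H_4)(i)$, and only then applies $\Phi_\lambda'(v_{\lambda,k}) = 0$ to $v_{\lambda,k}$ itself, concluding with $(A_7)$. You never use Lemma \ref{Lemma 9}: you combine the constraint \eqref{23} with the prescribed energy $\Phi_\lambda(v_{\lambda,k}) = c_k$ and the scaled decomposition $v_{\lambda,k} = (\widetilde{u}_k)_{\widetilde{t}_k}$ to prove $I_s(v_{\lambda,k}) = \widetilde{t}_k^{\,s} \to 0$, which yields boundedness directly from $(A_{10})$; strong convergence then comes from testing $\Phi_\lambda'(v_{\lambda,k}) = 0$ against $v_{\lambda,k} - v$ and $(A_7)$, and the limit is identified as $0$ a posteriori from $I_s(v) = 0$ and \eqref{4}. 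Both routes are sound and of comparable length; yours buys a quantitatively stronger intermediate conclusion ($I_s(v_{\lambda,k}) \to 0$ before any compactness tool is invoked) and independence from Lemma \ref{Lemma 9}, while the paper's is slightly leaner because identifying the weak limit as $0$ first lets it test against $v_{\lambda,k}$ rather than $v_{\lambda,k} - v$. (In both versions the final convergences are along subsequences, which is harmless since the theorem only asserts the existence of such a sequence.)
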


\begin{proof}
By $(ii)$, for all sufficiently large $k$, $L_\lambda \cap C_k \ne \emptyset$, i.e., there is a $c_k \in I^-$ such that $\lambda_{c_k,k} = \lambda$. So $\Lambda_{c_k}$ has a critical point $v_{\lambda,k}$ with critical value $\lambda$ by Proposition \ref{Proposition 3}. Then $v_{\lambda,k}$ is also a critical point of $\lambda_{c_k}$ with critical value $\lambda$ by Lemma \ref{Lemma 6} and hence it is a nontrivial critical point of $\Phi_\lambda$ with critical value $c_k$ by Proposition \ref{Proposition 7}. For any $c_0 \in I^-$, for all sufficiently larger $k$,
\[
\lambda_{c_0,k} > \lambda = \lambda_{c_k,k} = \lambda_{c_{k+1},{k+1}} \ge \lambda_{c_{k+1},k}
\]
since $\lambda_{c,k} \nearrow \infty$ for all $c \in I^-$. This implies that $c_{k+1} \ge c_k > c_0$ since $\lambda_{c,k}$ is decreasing in $c$, so $c_k \nearrow 0$. Since $\Lambda_{c_k}(v_{\lambda,k}) = \lambda$, then $\seq{v_{\lambda,k}}$ is bounded by Lemma \ref{Lemma 9} and hence converges weakly to some $v_\lambda \in W$ for a renamed subsequence. Then $F(v_{\lambda,k}) \to F(v_\lambda)$. On the other hand, since $v_{\lambda,k} \in \N_{c_k}$, $(s - q)\, F(v_{\lambda,k}) + c_k\, s = 0$ by \eqref{23}, so $F(v_{\lambda,k}) \to 0$. So $F(v_\lambda) = 0$ and hence $v_\lambda = 0$. Now applying $\Phi_\lambda'(v_{\lambda,k}) = 0$ to $v_{\lambda,k}$ and noting that $\Bs(v_{\lambda,k}) v_{\lambda,k} \to 0$ by $(A_9)$ and $f(v_{\lambda,k}) v_{\lambda,k} \to 0$ by $(H_2)$ (see Perera et al.\! \cite[Lemma 3.4]{MR2640827}), we have $\As(v_{\lambda,k}) v_{\lambda,k} \to 0$, so $v_{\lambda,k} \to 0$ for a further subsequence by $(A_7)$.

If $c \ge 0$, then the equation $(s - q)\, F(u) + cs = 0$ has no nontrivial solution since $F(u) > 0$ for all $u \in W \setminus \set{0}$. So $\N_c = \emptyset$ by \eqref{23} and hence $\Phi_\lambda$ has no nontrivial critical point with critical value $c$ by Lemma \ref{Lemma 8}.
\end{proof}

\begin{figure}[h!]
	\centering
	\begin{tikzpicture}[>=latex]
		%x axis
		\draw[->] (-3,0) -- (4,0) node[below] {\scalebox{0.8}{$\lambda$}};
		\foreach \x in {}
		\draw[shift={(\x,0)}] (0pt,2pt) -- (0pt,-2pt) node[below] {\footnotesize $\x$};
		%y axis
		\draw[->] (0,-2.5) -- (0,1) node[left] {\scalebox{0.8}{$c$}};
		\foreach \y in {}
		\draw[shift={(0,\y)}] (2pt,0pt) -- (-2pt,0pt) nde[left] {\footnotesize $\y$};
		%\node[below left] at (0,0) {\footnotesize $0$};
		%	\draw[red,thick] (0,-2) .. controls (0,-1.5) and (0,0) .. (4,1.5);
		\draw[blue,thick] (-2.5,-1) .. controls (0,-1.2) and (1,-1.2) .. (1.45,-2.5);
		%\draw[blue,thick] (2,0) .. controls (3,-1.4) .. (3,-1.4);
		%\draw[blue,thick,dashed] (0,1) .. controls (0,1) and (1,1) .. (2,0);
		%\draw[blue,thick] (-2.5,-.7) .. controls (0,-.9) and (1.5,-.9) .. (1.9,-2.5);
		%\draw[blue,thick] (3,0) .. controls (4,-1.4) .. (4,-1.4);
		\draw [thick] (1.5,-.1) node[above]{\scalebox{0.8}{$\lambda_{1}$}} -- (1.5,0.05); 
		\draw [dashed] (1.5,0) -- (1.5,-2.5);
		\draw [thick] (2.3,-.1) node[above]{\scalebox{0.8}{$\lambda_{2}$}} -- (2.3,0.05); 
		\draw [dashed] (2.3,0) -- (2.3,-2.5);
		%	\draw [thick] (2.5,-.1) node[below]{\scalebox{0.8}{$\mu_{3}$}} -- (2.5,0.05); 
		%				\draw [dashed] (2.5,0) -- (2.5,-2.5);
		\draw [thick] (3.3,-.1) node[above]{\scalebox{0.8}{$\lambda_{k}$}} -- (3.3,0.05); 
		\draw [dashed] (3.3,0) -- (3.3,-2.5);
		
		\draw [thick] (2.4,-2) node[above]{$\cdots$}; 
		\draw[blue,thick] (-2.5,-.5) .. controls (0,-.6) and (1.7,-.6) .. (2.25,-2.5);
		\draw[blue,thick] (-2.5,-.2) .. controls (0,-.3) and (2.5,-.3) .. (3.25,-2.5);
		
		%\draw[blue,thick] (4,0) .. controls (5,-1.4) .. (5,-1.4);
		%\draw [thick] (3.2,-1.3) node[above]{$\vdots$} -- (3.2,-1.3); 
		%\draw [thick] (-.1,.4) node[right]{$\vdots$} ;%-- (-.1,2,2);
		%\draw (.5,.35) node{$\bullet$};
		\draw [thick] (.75,0) -- (.75,-2.5);
		
		\draw (.75,-1.6) node{\scalebox{0.8}{$\bullet$}};
		\draw (.75,-.9) node{\scalebox{0.8}{$\bullet$}};
		\draw (.75,-.5) node{\scalebox{0.8}{$\bullet$}};
		%\draw (1.8,1.9) node{\scalebox{0.8}{$\bullet$}};
		%\draw (1.8,-.18) node{\scalebox{0.8}{$\bullet$}};
		%\draw (1.8,-.37) node{\scalebox{0.8}{$\bullet$}};
		%\draw (1.8,-.82) node{\scalebox{0.8}{$\bullet$}};
		\draw  (-1,-1.5) node[below]{\scalebox{1.5}{$\nexists$}}  ; 	
		\draw  (-2.8,-1.4) node[above]{\scalebox{0.8}{$\lambda_{c,1}$}} ; 	
		\draw  (-2.8,-.9) node[above]{\scalebox{0.8}{$\lambda_{c,2}$}} ; 
		%	\draw  (-2.8,0) node[above]{\scalebox{0.8}{$\mu_{3,c}^-$}} ; 
		
		\draw  (-2.8,-.5) node[above]{\scalebox{0.8}{$\lambda_{c,k}$}} ; 
		
		%\draw [thick] (-2.8,1.1) node[above]{\scalebox{0.8}{$\mu_{1,c}$}} -- (-2.8,1.2); 	
		%	\draw[blue,thick,dashed] (3,1.3) .. controls (3,1.3) and (4,1.5) .. (4,1.5);
		%\draw [thick] (1,-.1) node[below]{$\mu_1$} -- (1,0.1); 
		%\draw [thick] (2,-.05) node[above]{\scalebox{0.8}{$\mu_{1,0}$}} -- (2,0.1); 
		%\draw [thick] (3,-.05) node[below]{\scalebox{0.8}{$\mu_2$}} -- (3,0.1); 
		%\draw [thick] (4,-.05) node[below]{\scalebox{0.8}{$\mu_n$}} -- (4,.1); 
		%\draw [thick] (1.8,0) node[above left]{\scalebox{0.8}{\tiny$\overline{\mu}$}} -- (1.8,0); 
		%\draw [thick] (2.5,-0.05) node[above]{\scalebox{0.8}{$\mu^*$}} -- (2.5,0.1); 
		%	\draw [thick] (-.1,-2) node[left]{$-\infty$} -- (.1,-2); 
		%	\node[] at (1,1.5) { {\color{blue}$\Phi_\mu(w_\mu)$}};
		%	\node[] at (3,0.5) { {\color{red}$\Phi_\mu(u_\mu)$}};
		%\draw  (.8,0.1) node[above]{\scalebox{0.8}{$\Phi_\mu(u_\mu)$}} ; 
		%\draw  (2.5,-2) node[above]{\scalebox{0.8}{$\Phi_\mu(v_\mu)$}} ; 
		\draw  (.75,0) node[above]{\scalebox{0.8}{$L_\lambda$}} ;
	\end{tikzpicture}
	\caption{Energy curves for the case $F>0,\, G=0$ (see Theorem \ref{Theorem 1}).} \label{fig1}
\end{figure}

\begin{theorem} \label{Theorem 2}
Assume $(A_1)$--$(A_{11})$, $(H_1)$--$(H_3)$, $(H_4)(ii)$, and $(H_5)$.
\begin{enumroman}
\item For each $c \in I^+$, $\lambda_{c,k} \nearrow \infty$ is a sequence of critical values of $\lambda_c$. Moreover, $\Phi_{\lambda_{c,k}}$ has a nontrivial critical point $u_{c,k}$ with $\Phi_{\lambda_{c,k}}(u_{c,k}) = c$.
\item For each $k \ge 1$, the mapping $I^+ \to \R,\, c \mapsto \lambda_{c,k}$ is continuous, decreasing, and satisfies
    \[
    \lim_{c \to 0^+}\, \lambda_{c,k} = \infty, \qquad \lim_{c \to \infty}\, \lambda_{c,k} = \lambda_k.
    \]
\item For $\lambda \le \lambda_1$, $\Phi_\lambda$ has no nontrivial critical point. For each $\lambda > \lambda_k$, $\Phi_\lambda$ has at least $k$ pairs of nontrivial critical points. For all $\lambda \in \R$, $\Phi_\lambda$ has no nontrivial critical point with critical value in $\R \setminus I^+$.
\end{enumroman}
\end{theorem}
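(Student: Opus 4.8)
The plan is to deduce all three assertions of $(iii)$ from the shape of the curves $c \mapsto \lambda_{c,k}$ established in $(ii)$, together with the correspondence between critical points of $\Phi_\lambda$, of $\lambda_c$, and of $\Lambda_c$ supplied by Proposition \ref{Proposition 7} and Lemma \ref{Lemma 6}. First I would settle the two nonexistence claims. In case $(H_4)(ii)$ we have $G \equiv 0$, so by \eqref{23} the scaled Nehari manifold is $\N_c = \set{u \in W \setminus \set{0} : (s - q)\, F(u) + cs = 0}$. Since $F(u) < 0$ and $s > q$, this equation forces $cs = - (s - q)\, F(u) > 0$, whence $\N_c = \emptyset$ for every $c \le 0$; as every nontrivial critical point of $\Phi_\lambda$ with energy $c$ lies in $\N_c$ by Lemma \ref{Lemma 8}, there is no nontrivial critical point with critical value in $\R \setminus I^+$. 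For the threshold, suppose $u \ne 0$ is a critical point of $\Phi_\lambda$ with $\Phi_\lambda(u) = c$. By the previous step $c \in I^+$, and by Proposition \ref{Proposition 7} $u$ is a critical point of $\lambda_c$ with $\lambda_c(u) = \lambda$; since $u \in \N_c$ and $\lambda_{c,1} = \inf_{\N_c} \Lambda_c$ (any nonempty symmetric subset has index at least one in \eqref{50}), we get $\lambda = \Lambda_c(u) \ge \lambda_{c,1}$. But $(ii)$ shows $c \mapsto \lambda_{c,1}$ is decreasing on $I^+$ with limit $\lambda_1$ as $c \to \infty$, so $\lambda_{c,1} > \lambda_1$ and hence $\lambda > \lambda_1$, contradicting $\lambda \le \lambda_1$.

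For the multiplicity claim, fix $\lambda > \lambda_k$. By $(ii)$, for each $j \in \set{1, \dots, k}$ the map $c \mapsto \lambda_{c,j}$ is a continuous decreasing bijection of $I^+$ onto $(\lambda_j, \infty)$, and since $\lambda > \lambda_k \ge \lambda_j$ there is a unique $c_j \in I^+$ with $\lambda_{c_j,j} = \lambda$. By Proposition \ref{Proposition 3}, $\lambda$ is then a critical value of $\Lambda_{c_j}$; any critical point attaining it is a critical point of $\lambda_{c_j}$ by Lemma \ref{Lemma 6}, hence a nontrivial critical point of $\Phi_\lambda$ with energy $c_j$ by Proposition \ref{Proposition 7}. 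Because $\lambda_{c,j}$ increases in $j$ and decreases in $c$, one checks that $c_1 \le c_2 \le \dots \le c_k$, so any repeated values occur in consecutive blocks.

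The main obstacle is precisely this possible coincidence of the $c_j$, since critical points sharing a common energy are not separated automatically. I would handle it with the cohomological index estimate underlying Proposition \ref{Proposition 3} (cf.\ Perera et al.\ \cite[Proposition 3.52]{MR2640827}): if a level $c^\ast$ is hit by exactly $m$ of the indices, say $\lambda_{c^\ast,j} = \dots = \lambda_{c^\ast,j+m-1} = \lambda$, then the critical set $K$ of $\Lambda_{c^\ast}$ at level $\lambda$ satisfies $i(K) \ge m$; as a nonempty finite symmetric set has index one, $K$ must be infinite when $m \ge 2$, so in every case $K$ yields at least $m$ pairs $\pm u$ of nontrivial critical points of $\Phi_\lambda$ at energy $c^\ast$. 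Critical points at distinct energies are distinct because $\Phi_\lambda$ separates them, so since the block sizes $m$ add up to $k$, this produces at least $k$ pairs of nontrivial critical points in total, as claimed.
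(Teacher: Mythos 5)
Your proof is correct and follows essentially the same route as the paper's: nonexistence of critical values in $\R \setminus I^+$ via $\N_c = \emptyset$ for $c \le 0$ (using \eqref{23} and Lemma \ref{Lemma 8}), the threshold case $\lambda \le \lambda_1$ via $\lambda \ge \lambda_{c,1} > \lambda_1$ (since $c \mapsto \lambda_{c,1}$ is decreasing with limit $\lambda_1$), and multiplicity by intersecting the line $L_\lambda$ with the curves $C_j$, $j = 1,\dots,k$, to get energies $c_j$ with $\lambda_{c_j,j} = \lambda$. The one place you go beyond the paper is worth noting: the paper's proof produces a nontrivial critical point at each level $c_j$ and tacitly counts them as $k$ distinct pairs, whereas you explicitly handle the possibility $c_i = c_j$ by invoking the index-multiplicity statement behind Proposition \ref{Proposition 3} (if $m$ of the minimax values coincide, the critical set at that level has index at least $m$, hence is infinite when $m \ge 2$); together with your observation that points at distinct energies are automatically distinct, this closes a detail the paper leaves implicit.
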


\begin{proof}
For $\lambda \le \lambda_1$, for all $c \in I^+$,
\[
\lambda < \lambda_{c,1} = \min_{u \in \N_c}\, \Lambda_c(u) = \min_{u \in \N_c}\, \lambda_c(u)
\]
by $(ii)$ and \eqref{50}. So $\lambda_c$ has no critical point with critical value $\lambda$ and hence $\Phi_\lambda$ has no nontrivial critical point with critical value $c$ by Proposition \ref{Proposition 7}.

For each $\lambda > \lambda_k$, by $(ii)$, for $j = 1,\dots,k$, $L_\lambda \cap C_j \ne \emptyset$, i.e., there is a $c_j \in I^+$ such that $\lambda_{c_j,j} = \lambda$. Since $\lambda_{c_j,j}$ is a critical value of $\lambda_{c_j}$ by Proposition \ref{Proposition 3} and Lemma \ref{Lemma 6}, then $\Phi_\lambda$ has a nontrivial critical point with critical value $c_j$ by Proposition \ref{Proposition 7}.

If $c \le 0$, then the equation $(s - q)\, F(u) + cs = 0$ has no nontrivial solution since $F(u) < 0$ for all $u \in W \setminus \set{0}$. So $\N_c = \emptyset$ by \eqref{23} and hence $\Phi_\lambda$ has no nontrivial critical point with critical value $c$ by Lemma \ref{Lemma 8}.
\end{proof}

\begin{figure}[h!]
	\centering
	\begin{tikzpicture}[>=latex]
		%x axis
		\draw[->] (-1,0) -- (6,0) node[below] {\scalebox{0.8}{$\lambda$}};
		\foreach \x in {}
		\draw[shift={(\x,0)}] (0pt,2pt) -- (0pt,-2pt) node[below] {\footnotesize $\x$};
		%y axis
		\draw[->] (0,-1) -- (0,2.5) node[left] {\scalebox{0.8}{$c$}};
		\foreach \y in {}
		\draw[shift={(0,\y)}] (2pt,0pt) -- (-2pt,0pt) nde[left] {\footnotesize $\y$};
		%\node[below left] at (0,0) {\footnotesize $0$};
		%	\draw[red,thick] (0,-2) .. controls (0,-1.5) and (0,0) .. (4,1.5);
		\draw[blue,thick] (1.55,2.5) .. controls (2,.2) and (3,0.2) .. (6,.2);
		%\draw[blue,thick] (2,0) .. controls (3,-1.4) .. (3,-1.4);
		%\draw[blue,thick,dashed] (0,1) .. controls (0,1) and (1,1) .. (2,0);
		%\draw[blue,thick] (-2.5,-.7) .. controls (0,-.9) and (1.5,-.9) .. (1.9,-2.5);
		%\draw[blue,thick] (3,0) .. controls (4,-1.4) .. (4,-1.4);
		\draw [thick] (1.5,-.1) node[below]{\scalebox{0.8}{$\lambda_{1}$}} -- (1.5,0.05); 
		\draw [dashed] (1.5,0) -- (1.5,2.5);
		\draw [thick] (2.3,-.1) node[below]{\scalebox{0.8}{$\lambda_{2}$}} -- (2.3,0.05); 
		\draw [dashed] (2.3,0) -- (2.3,2.5);
		%	\draw [thick] (2.5,-.1) node[below]{\scalebox{0.8}{$\mu_{3}$}} -- (2.5,0.05); 
		%				\draw [dashed] (2.5,0) -- (2.5,-2.5);
		\draw [thick] (3.3,-.1) node[below]{\scalebox{0.8}{$\lambda_{k}$}} -- (3.3,0.05); 
		\draw [dashed] (3.3,0) -- (3.3,2.5);
		
		\draw [thick] (4,.5) node[above]{$\cdots$}; 
		\draw[blue,thick] (2.35,2.5) .. controls (2.8,.4) and (3.8,0.4) .. (6,.4);
		\draw[blue,thick] (3.35,2.5) .. controls (3.8,.6) and (4.8,0.6) .. (6,.6);
		
		%\draw[blue,thick] (4,0) .. controls (5,-1.4) .. (5,-1.4);
		%\draw [thick] (3.2,-1.3) node[above]{$\vdots$} -- (3.2,-1.3); 
		%\draw [thick] (-.1,.4) node[right]{$\vdots$} ;%-- (-.1,2,2);
		%\draw (.5,.35) node{$\bullet$};
		\draw [thick] (5,0) -- (5,2.5);
		\draw (5,.64) node{\scalebox{0.8}{$\bullet$}};
		\draw (5,.4) node{\scalebox{0.8}{$\bullet$}};
		\draw (5,.2) node{\scalebox{0.8}{$\bullet$}};
		%\draw (1.8,1.9) node{\scalebox{0.8}{$\bullet$}};
		%\draw (1.8,-.18) node{\scalebox{0.8}{$\bullet$}};
		%\draw (1.8,-.37) node{\scalebox{0.8}{$\bullet$}};
		%\draw (1.8,-.82) node{\scalebox{0.8}{$\bullet$}};
		\draw  (1,1) node[below]{\scalebox{1.5}{$\nexists$}}  ; 	
		\draw  (1.5,2.5) node[above]{\scalebox{0.8}{$\lambda_{c,1}$}} ; 	
		\draw  (2.3,2.5) node[above]{\scalebox{0.8}{$\lambda_{c,2}$}} ; 
		%	\draw  (-2.8,0) node[above]{\scalebox{0.8}{$\mu_{3,c}^-$}} ; 
		
		\draw  (3.3,2.5) node[above]{\scalebox{0.8}{$\lambda_{c,k}$}} ; 
		
		%\draw [thick] (-2.8,1.1) node[above]{\scalebox{0.8}{$\mu_{1,c}$}} -- (-2.8,1.2); 	
		%	\draw[blue,thick,dashed] (3,1.3) .. controls (3,1.3) and (4,1.5) .. (4,1.5);
		%\draw [thick] (1,-.1) node[below]{$\mu_1$} -- (1,0.1); 
		%\draw [thick] (2,-.05) node[above]{\scalebox{0.8}{$\mu_{1,0}$}} -- (2,0.1); 
		%\draw [thick] (3,-.05) node[below]{\scalebox{0.8}{$\mu_2$}} -- (3,0.1); 
		%\draw [thick] (4,-.05) node[below]{\scalebox{0.8}{$\mu_n$}} -- (4,.1); 
		%\draw [thick] (1.8,0) node[above left]{\scalebox{0.8}{\tiny$\overline{\mu}$}} -- (1.8,0); 
		%\draw [thick] (2.5,-0.05) node[above]{\scalebox{0.8}{$\mu^*$}} -- (2.5,0.1); 
		%	\draw [thick] (-.1,-2) node[left]{$-\infty$} -- (.1,-2); 
		%	\node[] at (1,1.5) { {\color{blue}$\Phi_\mu(w_\mu)$}};
		%	\node[] at (3,0.5) { {\color{red}$\Phi_\mu(u_\mu)$}};
		%\draw  (.8,0.1) node[above]{\scalebox{0.8}{$\Phi_\mu(u_\mu)$}} ; 
		%\draw  (2.5,-2) node[above]{\scalebox{0.8}{$\Phi_\mu(v_\mu)$}} ; 
		\draw  (5,0) node[below]{\scalebox{0.8}{$L_\lambda$}} ;
	\end{tikzpicture}
	\caption{Energy curves for the case $F<0,\, G=0$ (see Theorem \ref{Theorem 2}).} \label{fig2}
\end{figure}
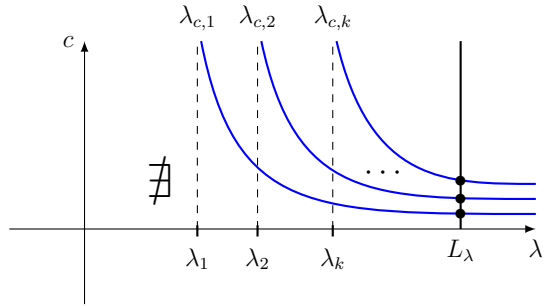

\begin{theorem} \label{Theorem 3}
Assume $(A_1)$--$(A_{11})$, $(H_1)$--$(H_3)$, $(H_4)(iii)$, and $(H_5)$.
\begin{enumroman}
\item For each $c \in I^+$, $\lambda_{c,k} \nearrow \infty$ is a sequence of critical values of $\lambda_c$. Moreover, $\Phi_{\lambda_{c,k}}$ has a nontrivial critical point $u_{c,k}$ with $\Phi_{\lambda_{c,k}}(u_{c,k}) = c$.
\item For each $k \ge 1$, the mapping $I^+ \to \R,\, c \mapsto \lambda_{c,k}$ is continuous, decreasing, and satisfies
    \[
    \lim_{c \to 0^+}\, \lambda_{c,k} = \lambda_k, \qquad \lim_{c \to \infty}\, \lambda_{c,k} = - \infty.
    \]
\item For each $\lambda \in \R$, $\Phi_\lambda$ has a sequence of nontrivial critical points $\seq{v_{\lambda,k}}$ such that $\Phi_\lambda(v_{\lambda,k}) \nearrow \infty$ and $\norm{v_{\lambda,k}} \to \infty$. Moreover, $\Phi_\lambda$ has no nontrivial critical point with critical value in $\R \setminus I^+$.
\end{enumroman}
\end{theorem}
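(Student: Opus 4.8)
The plan is to obtain the critical points by intersecting the horizontal line $L_\lambda = \set{(\lambda,c) : c \in I^+}$ with the curves $C_k$, exactly as in the proof of Theorem \ref{Theorem 1}, but now exploiting the \emph{reversed} limits in part $(ii)$. Fix $\lambda \in \R$. Since $\lambda_k \nearrow \infty$, for all sufficiently large $k$ we have $\lambda < \lambda_k$. By part $(ii)$, the map $I^+ \to \R,\, c \mapsto \lambda_{c,k}$ is continuous and decreasing with $\lim_{c \to 0^+} \lambda_{c,k} = \lambda_k$ and $\lim_{c \to \infty} \lambda_{c,k} = - \infty$, so the intermediate value theorem yields a unique $c_k \in I^+$ with $\lambda_{c_k,k} = \lambda$, i.e. $L_\lambda \cap C_k \ne \emptyset$. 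By Proposition \ref{Proposition 3}, $\lambda = \lambda_{c_k,k}$ is a critical value of $\Lambda_{c_k}$, attained at some $v_{\lambda,k} \in \N_{c_k}$; by Lemma \ref{Lemma 6} this $v_{\lambda,k}$ is a critical point of $\lambda_{c_k}$, and by Proposition \ref{Proposition 7} it is then a nontrivial critical point of $\Phi_\lambda$ with $\Phi_\lambda(v_{\lambda,k}) = c_k$.

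Next I would show $c_k \nearrow \infty$ by the same two-sided monotonicity bookkeeping used for Theorem \ref{Theorem 1}. Monotonicity of the minimax values in $k$ (since $\F_{c,k+1} \subset \F_{c,k}$) gives $\lambda_{c_{k+1},k} \le \lambda_{c_{k+1},k+1} = \lambda = \lambda_{c_k,k}$, and since $\lambda_{\cdot,k}$ is decreasing this forces $c_{k+1} \ge c_k$, so $\seq{c_k}$ is nondecreasing. For the divergence, fix any $c_0 \in I^+$; since $\lambda_{c_0,k} \nearrow \infty$ as $k \to \infty$ by Proposition \ref{Proposition 3}, for all large $k$ we have $\lambda_{c_0,k} > \lambda = \lambda_{c_k,k}$, whence $c_k > c_0$ again because $\lambda_{\cdot,k}$ is decreasing. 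As $c_0 \in I^+$ is arbitrary, $c_k \to \infty$, and therefore $\Phi_\lambda(v_{\lambda,k}) = c_k \nearrow \infty$.

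The norm blow-up $\norm{v_{\lambda,k}} \to \infty$ is then immediate, and this is where the argument is actually simpler than in Theorem \ref{Theorem 1}: since $I_s$, $J_s$, $F$, and $G$ are all bounded on bounded sets, so is $\Phi_\lambda$; if $\seq{v_{\lambda,k}}$ had a subsequence bounded in norm, $\Phi_\lambda$ would be bounded along it, contradicting $\Phi_\lambda(v_{\lambda,k}) = c_k \to \infty$. Thus no subsequence stays bounded and $\norm{v_{\lambda,k}} \to \infty$. In contrast to the subscaled case, no appeal to $(A_7)$ and strong convergence is needed here, because divergent energy alone forces divergent norm.

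Finally, for the nonexistence statement I would argue directly from the scaled Nehari set. In case $(H_4)(iii)$ we have $F \equiv 0$, so by \eqref{23} the constraint defining $\N_c$ reduces to $(r - s)\, G(u) = cs$; since $G(u) > 0$ for $u \ne 0$ and $r > s > 0$, the left-hand side is strictly positive, forcing $c > 0$. Hence $\N_c = \emptyset$ for every $c \le 0$, and by Lemma \ref{Lemma 8} every nontrivial critical point of $\Phi_\lambda$ with energy $c$ lies in $\N_c$, so $\Phi_\lambda$ has no nontrivial critical point with critical value in $\R \setminus I^+$. The only genuinely delicate point in the whole argument is the double-monotonicity bookkeeping pinning down $c_k \nearrow \infty$; the rest is a routine transcription of the Theorem \ref{Theorem 1} scheme with the limits reversed.
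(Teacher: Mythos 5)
Your proposal is correct and follows essentially the same route as the paper: intersect $L_\lambda$ with the curves $C_k$ via part $(ii)$, pass from $\Lambda_{c_k}$ to $\lambda_{c_k}$ to $\Phi_\lambda$ via Proposition \ref{Proposition 3}, Lemma \ref{Lemma 6}, and Proposition \ref{Proposition 7}, get $\norm{v_{\lambda,k}} \to \infty$ from boundedness of $\Phi_\lambda$ on bounded sets, and get nonexistence for $c \le 0$ from $\N_c = \emptyset$ and Lemma \ref{Lemma 8}. The only difference is that you spell out the monotonicity bookkeeping for $c_k \nearrow \infty$, which the paper compresses into ``an argument similar to that in the proof of Theorem \ref{Theorem 1},'' and your accounting (using $\F_{c,k+1} \subset \F_{c,k}$ and the strict decrease of $c \mapsto \lambda_{c,k}$) matches that argument exactly.
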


\begin{proof}
By $(ii)$, for all sufficiently large $k$, $L_\lambda \cap C_k \ne \emptyset$, i.e., there is a $c_k \in I^+$ such that $\lambda_{c_k,k} = \lambda$. Since $\lambda_{c_k,k}$ is a critical value of $\lambda_{c_k}$ by Proposition \ref{Proposition 3} and Lemma \ref{Lemma 6}, then $\Phi_\lambda$ has a nontrivial critical point $v_{\lambda,k}$ with $\Phi_\lambda(v_{\lambda,k}) = c_k$ by Proposition \ref{Proposition 7}. An argument similar to that in the proof of Theorem \ref{Theorem 1} shows that $c_k \nearrow \infty$. Since $\Phi_\lambda$ is bounded on bounded sets, then $\norm{v_{\lambda,k}} \to \infty$.

If $c \le 0$, then the equation $- (r - s)\, G(u) + cs = 0$ has no nontrivial solution since $G(u) > 0$ for all $u \in W \setminus \set{0}$. So $\N_c = \emptyset$ by \eqref{23} and hence $\Phi_\lambda$ has no nontrivial critical point with critical value $c$ by Lemma \ref{Lemma 8}.
\end{proof}

\begin{figure}[H]
	\centering
	\begin{tikzpicture}[>=latex]
		%x axis
		\draw[->] (-1,0) -- (6,0) node[below] {\scalebox{0.8}{$\lambda$}};
		\foreach \x in {}
		\draw[shift={(\x,0)}] (0pt,2pt) -- (0pt,-2pt) node[below] {\footnotesize $\x$};
		%y axis
		\draw[->] (0,-.5) -- (0,3) node[left] {\scalebox{0.8}{$c$}};
		\foreach \y in {}
		\draw[shift={(0,\y)}] (2pt,0pt) -- (-2pt,0pt) node[left] {\footnotesize $\y$};
		%\node[below left] at (0,0) {\footnotesize $0$};
		%	\draw[red,thick] (0,-2) .. controls (0,-1.5) and (0,0) .. (4,1.5);
		\draw[blue,thick] (-2.5,3) .. controls (-1,0) and (1,0) .. (1,0);
		%\draw[blue,thick] (2,0) .. controls (3,-1.4) .. (3,-1.4);
		%\draw[blue,thick,dashed] (0,1) .. controls (0,1) and (1,1) .. (2,0);
		\draw[blue,thick] (-1.7,3) .. controls (0,0) and (1.5,0) .. (2,0);
		%\draw[blue,thick] (3,0) .. controls (4,-1.4) .. (4,-1.4);
		\draw [thick] (1,-.1) node[below]{\scalebox{0.8}{$\lambda_{1}$}} -- (1,0.05); 
		\draw [thick] (-.4,0) -- (-.4,3);
		
		\draw [thick] (2,-.1) node[below]{\scalebox{0.8}{$\lambda_{2}$}} -- (2,0.05); 
		\draw [thick] (3,-.1) node[below]{\scalebox{0.8}{$\lambda_{3}$}} -- (3,0.05); 
		\draw [thick] (4,-.1) node[below]{\scalebox{0.8}{$\lambda_{k}$}} -- (4,0.05); 
		
		\draw [thick] (.9,1) node[above]{$\cdots$}; 
		\draw[blue,thick] (-0.9,3) .. controls (1,0) and (2,0) .. (3,0);
		\draw[blue,thick] (-0.1,3) .. controls (1.8,0) and (3,0) .. (4,0);
		
		%\draw[blue,thick] (4,0) .. controls (5,-1.4) .. (5,-1.4);
		%\draw [thick] (3.2,-1.3) node[above]{$\vdots$} -- (3.2,-1.3); 
		\draw [thick] (-.1,2.3) node[right]{$\vdots$} ;%-- (-.1,2,2);
		%\draw (.5,.35) node{$\bullet$};
		\draw (-.4,.5) node{\scalebox{0.8}{$\bullet$}};
		\draw (-.4,1.15) node{\scalebox{0.8}{$\bullet$}};
		\draw (-.4,2.25) node{\scalebox{0.8}{$\bullet$}};
		%\draw (1.8,-.18) node{\scalebox{0.8}{$\bullet$}};
		%\draw (1.8,-.37) node{\scalebox{0.8}{$\bullet$}};
		%\draw (1.8,-.82) node{\scalebox{0.8}{$\bullet$}};
		\draw  (-2,.1) node[above]{\scalebox{1.5}{$\nexists$}}  ; 	
		\draw  (-2.8,2.5) node[above]{\scalebox{0.8}{$\lambda_{c,1}$}} ; 	
		\draw  (-1.9,2.5) node[above]{\scalebox{0.8}{$\lambda_{c,2}$}} ; 
		\draw  (-1.1,2.5) node[above]{\scalebox{0.8}{$\lambda_{c,3}$}} ; 
		
		\draw  (.4,2.5) node[above]{\scalebox{0.8}{$\lambda_{c,k}$}} ; 
		
		%\draw [thick] (-2.8,1.1) node[above]{\scalebox{0.8}{$\mu_{1,c}$}} -- (-2.8,1.2); 	
		%	\draw[blue,thick,dashed] (3,1.3) .. controls (3,1.3) and (4,1.5) .. (4,1.5);
		%\draw [thick] (1,-.1) node[below]{$\mu_1$} -- (1,0.1); 
		%\draw [thick] (2,-.05) node[above]{\scalebox{0.8}{$\mu_{1,0}$}} -- (2,0.1); 
		%\draw [thick] (3,-.05) node[below]{\scalebox{0.8}{$\mu_2$}} -- (3,0.1); 
		%\draw [thick] (4,-.05) node[below]{\scalebox{0.8}{$\mu_n$}} -- (4,.1); 
		%\draw [thick] (1.8,0) node[above left]{\scalebox{0.8}{\tiny$\overline{\mu}$}} -- (1.8,0); 
		%\draw [thick] (2.5,-0.05) node[above]{\scalebox{0.8}{$\mu^*$}} -- (2.5,0.1); 
		%	\draw [thick] (-.1,-2) node[left]{$-\infty$} -- (.1,-2); 
		%	\node[] at (1,1.5) { {\color{blue}$\Phi_\mu(w_\mu)$}};
		%	\node[] at (3,0.5) { {\color{red}$\Phi_\mu(u_\mu)$}};
		%\draw  (.8,0.1) node[above]{\scalebox{0.8}{$\Phi_\mu(u_\mu)$}} ; 
		%\draw  (2.5,-2) node[above]{\scalebox{0.8}{$\Phi_\mu(v_\mu)$}} ; 
		\draw  (-.5,0) node[below]{\scalebox{0.8}{$L_\lambda$}} ;
	\end{tikzpicture}
	\caption{Energy curves for the case $F=0,\, G>0$ (see Theorem \ref{Theorem 3}).} \label{fig3}
\end{figure}

\begin{theorem} \label{Theorem 4}
Assume $(A_1)$--$(A_{11})$, $(H_1)$--$(H_3)$, $(H_4)(iv)$, and $(H_5)$.
\begin{enumroman}
\item For each $c \in I^-$, $\lambda_{c,k} \nearrow \infty$ is a sequence of critical values of $\lambda_c$. Moreover, $\Phi_{\lambda_{c,k}}$ has a nontrivial critical point $u_{c,k}$ with $\Phi_{\lambda_{c,k}}(u_{c,k}) = c$.
\item For each $k \ge 1$, the mapping $I^- \to \R,\, c \mapsto \lambda_{c,k}$ is continuous, decreasing, and satisfies
    \[
    \lim_{c \to 0^-}\, \lambda_{c,k} = \lambda_k, \qquad \lim_{c \to - \infty}\, \lambda_{c,k} = \infty.
    \]
\item For $\lambda \le \lambda_1$, $\Phi_\lambda$ has no nontrivial critical point. For each $\lambda > \lambda_k$, $\Phi_\lambda$ has at least $k$ pairs of nontrivial critical points. For all $\lambda \in \R$, $\Phi_\lambda$ has no nontrivial critical point with critical value in $\R \setminus I^-$.
\end{enumroman}
\end{theorem}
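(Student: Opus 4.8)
The first two parts follow at once from Proposition \ref{Proposition 3}, Lemma \ref{Lemma 6}, Proposition \ref{Proposition 7}, Proposition \ref{Proposition 1}, and Proposition \ref{Proposition 2}$(iv)$, exactly as in the earlier theorems, so the plan is to prove only part $(iii)$, adapting the argument given for Theorem \ref{Theorem 2}$(iii)$ to the present case $(H_4)(iv)$, in which $F = 0$, $G < 0$ on $W \setminus \set{0}$, and the relevant energy interval is $I^-$. Throughout I would use that, by part $(ii)$, each map $I^- \to \R,\, c \mapsto \lambda_{c,j}$ is continuous and strictly decreasing with $\ds{\lim_{c \to -\infty}}\, \lambda_{c,j} = \infty$ and $\ds{\lim_{c \to 0^-}}\, \lambda_{c,j} = \lambda_j$, and that nontrivial critical points of $\Phi_\lambda$ with energy $c$ correspond, via Proposition \ref{Proposition 7} and Lemma \ref{Lemma 8}, to critical points of $\lambda_c$ with value $\lambda$ lying on $\N_c$.

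For the nonexistence when $\lambda \le \lambda_1$, I would first note that since $c \mapsto \lambda_{c,1}$ is decreasing with limit $\lambda_1$ as $c \to 0^-$, we have $\lambda_{c,1} > \lambda_1 \ge \lambda$ for every $c \in I^-$. Because $\lambda_{c,1} = \min_{u \in \N_c} \Lambda_c(u) = \min_{u \in \N_c} \lambda_c(u)$ by \eqref{50}, the restriction of $\lambda_c$ to $\N_c$ never attains the value $\lambda$, so $\lambda_c$ has no critical point with critical value $\lambda$, and hence $\Phi_\lambda$ has no nontrivial critical point with energy $c$. For $c \ge 0$ I would instead observe that, with $F = 0$, the defining relation \eqref{23} of $\N_c$ reads $-(r - s)\, G(u) + cs = 0$; since $r > s$ and $G(u) < 0$ for $u \ne 0$, its left-hand side is strictly positive, so $\N_c = \emptyset$ and, by Lemma \ref{Lemma 8}, $\Phi_\lambda$ has no nontrivial critical point with energy $c \ge 0$. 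Combining the two ranges of $c$ gives both that $\Phi_\lambda$ has no nontrivial critical point when $\lambda \le \lambda_1$ and the final assertion that, for every $\lambda$, no nontrivial critical point has critical value in $\R \setminus I^-$.

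For the multiplicity when $\lambda > \lambda_k$, I would fix $j \in \set{1,\dots,k}$ and use that $\lambda > \lambda_k \ge \lambda_j$ together with the limits above: by the intermediate value theorem there is a unique $c_j \in I^-$ with $\lambda_{c_j,j} = \lambda$, i.e.\ the line $L_\lambda$ meets the curve $C_j$. Since $\lambda_{c_j,j}$ is a critical value of $\Lambda_{c_j}$ by Proposition \ref{Proposition 3}, it is a critical value of $\lambda_{c_j}$ by Lemma \ref{Lemma 6}, so Proposition \ref{Proposition 7} yields a nontrivial critical point $u_{c_j,j}$ of $\Phi_\lambda$ with $\Phi_\lambda(u_{c_j,j}) = c_j$; as $I_s, J_s, F, G$ are even, so is $\Phi_\lambda$, and $\set{u_{c_j,j}, - u_{c_j,j}}$ is a pair.

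The step I expect to require the most care is promoting these $k$ intersections to $k$ \emph{genuinely distinct} pairs. Using that $\lambda_{c,j}$ is nondecreasing in $j$ (since $\F_{c,j+1} \subset \F_{c,j}$) and strictly decreasing in $c$, I would show $c_1 \le \dotsb \le c_k$: from $\lambda_{c_{j+1},j} \le \lambda_{c_{j+1},j+1} = \lambda = \lambda_{c_j,j}$ and strict monotonicity in $c$ it follows that $c_{j+1} \ge c_j$. When the $c_j$ are distinct the solutions $u_{c_j,j}$ carry distinct energies $\Phi_\lambda(u_{c_j,j}) = c_j$, hence are geometrically distinct and give $k$ pairs. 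When several $c_j$ collapse to a common value $c$, then $\lambda$ is realized as consecutive minimax levels $\lambda_{c,i} = \dotsb = \lambda_{c,j} = \lambda$, and I would invoke the $\Z_2$-cohomological index multiplicity underlying the minimax scheme of Proposition \ref{Proposition 3} (the analogue for $\Lambda_c$ of Theorem \ref{Theorem 7}$(ii)$) to conclude that the critical set of $\Lambda_c$ at level $\lambda$ has index at least $j - i + 1$, supplying the missing pairs at energy $c$. Either way the total count is at least $k$ pairs, completing part $(iii)$.
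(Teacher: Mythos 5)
Your proposal is correct and takes essentially the same route as the paper: the paper's proof of Theorem \ref{Theorem 4} simply says ``Similar to the proof of Theorem \ref{Theorem 2}'', i.e., nonexistence for $\lambda \le \lambda_1$ via $\lambda < \lambda_{c,1} = \min_{\N_c} \lambda_c$, one intersection of $L_\lambda$ with each curve $C_j$, $j = 1,\dots,k$, when $\lambda > \lambda_k$, and $\N_c = \emptyset$ for $c \ge 0$ from \eqref{23} together with Lemma \ref{Lemma 8}. Your treatment is in fact slightly more careful than the paper's: where the paper tacitly treats the $k$ critical points as distinct, you observe that coinciding values $c_i = \dotsb = c_j$ force equal consecutive minimax levels and invoke the cohomological-index multiplicity underlying Proposition \ref{Proposition 3} to supply the missing pairs, which is a standard patch rather than a different method.
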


\begin{proof}
Similar to the proof of Theorem \ref{Theorem 2}.
\end{proof}

\begin{figure}[h!]
	\centering
	\begin{tikzpicture}[>=latex]
		%x axis
		\draw[->] (-1,0) -- (6,0) node[below] {\scalebox{0.8}{$\lambda$}};
		\foreach \x in {}
		\draw[shift={(\x,0)}] (0pt,2pt) -- (0pt,-2pt) node[below] {\footnotesize $\x$};
		%y axis
		\draw[->] (0,-3) -- (0,1) node[left] {\scalebox{0.8}{$c$}};
		\foreach \y in {}
		\draw[shift={(0,\y)}] (2pt,0pt) -- (-2pt,0pt) node[left] {\footnotesize $\y$};
		%\node[below left] at (0,0) {\footnotesize $0$};
		%	\draw[red,thick] (0,-2) .. controls (0,-1.5) and (0,0) .. (4,1.5);
		\draw[blue,thick] (.5,0) .. controls (1,0) and (2,0) .. (4.5,-3);
		%\draw[blue,thick] (2,0) .. controls (3,-1.4) .. (3,-1.4);
		%\draw[blue,thick,dashed] (0,1) .. controls (0,1) and (1,1) .. (2,0);
		\draw[blue,thick] (1.5,0) .. controls (2,0) and (3,0) .. (5,-3);
		%\draw[blue,thick] (3,0) .. controls (4,-1.4) .. (4,-1.4);
		\draw [thick] (.5,-.1) node[above]{\scalebox{0.8}{$\lambda_{1}$}} -- (.5,0.05); 
		\draw [thick] (3,0) -- (3,-3);
		
		\draw [thick] (1.5,-.1) node[above]{\scalebox{0.8}{$\lambda_{2}$}} -- (1.5,.05); 
		\draw [thick] (2.5,-.1) node[above]{\scalebox{0.8}{$\lambda_{3}$}} -- (2.5,0.05); 
		\draw [thick] (3.5,-.1) node[above]{\scalebox{0.8}{$\lambda_{k}$}} -- (3.5,0.05); 
		
		\draw [thick] (4.9,-1.5) node[above]{$\cdots$}; 
		\draw[blue,thick] (2.5,0) .. controls (3,0) and (4,0) .. (5.5,-3);
		\draw[blue,thick] (3.5,0) .. controls (4,0) and (5,0) .. (6,-3);
		
		%\draw[blue,thick] (4,0) .. controls (5,-1.4) .. (5,-1.4);
		%\draw [thick] (3.2,-1.3) node[above]{$\vdots$} -- (3.2,-1.3); 
		%\draw [thick] (-.1,2.3) node[right]{$\vdots$} ;%-- (-.1,2,2);
		%\draw (.5,.35) node{$\bullet$};
		\draw (3,-.1) node{\scalebox{0.8}{$\bullet$}};
		\draw (3,-.6) node{\scalebox{0.8}{$\bullet$}};
		\draw (3,-1.4) node{\scalebox{0.8}{$\bullet$}};
	%	\draw (4.5,-3) node{\scalebox{0.8}{$\bullet$}};
		%\draw (1.8,-.18) node{\scalebox{0.8}{$\bullet$}};
		%\draw (1.8,-.37) node{\scalebox{0.8}{$\bullet$}};
		%\draw (1.8,-.82) node{\scalebox{0.8}{$\bullet$}};
		\draw  (1,-3) node[above]{\scalebox{1.5}{$\nexists$}}  ; 	
		\draw  (6.5,-3.3) node[above]{\scalebox{0.8}{$\lambda_{c,k}$}} ; 	
	%	\draw  (6.5,-2.3) node[above]{\scalebox{0.8}{$\lambda_{c,3}$}} ; 
	%	\draw  (6.5,-2.8) node[above]{\scalebox{0.8}{$\lambda_{c,2}$}} ; 
		
		\draw  (4,-3.3) node[above]{\scalebox{0.8}{$\lambda_{c,1}$}} ; 
		
		%\draw [thick] (-2.8,1.1) node[above]{\scalebox{0.8}{$\mu_{1,c}$}} -- (-2.8,1.2); 	
		%	\draw[blue,thick,dashed] (3,1.3) .. controls (3,1.3) and (4,1.5) .. (4,1.5);
		%\draw [thick] (1,-.1) node[below]{$\mu_1$} -- (1,0.1); 
		%\draw [thick] (2,-.05) node[above]{\scalebox{0.8}{$\mu_{1,0}$}} -- (2,0.1); 
		%\draw [thick] (3,-.05) node[below]{\scalebox{0.8}{$\mu_2$}} -- (3,0.1); 
		%\draw [thick] (4,-.05) node[below]{\scalebox{0.8}{$\mu_n$}} -- (4,.1); 
		%\draw [thick] (1.8,0) node[above left]{\scalebox{0.8}{\tiny$\overline{\mu}$}} -- (1.8,0); 
		%\draw [thick] (2.5,-0.05) node[above]{\scalebox{0.8}{$\mu^*$}} -- (2.5,0.1); 
		%	\draw [thick] (-.1,-2) node[left]{$-\infty$} -- (.1,-2); 
		%	\node[] at (1,1.5) { {\color{blue}$\Phi_\mu(w_\mu)$}};
		%	\node[] at (3,0.5) { {\color{red}$\Phi_\mu(u_\mu)$}};
		%\draw  (.8,0.1) node[above]{\scalebox{0.8}{$\Phi_\mu(u_\mu)$}} ; 
		%\draw  (2.5,-2) node[above]{\scalebox{0.8}{$\Phi_\mu(v_\mu)$}} ; 
		\draw  (3,-3) node[left]{\scalebox{0.8}{$L_\lambda$}} ;
	\end{tikzpicture}
	\caption{Energy curves for the case $F=0,\, G<0$ (see Theorem \ref{Theorem 4}).} \label{fig4}
\end{figure}

\begin{theorem} \label{Theorem 5}
Assume $(A_1)$--$(A_{11})$, $(H_1)$--$(H_3)$, $(H_4)(v)$, and $(H_5)$.
\begin{enumroman}
\item For each $c \in I^-$, $\lambda_{c,k} \nearrow \infty$ is a sequence of critical values of $\lambda_c$. Moreover, $\Phi_{\lambda_{c,k}}$ has a nontrivial critical point $u_{c,k}$ with $\Phi_{\lambda_{c,k}}(u_{c,k}) = c$.
\item For each $k \ge 1$, the mapping $I^- \to \R,\, c \mapsto \lambda_{c,k}$ is continuous, decreasing, and satisfies
    \[
    \lim_{c \to 0^-}\, \lambda_{c,k} = - \infty, \qquad \lim_{c \to - \infty}\, \lambda_{c,k} = \infty.
    \]
\item For each $\lambda \in \R$, $\Phi_\lambda$ has a sequence of nontrivial critical points $\seq{v_{\lambda,k}}$ such that $\Phi_\lambda(v_{\lambda,k}) \nearrow 0$ and $\norm{v_{\lambda,k}} \to 0$. Moreover, $\Phi_\lambda$ has no nontrivial critical point with critical value in $\R \setminus I^-$.
\end{enumroman}
\end{theorem}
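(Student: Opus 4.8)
The plan is to note first that parts $(i)$ and $(ii)$ require no fresh argument, just as the remark before Theorem \ref{Theorem 1} indicates: $(i)$ follows from Proposition \ref{Proposition 3}, Lemma \ref{Lemma 6}, and Proposition \ref{Proposition 7}, and $(ii)$ from Proposition \ref{Proposition 1} together with Proposition \ref{Proposition 2}. In case $(H_4)(v)$ one has $F > 0$ and $G < 0$, so Proposition \ref{Proposition 2}$(i)$, whose hypothesis $F > 0,\, G \le 0$ is met, gives $\lim_{c \to 0^-} \lambda_{c,k} = - \infty$, while Proposition \ref{Proposition 2}$(iv)$, whose hypothesis $F \ge 0,\, G < 0$ is met, gives $\lim_{c \to - \infty} \lambda_{c,k} = \infty$. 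Hence I would write out only part $(iii)$, modelled on the proof of Theorem \ref{Theorem 1}.

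For the bifurcation-from-zero claim I would fix $\lambda \in \R$. By $(ii)$ the map $c \mapsto \lambda_{c,k}$ is continuous and decreasing on $I^-$ with limits $- \infty$ at $0^-$ and $+ \infty$ at $- \infty$, so the line $L_\lambda$ crosses every curve $C_k$ and there is a $c_k \in I^-$ with $\lambda_{c_k,k} = \lambda$. Proposition \ref{Proposition 3} then gives a critical point $v_{\lambda,k} \in \N_{c_k}$ of $\Lambda_{c_k}$ at level $\lambda$, which is a nontrivial critical point of $\Phi_\lambda$ with $\Phi_\lambda(v_{\lambda,k}) = c_k$ by Lemma \ref{Lemma 6} and Proposition \ref{Proposition 7}. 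Next I would argue, exactly as in Theorem \ref{Theorem 1}, that the monotonicity of $c \mapsto \lambda_{c,k}$ together with $\lambda_{c,k} \nearrow \infty$ forces $c_k \nearrow 0$, whence $\Phi_\lambda(v_{\lambda,k}) \nearrow 0$.

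The crux is the convergence $\norm{v_{\lambda,k}} \to 0$. Since $\Lambda_{c_k}(v_{\lambda,k}) = \lambda$ for all $k$ and $\seq{c_k}$ is bounded, Lemma \ref{Lemma 9} (coercivity uniform in $c$ on bounded sets) makes $\seq{v_{\lambda,k}}$ bounded, so along a subsequence $v_{\lambda,k} \wto v_\lambda$ with $F(v_{\lambda,k}) \to F(v_\lambda)$ and $G(v_{\lambda,k}) \to G(v_\lambda)$. From $v_{\lambda,k} \in \N_{c_k}$ and \eqref{23} one has $(s - q)\, F(v_{\lambda,k}) - (r - s)\, G(v_{\lambda,k}) = - c_k\, s \to 0$. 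Because $F > 0$ and $G < 0$, the summands $(s - q)\, F(v_{\lambda,k})$ and $- (r - s)\, G(v_{\lambda,k})$ are both nonnegative, so each tends to $0$; in particular $F(v_\lambda) = 0$, which forces $v_\lambda = 0$. I would then test $\Phi_\lambda'(v_{\lambda,k}) = 0$ against $v_{\lambda,k}$, use $\Bs(v_{\lambda,k})\, v_{\lambda,k} \to 0$ from $(A_9)$ and $f(v_{\lambda,k})\, v_{\lambda,k} \to 0$, $g(v_{\lambda,k})\, v_{\lambda,k} \to 0$ from $(H_2)$ to deduce $\As(v_{\lambda,k})\, v_{\lambda,k} \to 0$, and apply $(A_7)$ to upgrade this to $v_{\lambda,k} \to 0$ strongly along a further subsequence.

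For the nonexistence statement, let $c \in \R \setminus I^-$, i.e.\ $c \ge 0$. A nontrivial $u \in \N_c$ would satisfy $(s - q)\, F(u) - (r - s)\, G(u) + cs = 0$ by \eqref{23}, but with $F(u) > 0$, $- G(u) > 0$, $r > s > q > 0$, and $c \ge 0$ the left-hand side is strictly positive; so $\N_c = \emptyset$ and Lemma \ref{Lemma 8} rules out any nontrivial critical point of $\Phi_\lambda$ at level $c$. I expect the one genuinely new difficulty to be the splitting step in the previous paragraph: in Theorem \ref{Theorem 1} the condition $G \equiv 0$ yields $F(v_{\lambda,k}) \to 0$ from the Nehari identity at once, whereas here one must exploit the opposite signs of $F$ and $G$ in $(H_4)(v)$ to force both potentials to vanish in the limit before the abstract compactness condition $(A_7)$ delivers the strong convergence.
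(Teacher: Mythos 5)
Your proposal is correct and is essentially the paper's own proof: the paper disposes of Theorem \ref{Theorem 5} with the single line ``Similar to the proof of Theorem \ref{Theorem 1},'' and your writeup is exactly that adaptation, with parts $(i)$--$(ii)$ read off from Propositions \ref{Proposition 3}, \ref{Proposition 7}, \ref{Proposition 1}, \ref{Proposition 2} and Lemma \ref{Lemma 6}. The one genuinely new step you identify --- using $F>0$, $G<0$ so that the two nonnegative terms in $(s-q)\,F(v_{\lambda,k})-(r-s)\,G(v_{\lambda,k})=-c_k\,s\to 0$ each vanish, forcing $F(v_\lambda)=0$ and hence $v_\lambda=0$ before invoking $(A_7)$ --- is precisely the correct way to handle $(H_4)(v)$, and your nonexistence argument for $c\ge 0$ matches the paper's as well.
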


\begin{proof}
Similar to the proof of Theorem \ref{Theorem 1}.
\end{proof}

\begin{figure}[h!]
	\centering
	\begin{tikzpicture}[>=latex]
		%x axis
		\draw[->] (-1,0) -- (6,0) node[below] {\scalebox{0.8}{$\lambda$}};
		\foreach \x in {}
		\draw[shift={(\x,0)}] (0pt,2pt) -- (0pt,-2pt) node[below] {\footnotesize $\x$};
		%y axis
		\draw[->] (0,-3) -- (0,1) node[left] {\scalebox{0.8}{$c$}};
		\foreach \y in {}
		\draw[shift={(0,\y)}] (2pt,0pt) -- (-2pt,0pt) node[left] {\footnotesize $\y$};
		%\node[below left] at (0,0) {\footnotesize $0$};
		%	\draw[red,thick] (0,-2) .. controls (0,-1.5) and (0,0) .. (4,1.5);
		\draw[blue,thick] (-1,-.4) .. controls (1.5,-.5) and (2.5,-.5) .. (4.5,-3);
		%\draw[blue,thick] (2,0) .. controls (3,-1.4) .. (3,-1.4);
		%\draw[blue,thick,dashed] (0,1) .. controls (0,1) and (1,1) .. (2,0);
		\draw[blue,thick] (-1,-.3) .. controls (2,-.4) and (3,-.4) .. (5,-3);
		%\draw[blue,thick] (3,0) .. controls (4,-1.4) .. (4,-1.4);
		%	\draw [thick] (.5,-.1) node[above]{\scalebox{0.8}{$\lambda_{1}$}} -- (.5,0.05); 
		\draw [thick] (3,0) -- (3,-3);
		
		%	\draw [thick] (1.5,-.1) node[above]{\scalebox{0.8}{$\lambda_{2}$}} -- (1.5,.05); 
		%	\draw [thick] (2.5,-.1) node[above]{\scalebox{0.8}{$\lambda_{3}$}} -- (2.5,0.05); 
		%	\draw [thick] (3.5,-.1) node[above]{\scalebox{0.8}{$\lambda_{k}$}} -- (3.5,0.05); 
		
		\draw [thick] (4.8,-2.2) node[above]{$\cdots$}; 
		\draw[blue,thick] (-1,-.2) .. controls (2.5,-.3) and (3,-.3) .. (5.5,-3);
		\draw[blue,thick] (-1,-.1) .. controls (3,-.2) and (3.5,-.2) .. (6,-3);
		
		%\draw[blue,thick] (4,0) .. controls (5,-1.4) .. (5,-1.4);
		%\draw [thick] (3.2,-1.3) node[above]{$\vdots$} -- (3.2,-1.3); 
		%\draw [thick] (-.1,2.3) node[right]{$\vdots$} ;%-- (-.1,2,2);
		%\draw (.5,.35) node{$\bullet$};
		\draw (3,-.55) node{\scalebox{0.8}{$\bullet$}};
		\draw (3,-.8) node{\scalebox{0.8}{$\bullet$}};
		\draw (3,-1.05) node{\scalebox{0.8}{$\bullet$}};
		\draw (3,-1.45) node{\scalebox{0.8}{$\bullet$}};
		%\draw (1.8,-.18) node{\scalebox{0.8}{$\bullet$}};
		%\draw (1.8,-.37) node{\scalebox{0.8}{$\bullet$}};
		%\draw (1.8,-.82) node{\scalebox{0.8}{$\bullet$}};
		\draw  (1,-3) node[above]{\scalebox{1.5}{$\nexists$}}  ; 	
		\draw  (6.5,-3.3) node[above]{\scalebox{0.8}{$\lambda_{c,k}$}} ; 	
		%	\draw  (6.5,-2.3) node[above]{\scalebox{0.8}{$\lambda_{c,3}$}} ; 
		%	\draw  (6.5,-2.8) node[above]{\scalebox{0.8}{$\lambda_{c,2}$}} ; 
		
		\draw  (4,-3.3) node[above]{\scalebox{0.8}{$\lambda_{c,1}$}} ; 
		
		%\draw [thick] (-2.8,1.1) node[above]{\scalebox{0.8}{$\mu_{1,c}$}} -- (-2.8,1.2); 	
		%	\draw[blue,thick,dashed] (3,1.3) .. controls (3,1.3) and (4,1.5) .. (4,1.5);
		%\draw [thick] (1,-.1) node[below]{$\mu_1$} -- (1,0.1); 
		%\draw [thick] (2,-.05) node[above]{\scalebox{0.8}{$\mu_{1,0}$}} -- (2,0.1); 
		%\draw [thick] (3,-.05) node[below]{\scalebox{0.8}{$\mu_2$}} -- (3,0.1); 
		%\draw [thick] (4,-.05) node[below]{\scalebox{0.8}{$\mu_n$}} -- (4,.1); 
		%\draw [thick] (1.8,0) node[above left]{\scalebox{0.8}{\tiny$\overline{\mu}$}} -- (1.8,0); 
		%\draw [thick] (2.5,-0.05) node[above]{\scalebox{0.8}{$\mu^*$}} -- (2.5,0.1); 
		%	\draw [thick] (-.1,-2) node[left]{$-\infty$} -- (.1,-2); 
		%	\node[] at (1,1.5) { {\color{blue}$\Phi_\mu(w_\mu)$}};
		%	\node[] at (3,0.5) { {\color{red}$\Phi_\mu(u_\mu)$}};
		%\draw  (.8,0.1) node[above]{\scalebox{0.8}{$\Phi_\mu(u_\mu)$}} ; 
		%\draw  (2.5,-2) node[above]{\scalebox{0.8}{$\Phi_\mu(v_\mu)$}} ; 
		\draw  (3,0) node[above]{\scalebox{0.8}{$L_\lambda$}} ;
	\end{tikzpicture}
	\caption{Energy curves for the case $F>0,\, G<0$ (see Theorem \ref{Theorem 5}).} \label{fig5}
\end{figure}

\begin{theorem} \label{Theorem 6}
Assume $(A_1)$--$(A_{11})$, $(H_1)$--$(H_3)$, $(H_4)(vi)$, and $(H_5)$.
\begin{enumroman}
\item For each $c \in I^+$, $\lambda_{c,k} \nearrow \infty$ is a sequence of critical values of $\lambda_c$. Moreover, $\Phi_{\lambda_{c,k}}$ has a nontrivial critical point $u_{c,k}$ with $\Phi_{\lambda_{c,k}}(u_{c,k}) = c$.
\item For each $k \ge 1$, the mapping $I^+ \to \R,\, c \mapsto \lambda_{c,k}$ is continuous, decreasing, and satisfies
    \[
    \lim_{c \to 0^+}\, \lambda_{c,k} = \infty, \qquad \lim_{c \to \infty}\, \lambda_{c,k} = - \infty.
    \]
\item For each $\lambda \in \R$, $\Phi_\lambda$ has a sequence of nontrivial critical points $\seq{v_{\lambda,k}}$ such that $\Phi_\lambda(v_{\lambda,k}) \nearrow \infty$ and $\norm{v_{\lambda,k}} \to \infty$. Moreover, $\Phi_\lambda$ has no nontrivial critical point with critical value in $\R \setminus I^+$.
\end{enumroman}
\end{theorem}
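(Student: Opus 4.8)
The plan is to follow the template already established for the companion theorems, treating parts (i) and (ii) as consequences of Proposition \ref{Proposition 3}, Lemma \ref{Lemma 6}, Proposition \ref{Proposition 7}, Proposition \ref{Proposition 1}, and Proposition \ref{Proposition 2}---here the two limits asserted in (ii) come from parts $(iii)$ and $(ii)$ of Proposition \ref{Proposition 2} applied with $F < 0$ and $G > 0$---and concentrating on part (iii). Since we are in case $(H_4)(vi)$, part (ii) says that for each fixed $k$ the map $I^+ \to \R,\, c \mapsto \lambda_{c,k}$ is continuous and decreasing with $\lambda_{c,k} \to \infty$ as $c \to 0^+$ and $\lambda_{c,k} \to -\infty$ as $c \to \infty$, so its range is all of $\R$. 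Hence for every $\lambda \in \R$ and every $k \ge 1$ the horizontal line $L_\lambda$ meets the curve $C_k$: there is a unique $c_k \in I^+$ with $\lambda_{c_k,k} = \lambda$. In contrast to the $F = 0$ situation of Theorem \ref{Theorem 3}, the intersection occurs for all $k$ rather than only for large $k$, so we already obtain a full sequence indexed by $k \ge 1$.

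Next I would extract the critical points. For each $k$, $\lambda = \lambda_{c_k,k}$ is a critical value of $\Lambda_{c_k}$ by Proposition \ref{Proposition 3}; Lemma \ref{Lemma 6} then promotes the associated critical point of $\Lambda_{c_k}$ to a critical point of $\lambda_{c_k}$, and Proposition \ref{Proposition 7} identifies it as a nontrivial critical point $v_{\lambda,k}$ of $\Phi_\lambda$ with $\Phi_\lambda(v_{\lambda,k}) = c_k$. This produces the required sequence; what is left is to establish the asymptotics $c_k \nearrow \infty$ and $\norm{v_{\lambda,k}} \to \infty$.

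I expect the asymptotics to be the main (if brief) obstacle, since this is precisely the bifurcation-from-infinity claim and it rests on combining two monotonicities, exactly as in the proof of Theorem \ref{Theorem 1}. On one hand $c \mapsto \lambda_{c,k}$ is decreasing; on the other hand $\lambda_{c,k} \nearrow \infty$ in $k$ for each fixed $c$ by part (i). Fixing an arbitrary $c_0 \in I^+$, for all large $k$ we have $\lambda_{c_0,k} > \lambda = \lambda_{c_k,k}$, so the monotonicity in $c$ forces $c_k > c_0$; and from $\lambda_{c_k,k} = \lambda = \lambda_{c_{k+1},k+1} \ge \lambda_{c_{k+1},k}$ the same monotonicity gives $c_k \le c_{k+1}$. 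As $c_0 \in I^+$ was arbitrary and $\sup I^+ = \infty$, this yields $c_k \nearrow \infty$, hence $\Phi_\lambda(v_{\lambda,k}) = c_k \nearrow \infty$; and because $\Phi_\lambda$ is bounded on bounded sets, $\norm{v_{\lambda,k}} \to \infty$. This is the bifurcation at $(\lambda,\infty)$.

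Finally I would rule out nontrivial critical values in $\R \setminus I^+$, i.e.\ for $c \le 0$, by reading off the description \eqref{23} of $\N_c$. In case $(vi)$ we have $F(u) < 0$ and $G(u) > 0$ for $u \ne 0$, so $(s - q)\, F(u) - (r - s)\, G(u) + cs$ is a sum whose first term is negative (as $s > q$), whose second term is negative (as $r > s$), and whose last term $cs$ is nonpositive; it is therefore strictly negative and never vanishes. Thus $\N_c = \emptyset$ for every $c \le 0$, and Lemma \ref{Lemma 8} then shows that $\Phi_\lambda$ has no nontrivial critical point with critical value outside $I^+$, completing part (iii).
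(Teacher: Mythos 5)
Your proposal is correct and follows essentially the same route as the paper, whose proof of Theorem \ref{Theorem 6} is simply ``Similar to the proof of Theorem \ref{Theorem 3}'': intersection of $L_\lambda$ with the curves $C_k$ via part (ii), extraction of critical points through Proposition \ref{Proposition 3}, Lemma \ref{Lemma 6}, and Proposition \ref{Proposition 7}, the monotonicity argument from Theorem \ref{Theorem 1} giving $c_k \nearrow \infty$ (hence $\norm{v_{\lambda,k}} \to \infty$ since $\Phi_\lambda$ is bounded on bounded sets), and emptiness of $\N_c$ for $c \le 0$ via \eqref{23} and Lemma \ref{Lemma 8}. Your sign analysis of $(s-q)F(u) - (r-s)G(u) + cs$ in case $(vi)$ and your identification of the relevant parts of Proposition \ref{Proposition 2} are exactly the right adaptations that the paper leaves implicit.
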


\begin{proof}
Similar to the proof of Theorem \ref{Theorem 3}.
\end{proof}

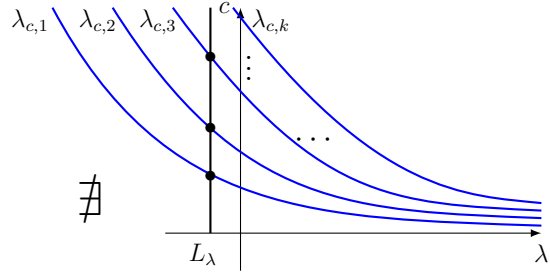
\begin{figure}[H]
	\centering
	\begin{tikzpicture}[>=latex]
		%x axis
		\draw[->] (-1,0) -- (4,0) node[below] {\scalebox{0.8}{$\lambda$}};
		\foreach \x in {}
		\draw[shift={(\x,0)}] (0pt,2pt) -- (0pt,-2pt) node[below] {\footnotesize $\x$};
		%y axis
		\draw[->] (0,-.5) -- (0,3) node[left] {\scalebox{0.8}{$c$}};
		\foreach \y in {}
		\draw[shift={(0,\y)}] (2pt,0pt) -- (-2pt,0pt) node[left] {\footnotesize $\y$};
		%\node[below left] at (0,0) {\footnotesize $0$};
		%	\draw[red,thick] (0,-2) .. controls (0,-1.5) and (0,0) .. (4,1.5);
		\draw[blue,thick] (-2.5,3) .. controls (-1,.2) and (1,.2) .. (4,.1);
		%\draw[blue,thick] (2,0) .. controls (3,-1.4) .. (3,-1.4);
		%\draw[blue,thick,dashed] (0,1) .. controls (0,1) and (1,1) .. (2,0);
		\draw[blue,thick] (-1.7,3) .. controls (0,.3) and (1.5,.3) .. (4,.2);
		%\draw[blue,thick] (3,0) .. controls (4,-1.4) .. (4,-1.4);
	%	\draw [thick] (1,-.1) node[below]{\scalebox{0.8}{$\lambda_{1}$}} -- (1,0.05); 
		\draw [thick] (-.4,0) -- (-.4,3);
		
	%	\draw [thick] (2,-.1) node[below]{\scalebox{0.8}{$\lambda_{2}$}} -- (2,0.05); 
	%	\draw [thick] (3,-.1) node[below]{\scalebox{0.8}{$\lambda_{3}$}} -- (3,0.05); 
	%	\draw [thick] (4,-.1) node[below]{\scalebox{0.8}{$\lambda_{k}$}} -- (4,0.05); 
		
		\draw [thick] (1,1) node[above]{$\cdots$}; 
		\draw[blue,thick] (-0.9,3) .. controls (1,.4) and (2,.4) .. (4,.3);
		\draw[blue,thick] (-0.1,3) .. controls (1.8,.5) and (3,.5) .. (4,.4);
		
		%\draw[blue,thick] (4,0) .. controls (5,-1.4) .. (5,-1.4);
		%\draw [thick] (3.2,-1.3) node[above]{$\vdots$} -- (3.2,-1.3); 
		\draw [thick] (-.1,2.3) node[right]{$\vdots$} ;%-- (-.1,2,2);
		%\draw (.5,.35) node{$\bullet$};
		\draw (-.4,.75) node{\scalebox{0.8}{$\bullet$}};
		\draw (-.4,1.4) node{\scalebox{0.8}{$\bullet$}};
		\draw (-.4,2.34) node{\scalebox{0.8}{$\bullet$}};
		%\draw (1.8,-.18) node{\scalebox{0.8}{$\bullet$}};
		%\draw (1.8,-.37) node{\scalebox{0.8}{$\bullet$}};
		%\draw (1.8,-.82) node{\scalebox{0.8}{$\bullet$}};
		\draw  (-2,.1) node[above]{\scalebox{1.5}{$\nexists$}}  ; 	
		\draw  (-2.8,2.5) node[above]{\scalebox{0.8}{$\lambda_{c,1}$}} ; 	
		\draw  (-1.9,2.5) node[above]{\scalebox{0.8}{$\lambda_{c,2}$}} ; 
		\draw  (-1.1,2.5) node[above]{\scalebox{0.8}{$\lambda_{c,3}$}} ; 
		
 			\draw  (.4,2.5) node[above]{\scalebox{0.8}{$\lambda_{c,k}$}} ; 
		
		%\draw [thick] (-2.8,1.1) node[above]{\scalebox{0.8}{$\mu_{1,c}$}} -- (-2.8,1.2); 	
		%	\draw[blue,thick,dashed] (3,1.3) .. controls (3,1.3) and (4,1.5) .. (4,1.5);
		%\draw [thick] (1,-.1) node[below]{$\mu_1$} -- (1,0.1); 
		%\draw [thick] (2,-.05) node[above]{\scalebox{0.8}{$\mu_{1,0}$}} -- (2,0.1); 
		%\draw [thick] (3,-.05) node[below]{\scalebox{0.8}{$\mu_2$}} -- (3,0.1); 
		%\draw [thick] (4,-.05) node[below]{\scalebox{0.8}{$\mu_n$}} -- (4,.1); 
		%\draw [thick] (1.8,0) node[above left]{\scalebox{0.8}{\tiny$\overline{\mu}$}} -- (1.8,0); 
		%\draw [thick] (2.5,-0.05) node[above]{\scalebox{0.8}{$\mu^*$}} -- (2.5,0.1); 
		%	\draw [thick] (-.1,-2) node[left]{$-\infty$} -- (.1,-2); 
		%	\node[] at (1,1.5) { {\color{blue}$\Phi_\mu(w_\mu)$}};
		%	\node[] at (3,0.5) { {\color{red}$\Phi_\mu(u_\mu)$}};
		%\draw  (.8,0.1) node[above]{\scalebox{0.8}{$\Phi_\mu(u_\mu)$}} ; 
		%\draw  (2.5,-2) node[above]{\scalebox{0.8}{$\Phi_\mu(v_\mu)$}} ; 
		\draw  (-.5,0) node[below]{\scalebox{0.8}{$L_\lambda$}} ;
	\end{tikzpicture}
	\caption{Energy curves for the case $F<0,\, G>0$ (see Theorem \ref{Theorem 6}).} \label{fig6}
\end{figure}

\section{Proofs of Theorems \ref{Theorem 101}--\ref{Theorem 106}} \label{Section 3}

In this section we will show that equations \eqref{103}--\eqref{108} fit into the abstract framework of the last section. Theorems \ref{Theorem 101}--\ref{Theorem 106} will then follow from Theorems \ref{Theorem 1}--\ref{Theorem 6}, respectively.

We take $W = E_r(\R^3)$, $u_t = t^2\, u(t\, \cdot)$, and $s = 3$. It was shown in Mercuri and Perera \cite[Lemma 3.1]{MePe2} that the mapping $E_r(\R^3) \times [0,\infty) \to E_r(\R^3),\, (u,t) \mapsto u_t$ is continuous. Assumptions $(A_1)$--$(A_3)$ are clearly satisfied, while $(A_4)$ and $(A_5)$ follow from the estimate
\[
\norm{u_t} = \left[t^3 \int_{\R^3} |\nabla u|^2\, dx + t^{3/2} \left(\int_{\R^3} \int_{\R^3} \frac{u^2(x)\, u^2(y)}{|x - y|}\, dx\, dy\right)^{1/2}\right]^{1/2} \le \max \set{t^{3/2},t^{3/4}} \norm{u}.
\]

The operators $\As$ and $\Bs$ given by
\begin{multline*}
\As(u) v = \int_{\R^3} \nabla u \cdot \nabla v\, dx + \frac{1}{4 \pi} \int_{\R^3} \int_{\R^3} \frac{u^2(x)\, u(y)\, v(y)}{|x - y|}\, dx\, dy, \quad \Bs(u) v = \int_{\R^3} |u| uv\, dx,\\[7.5pt]
u, v \in E_r(\R^3)
\end{multline*}
are scaled operators that clearly satisfy $(A_6)$ and $(A_8)$. Assumption $(A_7)$ is verified in Mercuri and Perera \cite[Lemma 3.2]{MePe2}, while $(A_9)$ follows from the compactness of the embedding $E_r(\R^3) \hookrightarrow L^3(\R^3)$. The potentials $I$ and $J$ of $\As$ and $\Bs$ given in \eqref{200} clearly satisfy $(A_{10})$, and $(A_{11})$ is verified in Mercuri and Perera \cite[Section 3.1]{MePe2}.

In Theorems \ref{Theorem 101}--\ref{Theorem 106}, the operators $f$ and $g$ are given by
\begin{enumerate}
\item[$(i)$] $f(u) v = \dint_{\R^3} |u|^{\sigma - 2}\, uv\, dx$ and $g = 0$,
\item[$(ii)$] $f(u) v = - \dint_{\R^3} |u|^{\sigma - 2}\, uv\, dx$ and $g = 0$,
\item[$(iii)$] $f = 0$ and $g(u) v = \dint_{\R^3} |u|^{\tau - 2}\, uv\, dx$,
\item[$(iv)$] $f = 0$ and $g(u) v = - \dint_{\R^3} |u|^{\tau - 2}\, uv\, dx$,
\item[$(v)$] $f(u) v = \dint_{\R^3} |u|^{\sigma - 2}\, uv\, dx$ and $g(u) v = - \dint_{\R^3} |u|^{\tau - 2}\, uv\, dx$,
\item[$(vi)$] $f(u) v = - \dint_{\R^3} |u|^{\sigma - 2}\, uv\, dx$ and $g(u) v = \dint_{\R^3} |u|^{\tau - 2}\, uv\, dx$,
\end{enumerate}
respectively. Assumption $(H_1)$ is satisfied with $q = 2 \sigma - 3$ and $r = 2 \tau - 3$, while $(H_2)$ follows from the compactness of the embedding $E_r(\R^3) \hookrightarrow L^\rho(\R^3)$ for $\rho \in (18/7,6)$. The potentials $F$ and $G$ of $f$ and $g$ clearly satisfy $(i)$--$(vi)$ of $(H_4)$ in the above six cases, and $(H_5)$ is satisfied since
\[
h(u) u = 2\, [(3 - \sigma)\, f(u) u - (\tau - 3)\, g(u) u] \ne 0 \quad \forall u \in E_r(\R^3) \setminus \set{0}
\]
in all cases.

It only remains to verify $(H_3)$, which we do in the following lemma.

\begin{lemma}
For any $\alpha, \beta, \gamma, \delta \in \R$, every solution of the equation
\begin{equation} \label{300}
\alpha \left[- \Delta u + \left(\frac{1}{4 \pi |x|} \star u^2\right) u\right] = \beta\, |u| u + \gamma\, |u|^{\sigma - 2}\, u + \delta\, |u|^{\tau - 2}\, u \quad \text{in } \R^3
\end{equation}
satisfies the Poho\v{z}aev type identity
\begin{multline*}
\alpha \left[\frac{1}{2} \int_{\R^3} |\nabla u|^2\, dx + \frac{1}{16 \pi} \int_{\R^3} \int_{\R^3} \frac{u^2(x)\, u^2(y)}{|x - y|}\, dx\, dy\right] = \frac{\beta}{3} \int_{\R^3} |u|^3\, dx\\[7.5pt]
+ \left(\frac{2}{3} - \frac{1}{\sigma}\right) \gamma \int_{\R^3} |u|^\sigma\, dx + \left(\frac{2}{3} - \frac{1}{\tau}\right) \delta \int_{\R^3} |u|^\tau\, dx.
\end{multline*}
\end{lemma}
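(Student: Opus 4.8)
The plan is to realize the mechanism of the Remark above concretely, by pairing the equation with the infinitesimal generator of the scaling $u_t(x) = t^2\, u(tx)$. A solution $u$ of \eqref{300} is a critical point of
\[
\Psi(u) = \alpha\, I(u) - \beta\, J(u) - \frac{\gamma}{\sigma} \int_{\R^3} |u|^\sigma\, dx - \frac{\delta}{\tau} \int_{\R^3} |u|^\tau\, dx,
\]
with $I$, $J$ as in \eqref{200}. Writing $w := 2u + x \cdot \nabla u = \restr{\frac{d}{dt}}{t=1} u_t$, I would use the guiding principle that each scaled term, paired against $w$, returns its scaling exponent times its potential. Concretely, a change of variables gives $\int_{\R^3} |u_t|^\rho\, dx = t^{2\rho - 3} \int_{\R^3} |u|^\rho\, dx$, and together with $I(u_t) = t^3\, I(u)$ and $J(u_t) = t^3\, J(u)$ this yields
\[
\As(u)\, w = \restr{\frac{d}{dt}}{t=1} I(u_t) = 3\, I(u), \qquad \Bs(u)\, w = 3\, J(u),
\]
\[
\int_{\R^3} |u|^{\sigma - 2} u\, w\, dx = (2\sigma - 3) \cdot \frac{1}{\sigma} \int_{\R^3} |u|^\sigma\, dx, \qquad \int_{\R^3} |u|^{\tau - 2} u\, w\, dx = (2\tau - 3) \cdot \frac{1}{\tau} \int_{\R^3} |u|^\tau\, dx.
\]
Pairing the weak form of \eqref{300} with $w$ then gives $3 \alpha\, I(u) = 3 \beta\, J(u) + (2\sigma - 3)\, \frac{\gamma}{\sigma} \int_{\R^3} |u|^\sigma\, dx + (2\tau - 3)\, \frac{\delta}{\tau} \int_{\R^3} |u|^\tau\, dx$.

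The main obstacle is justifying that $w$ is an admissible test function, i.e. that $t \mapsto u_t$ is differentiable at $t = 1$ in $E_r(\R^3)$ with derivative $w$; equivalently, that the chain rule $\restr{\frac{d}{dt}}{t=1} \Psi(u_t) = \Psi'(u)\, w = 0$ is legitimate. For a generic element of $E_r(\R^3)$ the term $x \cdot \nabla u$ need not belong to $D^{1,2}(\R^3)$, so this step is exactly where the solution property is used beyond mere membership in the space. The resolution is a regularity-and-decay argument: since the Coulomb potential $\frac{1}{4 \pi |x|} \star u^2$ is bounded and decays at infinity, a bootstrap on \eqref{300} shows that any solution $u$ is smooth and that $u$ and its first derivatives decay, so that $w \in E_r(\R^3)$ and the pairings above are valid. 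With this in hand, each displayed pairing can also be verified by hand through integration by parts — the Dirichlet term by the standard Poho\v{z}aev computation $\int_{\R^3} \nabla u \cdot \nabla w\, dx = \frac{3}{2} \int_{\R^3} |\nabla u|^2\, dx$, the nonlocal term by its analogous dilation identity, and the power terms elementarily — the boundary contributions vanishing precisely because of the decay. I expect controlling these boundary terms (which involve $x \cdot \nabla u$ at infinity, handled by a cutoff together with the known asymptotics of the Schr\"{o}dinger--Poisson--Slater system) to be the delicate point.

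The remaining step is purely arithmetic. Substituting $J(u) = \frac{1}{3} \int_{\R^3} |u|^3\, dx$ and the explicit form of $I(u)$ into the relation $3 \alpha\, I(u) = 3 \beta\, J(u) + (2\sigma - 3)\, \frac{\gamma}{\sigma} \int_{\R^3} |u|^\sigma\, dx + (2\tau - 3)\, \frac{\delta}{\tau} \int_{\R^3} |u|^\tau\, dx$ and dividing by $3$, the coefficients become $\frac{\beta}{3}$, $\frac{2\sigma - 3}{3\sigma} = \frac{2}{3} - \frac{1}{\sigma}$, and $\frac{2\tau - 3}{3\tau} = \frac{2}{3} - \frac{1}{\tau}$, which is exactly the asserted Poho\v{z}aev identity.
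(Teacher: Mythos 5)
Your identity and arithmetic are correct, and in substance your proof is the \emph{same} linear combination as the paper's: pairing \eqref{300} with the scaling generator $w = 2u + x\cdot\nabla u$ is, term by term, twice the identity obtained by testing the equation with $u$ itself, minus the Poho\v{z}aev identity for the Schr\"{o}dinger--Poisson--Slater system. The paper's proof consists of exactly these two ingredients: it tests with $u$ to get \eqref{62}, quotes Ianni and Ruiz \cite[Proposition 2.5]{MR2902293} for the Poho\v{z}aev identity \eqref{63}, and then multiplies \eqref{62} by $2/3$, \eqref{63} by $1/3$, and subtracts --- which is your pairing divided by $3$. So the two arguments differ not in the identity being combined but in how the dilation step is justified.

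That difference is where your proposal has a genuine unexecuted step. The paper does not prove the Poho\v{z}aev identity; it outsources it to \cite[Proposition 2.5]{MR2902293}, where the regularity and decay theory for finite-energy solutions is actually carried out. Your justification --- ``the Coulomb potential $\frac{1}{4\pi|x|}\star u^2$ is bounded and decays, a bootstrap shows $u$ is smooth and decays, so the boundary terms vanish'' --- is a one-sentence sketch of precisely that nontrivial proposition. None of these facts is automatic for $u \in E_r(\R^3)$: membership in the Coulomb--Sobolev space only controls $\int\int u^2(x)\,u^2(y)/|x-y|\,dx\,dy$, so boundedness of the potential already requires work, and the cutoff boundary terms involving $x\cdot\nabla u$ are typically shown to vanish only along a well-chosen sequence of radii $R_j \to \infty$. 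To your credit, you correctly diagnose why the paper's Remark on $(H_3)$ cannot be invoked here (the map $t \mapsto u_t$ need not be differentiable in $E_r(\R^3)$, since $x\cdot\nabla u$ need not lie in the space) --- this is exactly why the paper proves $(H_3)$ through this lemma rather than through the Remark. But flagging the delicate point is not resolving it: as written, your proposal re-derives in sketch form the result the paper cites, so to be complete you must either carry out the regularity-and-decay argument in full or, as the paper does, simply invoke Ianni and Ruiz.
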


\begin{proof}
If $u \in E_r(\R^3)$ is a solution of the equation \eqref{300}, then testing it with $u$ itself gives
\begin{multline} \label{62}
\alpha \left[\int_{\R^3} |\nabla u|^2\, dx + \frac{1}{4 \pi} \int_{\R^3} \int_{\R^3} \frac{u^2(x)\, u^2(y)}{|x - y|}\, dx\, dy\right] = \beta \int_{\R^3} |u|^3\, dx\\[7.5pt]
+ \gamma \int_{\R^3} |u|^\sigma\, dx + \delta \int_{\R^3} |u|^\tau\, dx,
\end{multline}
and $u$ also satisfies the Poho\v{z}aev type identity
\begin{multline} \label{63}
\alpha \left[\frac{1}{2} \int_{\R^3} |\nabla u|^2\, dx + \frac{5}{16 \pi} \int_{\R^3} \int_{\R^3} \frac{u^2(x)\, u^2(y)}{|x - y|}\, dx\, dy\right] = \beta \int_{\R^3} |u|^3\, dx\\[7.5pt]
+ \frac{3 \gamma}{\sigma} \int_{\R^3} |u|^\sigma\, dx + \frac{3 \delta}{\tau} \int_{\R^3} |u|^\tau\, dx
\end{multline}
(see, e.g., Ianni and Ruiz \cite[Proposition 2.5]{MR2902293}). Multiplying \eqref{62} by $2/3$ and \eqref{63} by $1/3$ and subtracting gives the desired conclusion.
\end{proof}

{\bf Acknowledgement.}
This work was completed while the second author held a post-doctoral position at Florida Institute of Technology, Melbourne, United States of
America, supported  by CNPq/Brazil under Grant 201334/2024-0.

\def\cprime{$''$}


\begin{thebibliography}{10}

\bibitem{MR2569902}
Arnaud Anantharaman and Eric Canc\`es.
\newblock Existence of minimizers for {K}ohn-{S}ham models in quantum
  chemistry.
\newblock {\em Ann. Inst. H. Poincar\'{e} C Anal. Non Lin\'{e}aire},
  26(6):2425--2455, 2009.

\bibitem{MR3852465}
Jacopo Bellazzini, Marco Ghimenti, Carlo Mercuri, Vitaly Moroz, and Jean
  Van~Schaftingen.
\newblock Sharp {G}agliardo-{N}irenberg inequalities in fractional
  {C}oulomb-{S}obolev spaces.
\newblock {\em Trans. Amer. Math. Soc.}, 370(11):8285--8310, 2018.

\bibitem{DiPeSpVa2}
Serena Dipierro, Kanishka Perera, Caterina Sportelli, and Enrico Valdinoci.
\newblock An existence theory for nonlinear superposition operators of mixed
  fractional order.
\newblock Commun. Contemp. Math., to appear,
  \href{http://arxiv.org/abs/2310.02628}{\tt arXiv:2310.02628 [math.AP]}.

\bibitem{MR4736013}
Serena Dipierro, Kanishka Perera, Caterina Sportelli, and Enrico Valdinoci.
\newblock An existence theory for superposition operators of mixed order
  subject to jumping nonlinearities.
\newblock {\em Nonlinearity}, 37(5):Paper No. 055018, 27, 2024.

\bibitem{MR0478189}
Edward~R. Fadell and Paul~H. Rabinowitz.
\newblock Generalized cohomological index theories for {L}ie group actions with
  an application to bifurcation questions for {H}amiltonian systems.
\newblock {\em Invent. Math.}, 45(2):139--174, 1978.

\bibitem{MR3762278}
Rupert~L. Frank, Phan~Th\`anh Nam, and Hanne Van Den~Bosch.
\newblock The ionization conjecture in {T}homas-{F}ermi-{D}irac--von
  {W}eizs\"{a}cker theory.
\newblock {\em Comm. Pure Appl. Math.}, 71(3):577--614, 2018.

\bibitem{MR2902293}
Isabella Ianni and David Ruiz.
\newblock Ground and bound states for a static
  {S}chr\"odinger-{P}oisson-{S}later problem.
\newblock {\em Commun. Contemp. Math.}, 14(1):1250003, 22, 2012.

\bibitem{LeQuSi}
Leite, E.J.F., Ramos Quoirin, H. and Silva, K.
\newblock Some applications of the Nehari manifold method to functionals in $C^1(X\setminus\{0\})$.
\newblock {\em  Calc. Var.}, 64, 56 (2025).

\bibitem{MR2149087}
Claude Le~Bris and Pierre-Louis Lions.
\newblock From atoms to crystals: a mathematical journey.
\newblock {\em Bull. Amer. Math. Soc. (N.S.)}, 42(3):291--363, 2005.

\bibitem{MR629207}
Elliott~H. Lieb.
\newblock Thomas-{F}ermi and related theories of atoms and molecules.
\newblock {\em Rev. Modern Phys.}, 53(4):603--641, 1981.

\bibitem{MR641371}
Elliott~H. Lieb.
\newblock Erratum: ``{T}homas-{F}ermi and related theories of atoms and
  molecules''.
\newblock {\em Rev. Modern Phys.}, 54(1):311, 1982.

\bibitem{MR636734}
P.-L. Lions.
\newblock Some remarks on {H}artree equation.
\newblock {\em Nonlinear Anal.}, 5(11):1245--1256, 1981.

\bibitem{MR3251907}
Jianfeng Lu and Felix Otto.
\newblock Nonexistence of a minimizer for {T}homas-{F}ermi-{D}irac-von
  {W}eizs\"{a}cker model.
\newblock {\em Comm. Pure Appl. Math.}, 67(10):1605--1617, 2014.

\bibitem{MR3568051}
Carlo Mercuri, Vitaly Moroz, and Jean Van~Schaftingen.
\newblock Groundstates and radial solutions to nonlinear
  {S}chr\"odinger-{P}oisson-{S}later equations at the critical frequency.
\newblock {\em Calc. Var. Partial Differential Equations}, 55(6):Art. 146, 58,
  2016.

\bibitem{MePe2}
Carlo Mercuri and Kanishka Perera.
\newblock Variational methods for scaled functionals with applications to the
  schr\"{o}dinger--poisson--slater equation.
\newblock preprint, \href{http://arxiv.org/abs/2411.15887}{\tt
  arXiv:2411.15887}.

\bibitem{MR2640827}
Kanishka Perera, Ravi~P. Agarwal, and Donal O'Regan.
\newblock {\em Morse theoretic aspects of {$p$}-{L}aplacian type operators},
  volume 161 of {\em Mathematical Surveys and Monographs}.
\newblock American Mathematical Society, Providence, RI, 2010.

\bibitem{MR4736027}
Humberto Ramos~Quoirin, Gaetano Siciliano, and Kaye Silva.
\newblock Critical points with prescribed energy for a class of functionals
  depending on a parameter: existence, multiplicity and bifurcation results.
\newblock {\em Nonlinearity}, 37(6):Paper No. 065010, 40, 2024.

\bibitem{MR2679375}
David Ruiz.
\newblock On the {S}chr\"odinger-{P}oisson-{S}later system: behavior of
  minimizers, radial and nonradial cases.
\newblock {\em Arch. Ration. Mech. Anal.}, 198(1):349--368, 2010.

\bibitem{MR2768820}
Andrzej Szulkin and Tobias Weth.
\newblock The method of {N}ehari manifold.
\newblock In {\em Handbook of nonconvex analysis and applications}, pages
  597--632. Int. Press, Somerville, MA, 2010.

\end{thebibliography}
\end{document}